\theoremstyle{plain}  
\newtheorem{thm}{Theorem}[section] 
\newtheorem{lem}[thm]{Lemma} 
\newtheorem{prop}[thm]{Proposition}
\theoremstyle{plain} 
\newcommand{\thistheoremname}{}
\newtheorem{genericthm}[thm]{\thistheoremname}
\newtheorem*{genericthm*}{\thistheoremname}
\newenvironment{namedthm*}[1]
{\renewcommand{\thistheoremname}{#1}%
	\begin{genericthm*}}
	{\end{genericthm*}}
\theoremstyle{definition} 
\newtheorem{defn}{Definition}[section] 
\newtheorem{exmp}{Example}[section]
\newtheorem{assump}{Assumption}[section]
\theoremstyle{remark} 
\newtheorem{rem}{Remark}
\newcommand{\prob}{\mathbb{P}} 
\newcommand{\E}{\mathbb{E}} 
\renewcommand{\O}{\Omega} 
\newcommand{\EE}{\mathcal{E}} 
\newcommand{\F}{\mathcal{F}} 
\newcommand{\A}{\mathcal{A}} 
\newcommand{\M}{\mathcal{M}} 
\newcommand{\LL}{\mathscr{L}} 
\newcommand{\R}{\mathbb{R}} 
\newcommand{\N}{\mathbb{N}} 
\newcommand{\Q}{\mathbb{Q}} 
\newcommand{\diff}{\,\mathrm{d}} 
\newcommand{\one}{\mathbbm{1}} 
\newcommand{\s}{\sigma} 
\newcommand{\RR}{\mathfrak{R}} 
\newcommand{\BR}{\mathcal{B}(\mathbb{R})} 
\newcommand{\PR}{\mathcal{P}(\mathbb{R})} 
\newcommand{\HH}{\mathscr{H}} 
\newcommand{\GG}{\mathscr{G}} 
\newcommand{\st}{\leq_{st}} 
\newcommand{\X}{\boldsymbol{X}} 
\renewcommand{\a}{\boldsymbol{a}} 
\renewcommand{\b}{\boldsymbol{b}} 
\newcommand{\XX}{\mathcal{X}} 
\newcommand{\BX}{\mathcal{B}(\mathcal{X})} 
\newcommand{\Tau}{\mathcal{T}} 
\newcommand{\U}{\mathscr{U}} 
\newcommand{\OO}{\mathscr{O}} 
\newcommand{\bx}{\pmb{x}} 
\newcommand{\by}{\pmb{y}} 
\newcommand{\myS}{{\rm S}} 
\newcommand{\crps}{\textrm{CRPS}} 
\newcommand{\QS}{\textrm{QS}} 
\newcommand{\BS}{\textrm{BS}} 
\newcommand{\eqforall}{\quad \textrm{for all }}
\newcommand{\eqand}{\quad \textrm{and}\quad}
\newcommand\myrot[1]{\mathrel{\rotatebox[origin=c]{#1}{$\Rightarrow$}}}
\newcommand\SWarrow{\myrot{-135}}
\newcommand\SEarrow{\myrot{-45}}
\newcommand{\hsp}{\hspace{0.2mm}}
\newcommand{\Mid}{\hspace{1mm} \big| \hspace{1mm}}
\newcommand{\MMid}{\hspace{1mm} \Big| \hspace{1mm}}
\begin{document}
	\title{Isotonic conditional laws}
	\author{Sebastian Arnold and Johanna Ziegel\footnote{IMSV, University of Bern, \texttt{\{sebastian.arnold,johanna.ziegel\}@unibe.ch}}}
	\maketitle
 
 \begin{abstract}
We introduce isotonic conditional laws (ICL) which extend the classical notion of conditional laws by the additional requirement that there exists an isotonic relationship between the random variable of interest and the conditioning random object. We show existence and uniqueness of ICL building on conditional expectations given $\s$-lattices. ICL corresponds to a classical conditional law if and only if the latter is already isotonic. ICL is motivated from a statistical point of view by showing that ICL emerges equivalently as the minimizer of an expected score where the scoring rule may be taken from a large class comprising the continuous ranked probability score (CRPS). Furthermore, ICL is calibrated in the sense that it is invariant to certain conditioning operations, and the corresponding event probabilities and quantiles are simultaneously optimal with respect to all relevant scoring functions. We develop a new notion of general conditional functionals given $\s$-lattices which is of independent interest. 
 \end{abstract}

\emph{Keywords:} Conditional law, isotonicity, $\s$-lattices, conditional functional, calibration. 

\section{Introduction}\label{Introduction}
For a generic random vector $(Y,\xi)$ defined on some probability space $(\Omega, \F, \prob)$, the conditional law of $Y$ given $\xi$ is the distribution of $Y$ given ''all information about $\xi$". Typically, the information carried by $\xi$ is encoded by the $\s$-algebra generated by $\xi$. 
In some situations such as isotonic regression, it is reasonable to assume that there exists an \emph{isotonic} relationship between the random element $\xi$ and the random variable $Y$ in the sense that large (small) realizations of $\xi$ make it more likely that $Y$ will be large (small) as well. Evidently, in order to be able to speak about smaller and larger realizations, we have to assume that the random element $\xi$ takes values in some measurable space $\XX$ which is equipped with a partial order $\preceq$. Throughout the article, we assume that the quantity of main interest, $Y$, is a real-valued random variable. We show that there exists a unique \emph{isotonic conditional law (ICL) of $Y$ given $\xi$}, which takes the form of a Markov kernel and describes the best available explanation of the distribution of $Y$ given all information of $\xi$ under the assumption of isotonicity between $Y$ and $\xi$. 

While classical conditional distributions are constructed using conditional expectations given $\s$-algebras, our theory builds on conditional expectations given $\s$-lattices, which were first examined by \citet[1963, 1965]{Brunk_1961}. A family of sets $\A \subseteq \F$ is called a \emph{$\s$-lattice} if it contains $\Omega$, the empty set, and is closed under taking countable unions and intersections. In particular, any $\s$-algebra is a $\s$-lattice. In the Hilbert space of square integrable random variables, the family of all random variables which are measurable with respect to some $\s$-lattice $\A$ form a closed convex cone, and the conditional expectation of any random variable $X$ with respect to $\A$, for short $\E(X \mid \A)$, is defined as the orthogonal projection onto this closed convex cone \citep{Brunk_1965}. Conditional expectations given $\s$-lattices are strongly connected to isotonicity which will be illustrated in detail in Section \ref{Sec:Factorization_and_connection_to_classical_conditional_laws}. In particular, the conditional expectation factorizes with respect to an increasing measurable function, that is, there exists an increasing measurable function $f:\XX \to \R$ such that
\begin{equation*}
    \E(Y \mid \LL(\xi))=f(\xi) \quad \textrm{almost surely},
\end{equation*}
where $\LL(\xi)\subseteq \F$ denotes the \emph{$\s$-lattice generated by $\xi$}, see Section \ref{sec:generated_lattices}. This result is analogous to the well-known factorization result for classical conditional expectations given $\s$-algebras \citep[Lemma 11.7]{Bauer_MI}, with the difference that, additionally, $f$ has to be increasing. 
While the factorization result for conditional expectations given $\s$-lattices is already known in case of $\xi$ being a random variable, see e.g. \cite{Chu-in}, we extend the result to more general random elements in Section \ref{Subsec:Factorization}. Moreover, we will show that the isotonic conditional law of $Y$ given $\xi$ coincides with the conditional law of $Y$ given $\xi$ if and only if the latter is already isotonic in $\xi$.

For a random variable $Y$ and a $\s$-lattice $\A \subseteq \F$, we show in Theorem \ref{thm:universality_of_ICL} that the ICL $P_{Y \mid \A}$ minimizes the expected score $\E \myS (F,Y$) over all random distributions $F$ which are isotonic, that is, $\A$-measurable, and where the scoring rule $\myS$ may be chosen from a class of proper scoring rules including the prominent continuous ranked probability score (CRPS; \citealp{Matheson1976,Gneiting_Raftery_2007}). This universal minimizing property of ICL shows that ICL can be interpreted as the theoretically optimal prediction for the distribution of $Y$ given all information contained in $\A$, or, as the theoretically optimal prediction for the distribution of $Y$ given all information of $\xi$ under the assumption of isotonicity between $Y$ and $\xi$ if the $\s$-lattice $\A$ is generated by the random element $\xi$. Moreover, in Section \ref{Subsec:ICL_as_a_CRPS_minimizer}, we show that ICL is the population version of isotonic distributional regression (IDR) as proposed by \cite{IDR}, see also \citet{El-BarmiMukerjee2005,Mosching2022}. IDR is a powerful nonparametric distributional regression method under an isotonicity constraint that has found several applications in the statistical literature, see e.g.\ \cite{Schulz_Lerch_2022}, \cite{Alexander_Johanna_Biometrika}, \cite{DIM}. 

A probabilistic forecast for $Y$ is a (random) predictive distribution $F$ that aims to be as close as possible to the conditional law of $Y$ given the information at the time of forecasting. Calibration is a key requirement for probabilistic forecasts. Roughly speaking, probabilistic forecasts are \emph{calibrated} (or \emph{reliable}) if they are statistically compatible with the observations \citep{Gneiting_Katzfuss_2014}. Numerous different notions of calibration are found in the literature, see e.g.\ \cite{Dawid1984} and \cite{Diebold1998} for probabilistic calibration, \cite{Strahl_Ziegel_2017} and \cite{Arnold_Henzi_Ziegel} for conditional and sequential notions of calibration, \cite{Thorarinsdottir_et_al_2016} and \cite{Ziegel_Gneiting_2014} for multivariate notions of calibration, and \cite{Guo_et_al_2017} and \cite{Gupta_et_al_2022} for applications in machine learning. A recent extensive study of the logical dependencies between the main notions of calibration is given in \cite{Tilmann_Johannes_Calibration}. 


Analogously to IDR, one can show that ICL automatically has certain calibration properties. We introduce the new notion of \emph{isotonic calibration} which is satisfied by ICL, and show that isotonic calibration is weaker than auto-calibration and stronger than threshold- or quantile-calibration, see Section \ref{Sec:Calibration}. The calibration properties of ICL can alternatively be viewed as invariance properties of ICL under certain conditioning operations. Such invariance properties are well-known for classical conditional laws given $\sigma$-algebras.

IDR has the interesting property that it simultaneously fits optimal increasing quantile curves, and optimal increasing event probabilities. We show that the same property carries over to ICL, that is, for each threshold $z\in \R$, or quantile level $\alpha \in (0,1)$, the $\A$-measurable random variables $P_{Y \mid \A}(z)$ and $P_{Y \mid \A}^{-1}(\alpha)$ are simultaneously optimal with respect to all relevant scoring functions for the mean, or quantile functional, respectively, see Theorem \ref{thm:universality_of_ICL}. The difficulty is here that it is not obvious how one should naturally define conditional functionals, such as quantiles, given $\s$-lattices. 

\cite{Fissler_2022} recently studied measurability conditions for compositions of Markov kernels and statistical functionals. An alternative approach lies in studying conditional functionals directly. Whereas conditional expectations are broadly studied and developed, the theoretical literature about conditional quantiles is by not as extensive. For a random variable $Y$ and a $\s$-algebra $\mathcal{B}\subseteq \F$, \cite{Tomkins} defines the conditional median of $Y$ given $\mathcal{B}$ as any $\mathcal{B}$-measurable random variable $X$ which satisfies $\prob (Y>X\mid \mathcal{B})\leq 1/2 \leq \prob(Y \geq X \mid \mathcal{B})$. Clearly, this definition may be extended to any quantile 
but this approach does not readily generalize to $\s$-lattices. Instead, in Section \ref{Sec:Conditional_Functionals}, we define conditional functionals based on identification functions as minimizers of expected scoring functions building on the work of \cite{Optimal}. This approach is closely connected to the approach given by \cite{Brunk1970}, where the authors characterize conditional expectations as generalized Radon-Nikodym derivatives but do not apply their considerations to quantiles. We apply our results on conditional functionals given $\s$-lattices to show the universality properties of ICL but the proposed construction is of independent interest.

Except for Theorem \ref{thm:universality_of_ICL} and Proposition \ref{prop:ICL_is_quantile_calibrated}, all proofs are collected in the appendix.

\section{Existence and uniqueness of ICL}\label{Sec:Existence_and_Uniqueness}
Throughout the article, we fix a probability space $(\O, \F, \prob)$. Let $\A\subseteq \F$ be a \emph{$\s$-lattice}, that is $\A$ is closed under countable intersections and unions and $\emptyset, \Omega \in \A$. A random variable $X$ is called \emph{$\A$-measurable} if $\{X>x\} \in \A$ for all $x \in \R$. Denote by $\overline{\A}$ the family of all complements of elements in $\A$. It is straightforward to verify that $\overline{\A}$ is also a $\s$-lattice. Let $L_1$ be the space of all integrable random variables, $L_1(\A)$ the family of all $\A$-measurable integrable random variables, and $\BR$ the Borel $\s$-algebra on $\R$. For a random variable $X$, we denote by $\s(X)$ the $\s$-algebra generated by $X$, i.e.~$\s(X) = \{X^{-1}(B) \mid B\in \BR\}$. Following \cite{Brunk_1963}, we define:
    \begin{defn}\label{defn:cond_expectations}
        Let $\A \subseteq \F$ be a $\s$-lattice. For any $Y\in L_1$, we call $X\in L_1(\A)$ a \emph{conditional expectation of $Y$ given $\A$}, for short $\E(Y \mid \A)$, if it satisfies 
		\begin{equation} \label{eq:defn_cond_expectations1}
			\E(Y\one_A)\leq \E(X\one_A)\eqforall A \in \A 
		\end{equation}
		and
    \begin{equation}\label{eq:defn_cond_expectations2}
			\E(X \one_B) = \E(Y \one_B) \eqforall B \in \s(X).
		\end{equation}
    \end{defn}

\cite{Brunk_1963} shows existence and uniqueness of conditional expectations given $\s$-lattices (see Theorems 1 and 2 therein). Hence, we will denote by $\E(Y\mid \A)$ any $\A$-measurable random variable $X$ that satisfies \eqref{eq:defn_cond_expectations1} and \eqref{eq:defn_cond_expectations2} and know that $X=\Tilde{X}$ almost surely for any other random variable $\tilde{X}$ with these properties. We call a conditional expectation given a $\s$-lattice sometimes simply \emph{isotonic} conditional expectation and abbreviate $\E(\one_{B} \mid \A)$ by $\prob(B \mid \A)$ for $B \in \F$.

\begin{exmp}
This example shows that the isotonic conditional expectation of a random variable on a finite and totally ordered probability space coincides with the solution to the isotonic regression problem for the mean functional (\citealp{Ayer1955}; \citealp{4B}).

Let $\Omega=\{1,2,3\}$ be endowed with the power set $\F=2^\Omega$ and $\prob(A)=\#A/3$ for $A\in \F$. Consider the $\s$-lattice $\A=\{\emptyset, \Omega, \{2,3\}, \{3\}\}$ and the random variable $Y(i)=y_i$, for $i=1,2,3$ with $y_2<y_1<y_3$. Then, the conditional expectation $X=\E(Y\mid \A)$ is given by $X(1)=X(2)=(y_1+y_2)/2$ and $X(3)=y_3$.
\end{exmp}

Some properties of classical conditional expectations do not hold any longer if we consider conditional expectations given $\s$-lattices rather than  $\s$-algebras. In particular, we do not have linearity, the tower property fails to hold and $\A$-measurable random variables can not be pulled out of the conditional expectation. Explicit counterexamples can be found e.g.\ in \cite{Chu-in} or \citet{Masterthesis}. For a comprehensive study of sufficient and necessary conditions for the conditional expectation to be linear see \cite{Kuenzi}. \cite{Brunk_1965} shows that $\E(X\mid \A)$ is square integrable for any square integrable random variable $X$ and defines $\E(X\mid \A)$ in this case as the projection of $X$ onto the closed convex cone of $\A$-measurable random variables in the space $L_2$ of square integrable random variables with the inner product $\langle W, Z \rangle = \E(WZ)$ for $W,Z \in L_2$. For square integrable random variables, there exists a nice geometrical interpretation why linearity and the tower property are violated as illustrated in Figure \ref{fig1}. Moreover, the interpretation of isotonic conditional expectations as projections also illustrates why we do not require equality in \eqref{eq:defn_cond_expectations1} but rather only impose that $\E((Y-X)Z)\leq 0$ for all $Z\in L_2(\A)$ (which is equivalent to condition \eqref{eq:defn_cond_expectations1}): Whereas $(Y-X)$ and $Z$ are orthogonal for any $\A$-measurable random variable $Z$ for classical conditional expectations (i.e. $\A$ is a $\s$-algebra), the angle between $(Y-X)$ and $Z$ only has to be obtuse for any $Z\in L_2(\A)$ in the case of $\A$ being a $\s$-lattice.
	
\begin{figure}
	\includegraphics[width=0.49\textwidth]{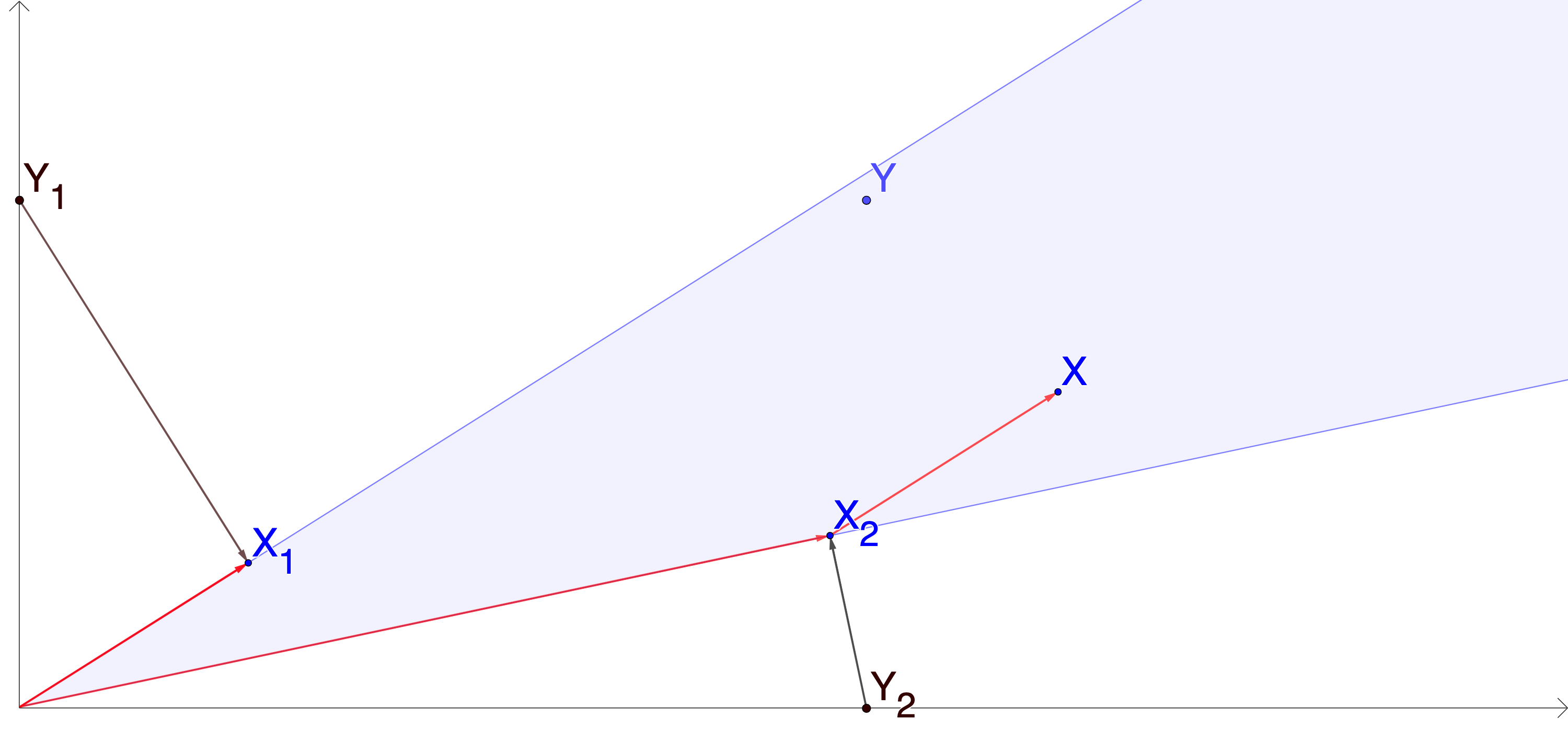}
	\includegraphics[width=0.49\textwidth]{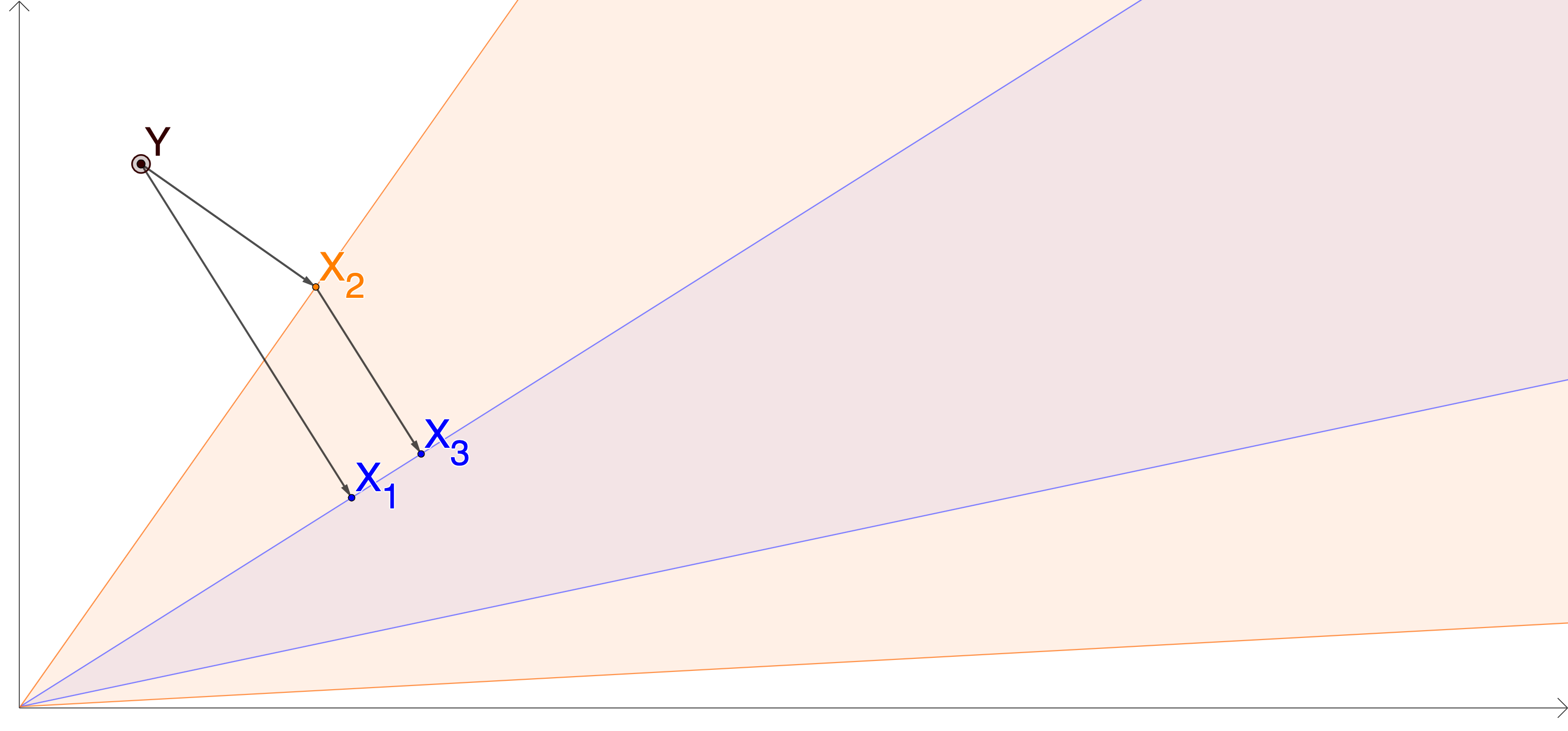}
	\caption{Linearity and the tower property are violated for conditional expectations given $\s$-lattices. Left panel: The blue cone indicates the family $L_2(\A)$ for $\s$-lattice $\A\subseteq \F$. Here, $\E(Y_1 \mid \A)+ \E(Y_2 \mid \A) =X \neq Y=\E(Y_1+Y_2 \mid \A)$. Right panel: For $\s$-lattices $\A_1\subseteq A_2 \subseteq \F$, the families $L_2(\A_1)$ and $L_2(\A_2)$ are denoted by the blue and orange cone, respectively. Clearly, $\E(\E(Y \mid \A_2) \mid \A_1) =X_3\neq X_1=\E(Y \mid \A_1)$.}\label{fig1}
\end{figure}

In this article, we study isotonic conditional laws, that is conditional laws given $\s$-lattices. Our first main result is the following theorem, which guarantees existence and uniqueness. Recall that a \emph{Markov kernel} (also called a  \emph{stochastic kernel} or a \emph{probability kernel}) from $(\Omega, \F)$ to $(\R, \BR)$ is a map $k: \Omega \times \BR \to [0,1]$ such that
 \begin{enumerate}
     \item [(i)] for each $\omega \in \Omega$, the map $B\mapsto k(\omega, B)$, $B\in \BR$, is a probability measure on $(\R, \BR)$,
     \item [(ii)] for each $B \in \BR$, the map $\omega \mapsto k(\omega,B)$, $\omega \in \Omega$, is $\F$-$\BR$-measurable.  
 \end{enumerate}
	
\begin{thm}[Existence and uniqueness of ICL] \label{thm:existence_and_uniqueness_ICL}
Let $Y$ be a random variable and $\A \subseteq \F$ be a $\s$-lattice. Then there exists a conditional law of $Y$ given $\A$, denoted by $P_{Y\mid \A}$.  More precisely, there exists a Markov kernel $P_{Y \mid \A}$ from $(\Omega, \F)$ into $(\R, \BR)$ such that $\omega \mapsto P_{Y \mid \A}(\omega,(y, \infty))$ is a version of $\prob(Y>y \mid \A)$ for any $y \in \R$. For any other Markov kernel $Q$ with this property, it holds that $P_{Y\mid \A}(\cdot,B)=Q(\cdot,B)$ $\prob$-almost surely for all $B \in \BR$.
\end{thm}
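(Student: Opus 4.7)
The plan is to mimic the classical construction of a regular conditional distribution, replacing $\s$-algebras by $\s$-lattices throughout. The overall strategy has two parts: building the candidate survival function $\omega\mapsto P_{Y\mid\A}(\omega,(y,\infty))$ from a countable family of versions $\prob(Y>q\mid\A)$ at rational thresholds $q\in\Q$, and then extending this data to a full Markov kernel.

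First I would list the properties of Brunk's $\s$-lattice conditional expectation that I need beyond what is stated in Definition~\ref{defn:cond_expectations}: (i) monotonicity, i.e.\ $X_1\leq X_2$ a.s.\ implies $\E(X_1\mid\A)\leq\E(X_2\mid\A)$ a.s.; (ii) $\E(c\mid\A)=c$ for constants $c$, which is immediate from the definition since $\{c>x\}\in\{\emptyset,\Omega\}\subseteq\A\cap\overline{\A}$; and (iii) a monotone convergence statement for uniformly bounded increasing sequences of indicators. None of these rely on the (failed) linearity of $\E(\cdot\mid\A)$; they are order-theoretic or projective properties available in Brunk's work.

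With these in hand, fix a version $F_q:=\prob(Y>q\mid\A)$ for each $q\in\Q$. Monotonicity gives $F_{q_1}\geq F_{q_2}$ outside a null set whenever $q_1<q_2$, and monotone convergence applied to $\one_{\{Y>q\}}$ as $q\to\pm\infty$ along rationals yields $F_q\to 0$ and $F_q\to 1$ outside null sets. Collecting the countably many exceptional nulls into a single set $N$, I would, for $\omega\notin N$, define $H(\omega,y):=\inf_{q\in\Q,\,q>y}F_q(\omega)$, which is then a right-continuous non-increasing function of $y$ with limits $1$ at $-\infty$ and $0$ at $+\infty$; on $N$ I set it equal to any fixed survival function. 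A standard Carath\'eodory extension then yields a probability measure $P_{Y\mid\A}(\omega,\cdot)$ on $(\R,\BR)$ with $P_{Y\mid\A}(\omega,(y,\infty))=H(\omega,y)$, and measurability of $\omega\mapsto P_{Y\mid\A}(\omega,B)$ for each $B\in\BR$ follows from a Dynkin class argument starting from the observation that $H(\cdot,y)$ is a countable infimum of $\A$-measurable, hence $\F$-measurable, functions.

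To identify $H(\cdot,y)$ with $\prob(Y>y\mid\A)$ at every real (not only rational) $y$, I would invoke monotone convergence once more: for rational $q_n\downarrow y$, $\one_{\{Y>q_n\}}\uparrow\one_{\{Y>y\}}$, so $F_{q_n}\uparrow\prob(Y>y\mid\A)$ a.s., which coincides with $H(\cdot,y)$ by construction. Uniqueness is then routine: if $Q$ is another such kernel, equality with $P_{Y\mid\A}$ on each half-line $(q,\infty)$, $q\in\Q$, holds outside a common null set, and right-continuity then forces almost-sure agreement on all half-lines; the $\pi$-$\lambda$ theorem extends this to all Borel sets. The main obstacle I expect is cleanly securing the monotone convergence statement for $\E(\cdot\mid\A)$ without appealing to linearity or the tower property; once that is in hand, the rest of the argument is essentially a transcription of the classical construction.
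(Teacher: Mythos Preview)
Your proposal is correct and follows essentially the same approach as the paper's own sketch: fix versions of $\prob(Y>q\mid\A)$ at rational thresholds, use monotonicity and a suitable convergence property of $\s$-lattice conditional expectations to assemble a bona fide survival function off a single null set, extend via Carath\'eodory, verify measurability by a Dynkin argument, and deduce uniqueness from agreement on a $\pi$-system. The only presentational difference is that the paper works through the ring $\RR$ of finite unions of rational half-open intervals whereas you work directly with the survival function; these are equivalent, and you have correctly identified the one nontrivial technical point (monotone convergence for $\E(\cdot\mid\A)$ without linearity), which is indeed available via the $L^2$ projection characterization since projections onto closed convex cones are non-expansive.
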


We refer to the conditional law of $Y$ given $\A$ as the \emph{isotonic conditional law (ICL) of $Y$ given $\A$}. By establishing the notion of a $\s$-lattice that is generated by some random element $\xi$ it becomes clear why we refer to ICL as the \emph{isotonic} conditional law of $Y$ given $\xi$. Theorem \ref{thm:existence_and_uniqueness_ICL} is analogous to the well-known result about existence and uniqueness of conditional laws given $\s$-algebras and is shown in a similar way, see e.g.~\citet[Theorem 4.33]{Bauer_Prob}. For this reason we only give a sketch of proof in Appendix \ref{appendix:proof_of_existence_and_uniqueness_of_ICL} and refer to the Master's thesis by \citet{Masterthesis} for a detailed proof. 

\begin{rem}
    We only require $P_{Y \mid \A}(\cdot, (a,\infty))=\prob(Y>a\mid \A)$ for all $a\in \R$ and not $P_{Y \mid \A}(\cdot, B)=\prob(Y\in B\mid \A)$ for any $B \in \BR$ for the following reason: For any $B\in \BR$, we have $ \prob(Y\in B^c \mid \A)= 1-\prob(Y\in B \mid \overline{\A})$ by the general properties of isotonic conditional expectations, see e.g.\ \cite{Brunk_1963}. If we imposed the stronger condition on $P_{Y \mid \A}$, then also $ \prob(Y\in B^c \mid \A)= 1-\prob(Y\in B \mid \A)$ since $P_{Y\mid \A}(\cdot, B)=1-P_{Y\mid \A}(\cdot, B^c)$ almost surely for any $B\in \BR$. This would imply $\prob(Y\in B \mid \A)=\prob(Y\in B \mid \overline{\A})$ for all $B\in \BR$ which holds if and only if $\A$ is a $\s$-algebra as explained by \cite{Masterthesis}.
 \end{rem}


\section{Factorization and connection to classical conditional laws}\label{Sec:Factorization_and_connection_to_classical_conditional_laws}

\subsection{Generated $\s$-lattices and measurability for random elements}\label{sec:generated_lattices}
We develop a notion of generated $\s$-lattices for random elements. 
Our primary interest lies in random vectors $\X\in \R^p$ and random cdfs $F\in \PR$, but we will study more general random elements in the next section and illustrate in Examples \ref{ex:Euclidean_space} and \ref{ex:The_space_PR} the connection to these two special cases. 

Here, $\PR$ denotes the set of all probability measures on $\R$. We treat elements of $\PR$ interchangeably as probability measures or cumulative distribution functions (cdfs) and let $P$ and $Q$ be generic distributions and $F$ and $G$ be generic cdfs. For a cdf $F$, we denote the corresponding survival function (sf) by $\Bar{F}$, i.e.\ $\Bar{F}(x)=1-F(x)$ for $x\in \R$.

For a nonempty set $\XX$ equipped with a partial order $\preceq$ and a metric $d$, a triple $(\XX, d, \preceq)$ is called an \emph{ordered metric space}, see \citet[Chapter 10]{Richmond} for a comprehensive study of ordered metric spaces. 
Let $(\XX, d, \preceq)$ be an ordered metric space, and denote the power set of $\XX$ by $2^\XX$. 
An arbitrary intersection of $\s$-lattices is again a $\s$-lattice. Hence, for any family of sets $\EE\subseteq 2^\XX$, we can define the \emph{$\s$-lattice generated by $\mathcal{E}$} as the smallest $\s$-lattice that contains $\mathcal{E}$ and denote it by $\LL(\mathcal{E})$. Let $\BX$ denote the Borel $\s$-algebra on $\XX$, that is the $\s$-algebra generated by all open sets and call $B\subseteq \XX$ an \emph{upper set} if $x \in B$ and $x \preceq y$ implies $y \in B$. We denote by $\U$ the family of all upper sets and leave it to the reader to show that $\U$ is a $\s$-lattice. 

\begin{exmp}[Euclidean space]\label{ex:Euclidean_space}
    Consider $\XX=\R^p$ with the usual \emph{componentwise order} $\a \leq \b$ if $a_i \leq b_i$ for all $1\leq i \leq p$ and the \emph{Euclidean metric} $d(\a, \b)= \lVert \a - \b \rVert$ for $\a=(a_i)_{i=1}^p, \b=(b_i)_{i=1}^p \in \R^p$.
\end{exmp}
\begin{exmp}[The space $\PR$]\label{ex:The_space_PR}  We equip $\PR$ with the stochastic order and the topology of weak convergence. Recall that for $F, G \in \PR$ we say that \emph{$F$ is stochastically smaller or equal than $G$}, for short $F \st G$, if $F(x)\geq G(x)$ for all $x \in \R$. Recall further that the topology of weak convergence can be metrisized by the \emph{L\'evy metric}
\begin{equation*}
d(F,G)= \inf \hsp \big\{\epsilon > 0 \mid G(x-\epsilon)-\epsilon \leq F(x) \leq G(x+\epsilon)+\epsilon \textrm{ for all }x \in \R\big\}, \quad F,G \in \PR.
\end{equation*}
\end{exmp}

Let $(\XX', d', \preceq')$ be a further nonempty ordered metric space and let $\M \subseteq 2^\XX, \M' \subseteq 2^{\XX'}$ be $\s$-lattices. We call a function $f: \XX \to \XX'$ \emph{isotonic} if $x \preceq y$ implies $f(x) \preceq' f(y)$, and $\M-\M'$-\emph{measurable} if $f^{-1}(B) \in \M$ for all $B\in \M'$. It is straightforward to check that $f: \XX \to \XX'$ is increasing if and only if it is $\U-\U'$-measurable, where $\U$ and $\U'$ denote the family of all upper sets in $\XX$ and $\XX'$ respectively. This key observation explains the close relation between $\s$-lattices and isotonicity. 
Since $f: \XX \to \R$ is Borel measurable and increasing if and only if $f^{-1}(B) \in \BX$ for all $B \in \BR$ and $\{f > a\} \in \U$ for all $a \in \R$,  we propose the following definition.
 
	\begin{defn} \label{def:measurability_and_generated_sigma_lattices_for_random_elements} Let $\xi$ be a random element in $(\XX, d, \preceq)$ and let $\A \subseteq \F$ be a $\s$-lattice. 
		\begin{enumerate}
			\item $\xi$ is called \emph{$\A$-measurable} if $\xi^{-1}(B) \in \A$ for all $B \in \BX \cap \U$. 
			\item We define the \emph{$\s$-lattice generated by $\xi$} as the family of all preimages of measurable upper sets in $\XX$ under $\xi$ and denote it by $\LL(\xi)$, that is
			\begin{equation*}
				\LL(\xi)= \big\{\xi^{-1}(B)\mid B \in \BX \cap \U \big\}.
			\end{equation*}
		\end{enumerate}
	\end{defn}
Clearly, $\BX \cap \U$ as the intersection of a $\s$-algebra with a $\s$-lattice is itself a $\s$-lattice. Thus saying that a random element $\xi$ is $\A$-measurable is short for saying that $\xi$ is $\A-(\BX \cap \U)$-measurable.
Moreover, $\xi$ is $\A$-measurable if and only if $\LL(\xi) \subseteq \A$.

Definition \ref{def:measurability_and_generated_sigma_lattices_for_random_elements} is consistent with the classical definition, where we call a random variable $X$ $\A$-measurable if $\{X>a\}\in \A$ for all $a \in \R$. As it is usual convention to call a random variable $X$ measurable with respect to the $\s$-algebra $\F$ if $X$ is $\F-\BR$-measurable, we say that $X$ is $\A$-measurable to abbreviate that $X$ is $\A-\A_\R$-measurable, where $\A_\R$ denotes the standard lattice on $\R$, that is the $\s$-lattice generated by all open upper sets in $\R$, i.e.\ $\A_\R= \LL(\EE)$ for $\EE= \{(a, \infty)\mid a \in \R\}$. Since any upper set in $\R$ is of the form $(a,\infty)$ or $[a,\infty)$ for $a\in \R$, it follows directly that $\A_\R$ is equal to the $\s$-lattice of all measurable upper sets in $\R$. Finally, it may be verified that the random variable $X$ is $\A_\R$-measurable if and only if $X^{-1}(B)\in \A$ for any $B\in \EE$. The smallest $\s$-lattice $\mathcal{M} \subseteq \F$ such that $X$ is $\mathcal{M}$-measurable is $\LL(X)$, see for example \citet[p.\ 115]{Chu-in}. Equivalently, it is the smallest $\s$-lattice containing $\{X>a\}$ for all $a \in \R$, and it is not difficult to show that $\LL(X)$ basically only consists of its generator, that is,
\begin{equation}\label{eq:generated_sigma_algebra_for_random_variables}
    \LL(X) =  \big\{\{X>a\} \mid a \in \R \big\} \cup  \big\{\{X\geq a\} \mid a \in \R \big\}
\end{equation}
\citep[Lemma 5.2]{Masterthesis}. 

Under some regularity conditions on $(\XX, d, \preceq)$, which are explained in Appendix \ref{appendix:ordered_metric_spaces}, we can show the following result.
	
	\begin{lem}\label{lem:sigma_algebras_generated_by_generated_sigma_lattice}
		Consider a random element $\xi$ in an ordered metric space $(\XX, d, \preceq)$, which satisfies Assumption \ref{assump:ordered_metric_space}. Then
		\begin{equation*}
			\s(\xi)= \s\big(\LL(\xi)\big).
		\end{equation*}
		In particular any $\LL(\xi)$-measurable random variable $Z$ is $\s(\xi)$-measurable.
	\end{lem}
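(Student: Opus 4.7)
The plan has two stages: reduce the lemma to a purely topological/order-theoretic statement about $\XX$, and then discharge that statement using Assumption~\ref{assump:ordered_metric_space}.

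For the first stage, I would handle the easy inclusion $\s(\LL(\xi))\subseteq \s(\xi)$ immediately: every generator of $\LL(\xi)$ is, by definition, the preimage under $\xi$ of an element of $\BX$, hence lies in $\s(\xi)$, and $\s$-algebras are monotone. For the reverse inclusion I would use the standard transport lemma that pullbacks commute with generation of $\s$-algebras: for any family $\mathcal{G}\subseteq 2^\XX$,
\begin{equation*}
    \s\bigl(\{\xi^{-1}(B) : B\in \mathcal{G}\}\bigr) \;=\; \bigl\{\xi^{-1}(B) : B\in \s(\mathcal{G})\bigr\}.
\end{equation*}
Applied to $\mathcal{G}=\BX\cap \U$, this reduces the problem to showing $\s(\BX\cap \U)=\BX$, because then $\s(\LL(\xi))=\{\xi^{-1}(B):B\in \BX\}=\s(\xi)$. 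The final ``in particular'' statement about $\LL(\xi)$-measurable random variables is then routine, using the characterization that $Z$ is $\LL(\xi)$-measurable iff $\{Z>a\}\in \LL(\xi)\subseteq \s(\xi)$ for every $a\in\R$.

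For the second stage, the task is to prove $\BX\subseteq \s(\BX\cap \U)$; the reverse inclusion is immediate. Note first that the complement of any upper set is a lower set, so $\s(\BX\cap \U)$ also contains every measurable lower set. Thus it suffices to show that every open $U\subseteq \XX$ is a countable Boolean combination of open upper and open lower sets. This is where Assumption~\ref{assump:ordered_metric_space} on $(\XX,d,\preceq)$ enters: I would assume (as is true in Examples~\ref{ex:Euclidean_space} and~\ref{ex:The_space_PR}) that $\XX$ is second countable and that the open upper sets together with the open lower sets form a subbasis for the topology. Under this assumption, one picks a countable basis consisting of finite intersections of open upper and open lower sets, writes any open set as a countable union from this basis, and concludes that open sets lie in $\s(\BX\cap \U)$, whence so do all Borel sets by a $\pi$-$\lambda$ or monotone class argument.

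The main obstacle is the second stage. The reduction is formal, but proving $\BX=\s(\BX\cap\U)$ genuinely needs the topology and the order on $\XX$ to be compatible; without some regularity there are easy counterexamples (e.g., an antichain ordering makes $\U=\{\emptyset,\XX\}$). I expect Assumption~\ref{assump:ordered_metric_space} to supply exactly what is needed: second countability plus the property that for every pair of points $x\not\preceq y$ one can separate them by disjoint open upper/lower neighborhoods, which is precisely enough to ensure that open upper and open lower sets form a subbasis. Verifying this in Examples~\ref{ex:Euclidean_space} and~\ref{ex:The_space_PR} is straightforward (open rectangles in $\R^p$ are intersections of open upper and open lower half-spaces; for $\PR$ in the L\'evy metric, $\ph$-functional balls can be expressed via stochastic-order half-spaces using suitable continuous bounded increasing $\ph$).
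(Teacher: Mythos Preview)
Your first stage matches the paper exactly: the paper also reduces to $\s(\BX\cap\U)=\BX$ via the transport identity $\s(\{\xi^{-1}(B):B\in\EE\})=\{\xi^{-1}(B):B\in\s(\EE)\}$ and then deduces the ``in particular'' clause by $\s(Z)=\s(\LL(Z))\subseteq\s(\LL(\xi))=\s(\xi)$.

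The gap is in your second stage. You guess that Assumption~\ref{assump:ordered_metric_space} asserts second countability plus that open upper and open lower sets form a subbasis (equivalently, a $T_2$-type order separation). That is not what the assumption says. The actual content is: $(\XX,d,\preceq)$ is separable, $T_1$-ordered (so $\uparrow x$ and $\downarrow x$ are closed), has a \emph{convex} topology (a basis of order-convex sets, i.e.\ sets $C$ with $a,c\in C,\ a\preceq b\preceq c\Rightarrow b\in C$), and every point admits strictly increasing and strictly decreasing sequences converging to it. Under this, your subbasis claim is not available: an open convex $C$ does equal $(\uparrow C)\cap(\downarrow C)$, but $\uparrow C$ and $\downarrow C$ need not be open, and need not even be Borel, so you cannot place $C$ into $\s(\BX\cap\U)$ that way.

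The paper's route is different and uses the assumption as stated. It shows that the closed order intervals $[x,y]=\uparrow x\cap\downarrow y$ generate $\BX$: given an open $U$ and $x\in U$, pick a convex neighbourhood $C_x\subseteq U$, use the monotone-approximation property to find $x'\preceq x\preceq \tilde{x}$ with $x',\tilde{x}\in C_x$, so that $[x',\tilde{x}]\subseteq C_x\subseteq U$ by convexity; then separability yields a countable subcover of $U$ by such intervals. Finally, $[x,y]\in\s(\BX\cap\U)$ because $\uparrow x$ is a closed (hence Borel) upper set and $\downarrow y$ is the complement of the open upper set $\XX\setminus\downarrow y$. This is where $T_1$-orderedness is used, not the stronger separation you conjectured. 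Your sketch for $\PR$ via ``$\ph$-functional balls expressed as stochastic-order half-spaces'' is accordingly off target; the paper handles $\PR$ through the same interval argument, relying only on the fact that the L\'evy topology has a convex basis.
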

The ordered metric spaces $\R^p$ and $\PR$ given in Examples \ref{ex:Euclidean_space} and \ref{ex:The_space_PR}, respectively, satisfy Assumption \ref{assump:ordered_metric_space}. Therefore, Lemma \ref{lem:sigma_algebras_generated_by_generated_sigma_lattice} implies that $\s(\X)=\s(\LL(\X))$ for any random vector $\X\in \R^p$ and $\s(F)=\s(\LL(F))$ for any random cdf $F\in \PR$.

\subsection{Factorization and connection to the classical conditional law}\label{Subsec:Factorization}

Consider two random variables $X$ and $Y$. 
The classical factorization result states that a random variable $Y$ is $\s(X)$-measurable if and only if there exists a measurable function $f:\R\to \R$ such that $Y=f(X)$ \citep[Lemma 11.7]{Bauer_MI}. 
If we consider $\s$-lattices instead of $\s$-algebras, the factorization result remains valid with the additional requirement that the factorizing function $f$ needs to be isotonic \citep{Chu-in}. 
We generalize this result to random elements $\xi$ with values in an ordered metric space.

 \begin{prop} \label{prop:factorization_for_random_elements}
		Let $\xi$ be a random element in some ordered metric space $(\XX,d,\preceq)$ and let $Z$ be a random variable. Then $Z$ is $\LL(\xi)$-measurable if and only if there exists an increasing Borel measurable function  $f: \XX \to \R$ such that $Z=f(\xi)$. In particular, for any integrable random variable $Y$, there exists an increasing Borel measurable function $f: \XX \to \R$ such that
		\begin{equation*}
			\E( Y \hsp | \hsp \LL(\xi))= f(\xi).
		\end{equation*}
	\end{prop}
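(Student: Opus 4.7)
The plan is to prove the equivalence in the two directions separately, and then deduce the last claim by specialising to $Z = \E(Y \mid \LL(\xi))$.

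For the direction $(\Leftarrow)$, I would suppose $Z = f(\xi)$ with $f:\XX \to \R$ increasing and Borel measurable. For any $a \in \R$ the superlevel set $\{f > a\}$ is Borel by measurability of $f$ and an upper set by isotonicity, hence lies in $\BX \cap \U$; therefore $\{Z > a\} = \xi^{-1}(\{f > a\}) \in \LL(\xi)$, which shows that $Z$ is $\LL(\xi)$-measurable.

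For the direction $(\Rightarrow)$, I assume $Z$ is $\LL(\xi)$-measurable, so for each rational $q$ there is some $B_q \in \BX \cap \U$ with $\xi^{-1}(B_q) = \{Z > q\}$. I would first replace $B_q$ by
\begin{equation*}
\hat B_q = \bigcup_{q' \in \Q,\, q' \geq q} B_{q'},
\end{equation*}
which lies in $\BX \cap \U$ as a countable union of measurable upper sets, is monotonically decreasing in $q$, and still satisfies $\xi^{-1}(\hat B_q) = \{Z > q\}$ because $\{Z > q'\} \subseteq \{Z > q\}$ whenever $q' \geq q$. I would then set $f(x) = \sup\{q \in \Q : x \in \hat B_q\}$ with the convention $\sup \emptyset = -\infty$. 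Isotonicity of $f$ follows from upper-ness of the $\hat B_q$, since $x \preceq y$ only enlarges the indexing set of the sup; Borel measurability follows from the identity $\{f > a\} = \bigcup_{q \in \Q,\, q > a} \hat B_q$, a countable union of Borel sets; and for every $\omega \in \Omega$,
\begin{equation*}
f(\xi(\omega)) = \sup\{q \in \Q : \xi(\omega) \in \hat B_q\} = \sup\{q \in \Q : Z(\omega) > q\} = Z(\omega).
\end{equation*}

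The principal obstacle is ensuring that $f$ takes values in $\R$ rather than $\bar\R$: a priori the Borel sets $\{f = +\infty\} = \bigcap_q \hat B_q$ and $\{f = -\infty\} = \XX \setminus \bigcup_q \hat B_q$ may be nonempty, although both are disjoint from $\xi(\Omega)$ because $Z$ is finite. To circumvent this I would apply the construction above not to $Z$ but to the bounded isotonic transform $\varphi(Z) \in (-1,1)$, where $\varphi : \R \to (-1,1)$ is a fixed strictly increasing Borel bijection; boundedness of $\varphi(Z)$ allows the choice $\hat B_c = \XX$ for $c \leq -1$ and $\hat B_c = \emptyset$ for $c \geq 1$, which confines the resulting sup to $[-1,1]$ on all of $\XX$, and then composing with $\varphi^{-1}$ on the Borel set where the sup lies in $(-1,1)$, which contains $\xi(\Omega)$, produces the desired increasing Borel $f : \XX \to \R$ with $f(\xi) = Z$. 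The ``in particular'' assertion then follows immediately by applying the equivalence to the $\LL(\xi)$-measurable random variable $Z = \E(Y \mid \LL(\xi))$ (which is $\LL(\xi)$-measurable by Definition \ref{defn:cond_expectations}).
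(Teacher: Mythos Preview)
Your $(\Leftarrow)$ direction and the ``in particular'' clause match the paper. For $(\Rightarrow)$ you take a genuinely different route: the paper argues by measure-theoretic induction (indicators $\to$ simple $\to$ nonnegative via monotone limits $\to$ general via $Z=Z_+-Z_-$, with $Z_-$ handled through $\overline{\LL(\xi)}$), whereas you build $f$ in one stroke as a supremum over a monotonised family of measurable upper sets. Your construction is more direct and makes the structure of $f$ explicit; the paper's approach follows a familiar template but leans on external lemmas for the limit step.

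There is, however, a real gap in your treatment of the $\pm\infty$ obstruction. After passing to $\varphi(Z)$ you obtain an increasing Borel $g:\XX\to[-1,1]$ with $g(\xi)=\varphi(Z)$, but composing with $\varphi^{-1}$ only defines $f$ on $\{g\in(-1,1)\}$; you do not say how to extend to $\{g=\pm 1\}$, and in general no increasing \emph{real-valued} extension exists. If $g$ approaches $1$ along $\xi(\Omega)$ then $\varphi^{-1}\circ g$ is unbounded there, so any value assigned at a point with $g=1$ would have to dominate $+\infty$. Concretely, take $\Omega=(0,1)$, $\XX=\R$, $\xi$ the inclusion, $Z(\omega)=\tan(\pi(\omega-\tfrac12))$: then $Z$ is $\LL(\xi)$-measurable, yet no increasing $f:\R\to\R$ can agree with $Z$ on $(0,1)$ because $f(1)$ would have to exceed $\sup_{(0,1)}Z=+\infty$. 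This shows the proposition is slightly too strong as stated; both your construction and the paper's (where the pointwise limit of the $f_n$ need not be finite off $\xi(\Omega)$) naturally produce $f:\XX\to\bar\R$ that is finite on $\xi(\Omega)$. Since only $f\circ\xi$ ever matters, this is harmless for the applications, but your claimed fix does not actually deliver a globally increasing $\R$-valued $f$.
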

	
Let $Y$ be a random variable and $\xi \in \XX$ be a random element with distribution $P_\xi$. Recall that the conditional law of $Y$ given $\xi$, which we denote by $P_{Y \mid \xi}$, is a Markov kernel from $(\Omega, \F)$ to $(\R, \BR)$ such that $P_{Y \mid \xi}(\cdot, B)$ is a version of $\prob(Y \in B \mid \xi)$ for any $B\in \BR$.
So for any $y\in \Q$, there exits a measurable function $g_y: \XX \to \R$ such that $g_y(\xi)= \prob(Y >y \mid \xi)$ almost surely \cite[Lemma 11.7]{Bauer_MI} and hence
there exists a nullset $N\in \F$ such that $g_y(\xi(\omega))= P_{Y \mid \xi}(\omega, (y,\infty))$ for all $y\in \Q$ and for all $\omega \in \Omega \setminus N$. Thus for $P_\xi$-almost all $x\in \XX$, the family $(g_y)_{y \in \Q}$ characterizes a deterministic law $P_{Y \mid \xi=x}$, which we call \emph{conditional law of $Y$ given $\xi=x$}, by $P_{Y \mid \xi=x}((y,\infty))=g_y(x)$ for all $y\in \Q$ and imposing right-continuity.
 
 The following theorem shows that the conditional law of $Y$ given $\xi$ corresponds to the isotonic conditional law of $Y$ given $\xi$ if, and only if, the conditional law is isotonic in $\xi$, where equality between the Markov kernels $P_{Y \mid \xi}$ and $P_{Y \mid \LL(\xi)}$ is understood as usual in the sense that $P_{Y \mid \xi}(\cdot, B)=P_{Y \mid \LL(\xi)}(\cdot, B)$ almost surely for all $B \in \BR$. 
	
	\begin{thm}\label{thm:connection_to_classical_cond_laws} Let $Y$ be a random variable and let $\xi$ be a random element in an ordered metric space $(\XX,d,\preceq)$ that satisfies Assumption \ref{assump:ordered_metric_space}. Then $P_{Y \mid \xi}=P_{Y \mid \LL(\xi)}$ if and only if the conditional law of $Y$ given $\xi$ is almost surely isotonic in $\xi$, that is for $P_\xi$-almost all $x, x' \in \XX$ with $x \preceq x'$ it holds that $P_{Y \mid \xi = x} \st P_{Y \mid \xi = x'}$. 
	\end{thm}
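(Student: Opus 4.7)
The proof splits into the two implications, anchored at the rational survival function values $h_y(x) := P_{Y\mid\xi=x}((y,\infty))$ for $y \in \Q$, which are Borel-measurable functions $\XX \to [0,1]$ satisfying $h_y(\xi) = \prob(Y > y \mid \xi)$ almost surely. The overall strategy is to characterize both conditional laws through the family $(h_y(\xi))_{y \in \Q}$ and reduce the claimed equivalence to a factorization-through-an-increasing-function statement, which is precisely the content of Proposition \ref{prop:factorization_for_random_elements}.

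For the forward direction, suppose $P_{Y\mid\xi} = P_{Y\mid\LL(\xi)}$. Then $h_y(\xi)$, being a version of $\prob(Y > y \mid \LL(\xi))$, is $\LL(\xi)$-measurable, so by Proposition \ref{prop:factorization_for_random_elements} there exists an increasing Borel-measurable $g_y : \XX \to \R$ with $h_y(\xi) = g_y(\xi)$ almost surely. Consequently, the set $D_y := \{x \in \XX : h_y(x) = g_y(x)\}$ is $P_\xi$-conull for each $y \in \Q$, as is the countable intersection $D := \bigcap_{y \in \Q} D_y$. For $x, x' \in D$ with $x \preceq x'$, monotonicity of each $g_y$ yields $h_y(x) \leq h_y(x')$ for every $y \in \Q$, and right-continuity of survival functions extends this to all $y \in \R$, so $P_{Y\mid\xi=x} \st P_{Y\mid\xi=x'}$; Fubini then delivers the $P_\xi \otimes P_\xi$-almost-sure formulation.

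For the reverse direction, assume the $P_\xi \otimes P_\xi$-a.e.\ isotonicity of the conditional law. Fubini together with a countable intersection over $y \in \Q$ fixes a $P_\xi$-conull set $D \subseteq \XX$ on which every $h_y$ is isotonic. I aim to show that $h_y(\xi)$ is a version of $\prob(Y > y \mid \LL(\xi))$ for each $y \in \Q$, after which the uniqueness clause of Theorem \ref{thm:existence_and_uniqueness_ICL} identifies $P_{Y\mid\xi}$ and $P_{Y\mid\LL(\xi)}$ as Markov kernels. The two integral conditions in Definition \ref{defn:cond_expectations} come for free because $h_y(\xi) = \prob(Y > y \mid \xi)$ almost surely yields equality (not only the required inequality) on all of $\s(\xi) \supseteq \LL(\xi) \cup \s(h_y(\xi))$.

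The real content, and the main obstacle, is the $\LL(\xi)$-measurability of $h_y(\xi)$, equivalently (by Proposition \ref{prop:factorization_for_random_elements}) the existence of an increasing Borel-measurable $\tilde h_y : \XX \to \R$ with $\tilde h_y(\xi) = h_y(\xi)$ almost surely. I plan to build such an extension by invoking Assumption \ref{assump:ordered_metric_space} to select a countable subset $\{x_n\}_{n \in \N} \subseteq D$ that is cofinal-from-below in a suitable sense, and then setting
\begin{equation*}
\tilde h_y(x) := \sup \bigl\{ h_y(x_n) : n \in \N,\, x_n \preceq x \bigr\}, \qquad x \in \XX,
\end{equation*}
with the convention $\sup \emptyset = 0$. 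This $\tilde h_y$ is manifestly increasing, and its super-level sets are countable unions of the measurable upper sets $\{x \in \XX : x_n \preceq x\}$, hence $\tilde h_y$ is $(\BX \cap \U)$-measurable. The identification $\tilde h_y(\xi) = h_y(\xi)$ almost surely is where Assumption \ref{assump:ordered_metric_space} is genuinely used: the countable approximating family must be rich enough for the supremum to reconstruct $h_y(x)$ for $P_\xi$-a.e.\ $x \in D$, which I expect to follow from the separability and order structure encoded there. Once measurability is established, the integral conditions close the argument and yield $h_y(\xi) = \prob(Y > y \mid \LL(\xi))$ almost surely, as required.
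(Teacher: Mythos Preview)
Your forward direction and the overall shape of your reverse direction match the paper's proof. In both, the reverse implication reduces to showing that $h_y(\xi)$ is $\LL(\xi)$-measurable, after which the two integral conditions of Definition \ref{defn:cond_expectations} follow immediately from $\LL(\xi)\cup\s(h_y(\xi))\subseteq\s(\xi)$, exactly as you say. The paper, however, does not build an explicit isotonic extension: it simply records that the factor $g_y$ is increasing on the support of $P_\xi$ and invokes Proposition \ref{prop:factorization_for_random_elements} directly.

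Your more explicit route adds two steps that are not in the paper and that, as written, are not yet justified. First, the passage ``Fubini together with a countable intersection over $y\in\Q$ fixes a $P_\xi$-conull set $D\subseteq\XX$ on which every $h_y$ is isotonic'' does not follow from Fubini: from a $P_\xi\otimes P_\xi$-a.e.\ hypothesis, Fubini gives only that for $P_\xi$-a.e.\ $x$ one has $h_y(x)\le h_y(x')$ for $P_\xi$-a.e.\ $x'\succeq x$, which is strictly weaker than isotonicity on a single conull $D$. (The paper reads the hypothesis as providing such a $D$ directly---it takes $g_y$ increasing on the support---so this reduction may simply be unnecessary.) Second, even granting $D$, the claimed identity $\tilde h_y(\xi)=h_y(\xi)$ a.s.\ for $\tilde h_y(x)=\sup\{h_y(x_n):x_n\preceq x\}$ requires the countable family $\{x_n\}\subseteq D$ to approximate every point of $D$ from below in the order; Assumption \ref{assump:ordered_metric_space} guarantees monotone approximating sequences in $\XX$, not within the conull set $D$, so this step still needs an argument.
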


For any $\s$-lattice $\A \subseteq \F$ and random variable $Y$, we have shown existence and uniqueness of the ICL $P_{Y \mid \A}$ and its connection to isotonicity. In the next section, we show that $P_{Y \mid \A}$ equivalently emerges as a minimizer of some expected scoring rule (Proposition \ref{prop:crps_characterization_of_ICL}) and that $P_{Y \mid \A}$ is even a universal minimizer in the sense that we may allow for a large class of scoring rules (Theorem \ref{thm:universality_of_ICL}).

\section{Universality of ICL}\label{Sec:Universality_of_ICL}


\subsection{Identifiable functionals, scoring functions and scoring rules}\label{Subsec:Forecast_Evaluation}
In this section, we introduce identifiable functionals, scoring functions and scoring rules in order to show some universality properties of ICL. 

Following \cite{Optimal}, we call $V: \R \times \R \to \R$ an \emph{identification function}, if $V(\cdot,y)$ is increasing and left-continuous for all $y \in \R$. Similar to the classical procedure in $M$-estimation, an identification function weights negative values in the case of underprediction against positive values in the case of overprediction \citep{Tilmann_Johannes_Calibration}. 
For an identification function $V$, we define the \emph{functional $T$ induced by $V$} as $T(F) = [T_F^-, T_F^+] \subseteq \bar{\R} =\R \cup \{\pm \infty\}$, for any $F\in \PR$, where the lower and upper bounds are given by
\[
T_F^- = \sup \left\{x \MMid \int_{-\infty}^{\infty} V(x, y) \diff F(y) < 0\right\} \eqand
T_F^+ = \inf \left\{x \MMid \int_{-\infty}^{\infty} V(x, y) \diff F(y) > 0\right\}.
\]
A functional $T$ is called \emph{of singleton type} if $T(F)$ is a singleton, and otherwise, $T$ is called a \emph{functional of interval type}. Relevant key examples for this paper are the mean functional with identification function  $V(x,y)=x-y$, and the quantile functional at level $\alpha\in (0,1)$ with identification function 
\begin{equation}\label{eq:identification_function_quantile}
    V^\alpha(x,y)=\one_{\{x>y\}}-\alpha.
\end{equation} 
Clearly, the mean functional is of singleton type but, in general, the quantile functional is of interval type. One may instead consider the singleton-valued \emph{lower quantile} $T_F^-$ or \emph{upper quantile} $T_F^+$. For any distribution $F$, almost all quantiles are uniquely defined, i.e.\ the family of $\alpha \in (0,1)$, for which the set of $\alpha$-quantiles forms an interval, has Lebesgue measure zero. 
We will mainly consider the lower-quantile $T_F^-$, which corresponds to the quantile function $F^{-1}$ evaluated at $\alpha \in (0,1)$, where $F^{-1}(\alpha) = \inf \{z \in \R \mid F(z) \geq \alpha\}$, $\alpha \in (0,1)$.

Point-valued forecasts for functionals of an unknown quantity should be compared using consistent scoring functions \citep{Gneiting_2011}. A measurable map $S : \mathbb{R}\times \mathbb{R}\rightarrow \bar{\R}$ is a \emph{consistent scoring function for the functional $T$ relative to the class $\mathcal{P} \subseteq \PR$} if for all $x \in \R$, $F \in \mathcal{P}$, the integral $\int S(x,y)\diff F(y)$ exists and $\int S(t,y)\diff F(y) \leq \int S(x,y) \diff F(y)$ for all $t\in T(F)$. The \emph{Brier score} (or \emph{quadratic score}) $\BS(x,y)=(x-y)^2$, $x,y \in \R$, and the \emph{quantile score}
\begin{align*}
    \QS_\alpha(x,y) &=(\one_{\{y \leq x\}}-\alpha)(x-y), \quad x,y \in \R,
\end{align*}
are main examples of consistent scoring functions for the mean functional, and for the quantile at level $\alpha \in (0,1)$, respectively. 

We adapt the methods of \cite{Optimal} that rely on the mixture or Choquet representation of consistent scoring functions introduced by \cite{Ehm_et_al} for the quantile and expectile functionals. Under some standard conditions, for any $\alpha \in (0,1)$, consistent scoring functions for the $\alpha$-quantile have an integral representation $\int_\R S_{\alpha,\eta}^Q(x,y) \diff H(\eta)$, with respect to the elementary $\alpha$-quantile loss functions
\begin{equation}\label{eq:elementary_loss_function_for_the_quantile}
    S_{\alpha,\eta}^Q(x,y) = (1-\alpha)\one_{\{y < \eta \le x\}} + \alpha\one_{\{x < \eta \le y\}},\quad \eta \in \R,
\end{equation}
where $H$ is a nonnegative $\s$-finite measure on $\R$. The quantile score $\QS_\alpha(x,y)$ arises when $H$ is the Lebesgue measure. Similarly, any consistent scoring function for the mean is a Bregman scoring function and can be written as $\int S_{\eta}^E(x,y) \diff H(\eta)$, where 
\begin{equation}\label{eq:elementary_loss_function_for_the_mean}
    S_{\eta}^E(x,y)=(\eta-y)\one_{\{y < \eta \le x\}} + (y-\eta)\one_{\{x < \eta \le y\}}, \quad  \eta \in \R,
\end{equation}
and $\BS(x,y)$ arises by taking $H$ to be the Lebesgue measure. 



Probabilistic forecasts, that provide full predictive distributions over the possible values of the future outcome should be compared and evaluated using proper scoring rules \citep{Gneiting_Raftery_2007}. A \emph{proper scoring rule} is a function $\myS: \mathcal{P}\times \R \mapsto \bar{\R}$ for some class $\mathcal{P}\subseteq \PR$ such that $\myS(F, \cdot)$ is measurable for any $F\in \mathcal{P}$, the integral $\int\myS(G,y) F(\diff y)$ exists, and
\begin{equation*}
    \int\myS(F,y) \diff F( y) \leq \int\myS(G,y) \diff F( y)
\end{equation*}
for all $F,G \in \mathcal{P}$.  A key example is the \emph{continuous ranked probability 
score} (CRPS; \cite{Matheson1976}), which is defined for all Borel probability measures, equivalently given by
\begin{equation*}
    \crps(F,y)=\int \BS(F(z),\one_{\{y\leq z\}}) \diff z= 2 \int_0^1 \QS_\alpha (F^{-1}(\alpha),y) \diff \alpha.
\end{equation*}




\subsection{ICL as a scoring rule minimizer}\label{Subsec:ICL_as_a_CRPS_minimizer}

We denote by $\HH$ the family of all Markov kernels from $(\Omega, \F)$ into $(\R, \BR)$. Any Markov kernel $Q \in \HH$ can equivalently be considered as a random element in $\PR$. 
If it is clear from the context that $Q$ is random we write $Q(x)$ or $Q(\cdot, x)$ to abbreviate $ Q(\cdot, (-\infty, x])$ and $\bar{Q}(x)$ or $\Bar{Q}(\cdot, x)$ to abbreviate $Q(\cdot, (x,\infty))$ for $x\in \R$. The next Proposition shows that ICL equivalently results as a CRPS minimizer.
\begin{prop} \label{prop:crps_characterization_of_ICL}
For a random variable $Y$ and a $\s$-lattice $\A\subseteq \F$, the ICL $P_{Y \mid \A}$ minimizes
\begin{equation}\label{eq:crps_minimizing_cond_of_ICL}
\E \crps (G,Y)
\end{equation}
over all random distributions $G$ in the class 
\begin{equation*}\label{eq:class_G_A}
    \GG_\A=\{G \in \HH \mid G(\cdot, (z, \infty))\textrm{ is }\A\textrm{-measurable for all }z \in \R \}.
\end{equation*}
The ICL $P_{Y\mid \A}$ is the unique minimizer in the sense that $P_{Y \mid \A}(\cdot, B)= G(\cdot, B)$ almost surely for all $B \in \BR$, for any $G \in \GG_\A$ that minimizes \eqref{eq:crps_minimizing_cond_of_ICL} as well. 
\end{prop}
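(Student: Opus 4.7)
The plan is to use the Brier-score representation $\crps(F,y)=\int_\R\BS(F(z),\one_{\{y\le z\}})\diff z$ together with the identity $F(z)-\one_{\{y\le z\}}=\one_{\{y>z\}}-\bar F(z)$ and apply Tonelli to the nonnegative integrand, obtaining
\begin{equation*}
    \E\crps(G,Y)=\int_\R\E\bigl(\bar{G}(\cdot,z)-\one_{\{Y>z\}}\bigr)^{\!2}\diff z
\end{equation*}
for every $G\in\GG_\A$. This decouples the problem across thresholds $z\in\R$ and reduces it to a family of $L_2$-projection problems: for each $z$, minimize the squared distance from $\one_{\{Y>z\}}$ to an $\A$-measurable $[0,1]$-valued random variable.

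For fixed $z$, the geometric definition of conditional expectations given a $\s$-lattice recalled after Definition~\ref{defn:cond_expectations} \citep{Brunk_1965} characterizes $\E(\one_{\{Y>z\}}\mid\A)$ as the unique orthogonal projection of $\one_{\{Y>z\}}$ onto the closed convex cone $L_2(\A)\subset L_2(\prob)$, and Theorem~\ref{thm:existence_and_uniqueness_ICL} identifies this projection with the ICL survival function $\bar{P}_{Y\mid\A}(\cdot,z)$. A short truncation argument --- replacing a candidate $W\in L_2(\A)$ by $W'=\max(\min(W,1),0)$, which is still $\A$-measurable and whose $L_2$-distance to the $[0,1]$-valued target can only shrink --- shows that this projection automatically takes values in $[0,1]$, so the range constraint imposed by $G$ being a Markov kernel is not binding. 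Since the family $\bigl(\bar{P}_{Y\mid\A}(\cdot,z)\bigr)_{z\in\R}$ assembles into the kernel $P_{Y\mid\A}\in\GG_\A$ from Theorem~\ref{thm:existence_and_uniqueness_ICL}, it realises the $z$-wise lower bound for every $z$ and therefore globally minimises \eqref{eq:crps_minimizing_cond_of_ICL}.

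For uniqueness, I would suppose that $G\in\GG_\A$ is another minimiser. The nonnegative gap between $\E(\bar{G}(\cdot,z)-\one_{\{Y>z\}})^2$ and its pointwise infimum integrates to zero over $z$, so it vanishes for Lebesgue-almost every $z$. By uniqueness of the orthogonal projection, $\bar{G}(\cdot,z)=\bar{P}_{Y\mid\A}(\cdot,z)$ almost surely for each such $z$, and these $z$ form a set $A\subseteq\R$ of full Lebesgue measure, hence dense in $\R$. Choosing a countable dense $D\subseteq A$ and taking $N$ to be the countable union of the associated null sets, one sees that off $N$ the two survival functions $z\mapsto\bar{G}(\omega,z)$ and $z\mapsto\bar{P}_{Y\mid\A}(\omega,z)$ agree on $D$, and by right-continuity on all of $\R$; this gives $G(\omega,\cdot)=P_{Y\mid\A}(\omega,\cdot)$ as measures on $\BR$ for $\prob$-almost every $\omega$. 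The hard part will be exactly this last step: turning per-threshold almost-sure equalities into one almost-sure identity of Markov kernels, which hinges on the right-continuity of survival functions and on the $L_2(\A)$-projection lying in $[0,1]$ so that the constrained and unconstrained minimisation problems coincide.
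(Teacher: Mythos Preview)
Your proposal is correct and follows essentially the same route as the paper: apply Tonelli to the Brier-score representation of the CRPS to decouple the problem into threshold-wise $L_2$-projections, identify each projection with $\prob(Y>z\mid\A)=\bar P_{Y\mid\A}(\cdot,z)$, and appeal to Theorem~\ref{thm:existence_and_uniqueness_ICL} for the kernel structure and uniqueness. Your write-up is in fact more complete than the paper's two-line proof: the paper simply asserts that ``uniqueness follows by Theorem~\ref{thm:existence_and_uniqueness_ICL}'' without spelling out that another minimiser $G$ a~priori only matches $P_{Y\mid\A}$ on the survival functions for Lebesgue-a.e.\ $z$, whereas you close this gap via density plus right-continuity, and you also make explicit (via truncation) why the $[0,1]$ range constraint coming from $G$ being a Markov kernel is not binding.
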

\begin{rem}[The class $\GG_\A$]\label{rem:class_GG_A} The class $\GG_\A$ can equivalently be defined as
\begin{equation*}
    \GG_\A = \{G \in \HH \mid G^{-1}(\alpha) \textrm{ is }\A \textrm{-measurable for all }\alpha \in (0,1)\}.
\end{equation*} This may be seen by the equality
\begin{equation*}
    \Omega \setminus \{G(\cdot, (z,\infty))>1-\alpha\} = \{G(z)\geq \alpha\}= \{z \geq G^{-1}(\alpha)\}= \Omega \setminus \{G^{-1}(\alpha)>z\},
\end{equation*}
for any $z\in \R$ and $\alpha \in (0,1)$ and where the second equality follows by the fact that $G(z)\geq \alpha$ if and only if $z\geq G^{-1}(\alpha)$, see e.g.\ \cite{Generalized_inverses}. 

Suppose now that $\A$ is generated by a random element $\xi$ in some ordered metric space $(\XX,d,\preceq)$ that satisfies Assumption \ref{assump:ordered_metric_space} and let $P_\xi$ denote the distribution of $\xi$. For any $G\in \HH$, $G_{\xi}$ and $G_{\LL(\xi)}$ are Markov kernels such that $G_{\xi}(\cdot, B)=\E(G(\cdot, B)\mid \xi)$ almost surely for any $B\in \BR$ and $G_{\LL(\xi)}(\cdot, (z,\infty))=\E(G(\cdot, (z,\infty))\mid \LL(\xi))$ almost surely for any $z\in \R$. Clearly, for any $\s$-lattice $\A \subseteq \F$, we have $G=G_\A$ if and only if $G(\cdot, (z,\infty))$ is $\A$-measurable. By Proposition \ref{prop:factorization_for_random_elements}, the class $\GG_\A$ may alternatively be characterized as
\begin{equation*}
\GG_\A= \{G \in \mathscr{H}\mid  G_{\xi=x} \st G_{\xi=x'} \textrm{ for $P_\xi$-almost all }x,x'\in \XX \textrm{ with }x \preceq x'  \}.
\end{equation*}

Indeed, assume that for $G\in \HH$ it holds that $G(\cdot, (z,\infty))$ is $\LL(\xi)$-measurable for any $z\in \R$. By Proposition \ref{prop:factorization_for_random_elements}, for any $z\in \R$, there exists an increasing Borel measurable function $g_z$ such that $G(\cdot, (z,\infty))=g_z(\xi)$ almost surely. Recall that for $P_\xi$-almost all $x\in \XX$, $G_{\xi=x}$ denotes a deterministic law which is characterized by $G_{\xi=x}((z,\infty))=g_z(x)$ for all $z\in \Q$ and imposing right-continuity. Since $g_z$ is isotonic for any $z\in \R$, it follows directly that $G_{\xi=x} \st G_{\xi=x'}$ for $P_\xi$-almost all $x,x'\in \XX$ with $x\preceq x'$. 

Reversely, assume that for $G\in \HH$ it holds that $G_{\xi=x} \st G_{\xi=x'}$ for $P_\xi$-almost all $x,x'\in \XX$ with $x\preceq x'$. Then $G_{\xi}=G_{\LL(\xi)}$ by Theorem \ref{thm:connection_to_classical_cond_laws}. That is for any $z\in \R$, $G_\xi(\cdot, (z,\infty))$ is a regular version of $\E(G(\cdot,(z,\infty)\mid \LL(\xi))$ and hence $\LL(\xi)$-measurable. 

\end{rem}
Proposition \ref{prop:crps_characterization_of_ICL} allows to show that ICL generalizes isotonic distributional regression (IDR) introduced by \citet{IDR}. IDR is a nonparametric distributional regression method that uses the shape constraint of isotonicity between the covariates and the responses: Consider
training data consisting of covariates $\bx=(x_1, \dots, x_n)$ in a partially ordered set $(\XX, \preceq)$ and real valued responses $\by = (y_1, \dots, y_n) \in \R^n$ for $n\in \N$. \citet{IDR} show that there exists a unique minimizer of the criterion
\begin{equation}\label{eq:IDR_minimizing_criterion}
\frac{1}{n} \sum_{i=1}^{n}\crps(G_i,y_i)
\end{equation}
over all vectors of distributions $(G_1,\dots,G_n)$ with $G_i \st G_j$ if $x_i \preceq x_j$ for $i,j=1, \dots, n$ and call this minimizer \emph{Isotonic Distributional Regression (IDR) of $\by$ given $\bx$}. 
The key observation in showing that ICL generalizes IDR is that any covariate vector $\bx \in \XX^n$ induces a $\s$-lattice $\A$ on the set of indices $\{1, \dots, n\}$ through the corresponding order relations, that is $A\in \A$ for $A\subseteq \{1, \dots, n\}$ if and only if $i\in A$ and $x_i\preceq x_j$ implies $j\in A$. Any vector of distributions $(G_1, \dots, G_n)$ may be considered as a random cdf $G$ defined on the discrete probability space $\{1, \dots, n\}$ and $G \in \GG_\A$ if and only $G_i \st G_j$ if $x_i \preceq x_j$ for any $i,j \in \{1, \dots, n\}$. 
Hence, we exactly obtain the minimizing criterion \eqref{eq:IDR_minimizing_criterion} if we consider the expectation with respect to the empirical measure of the observations in \eqref{eq:crps_minimizing_cond_of_ICL}, and thus ICL generalizes IDR by Proposition \ref{prop:crps_characterization_of_ICL}. Measurability poses no problem in the finite sample case. For this reason \cite{IDR} can simply consider partially ordered spaces but we have to study ordered metric spaces. 

Analogously to Theorem 2.2 of \cite{IDR}, the next theorem shows that $P_{Y \mid \A}$ is not only optimal with respect to the CRPS but with respect to a broad class of proper scoring rules, and that for any $z\in \R$ and $\alpha \in (0,1)$, the random variables $P_{Y \mid \A}(z)$ and $P_{Y \mid \A}^{-1}(\alpha)$ are optimal with respect to all consistent scoring functions for the mean and the $\alpha$-quantile, respectively.
\begin{thm}[Universality of ICL]\label{thm:universality_of_ICL}
For a random variable $Y$ and a $\s$-lattice $\A\subseteq \F$, the ICL $P_{Y \mid \A}$ satisfies the following properties. 
\begin{enumerate}
\item The ICL $P_{Y \mid \A}$ minimizes $\E \myS(G,Y)$ over all $G \in \GG_\A$ uniquely in the sense of Proposition \ref{prop:crps_characterization_of_ICL} under any scoring rule $\myS$ of the form
\begin{equation*}
\myS(F,y) = \int_{\R \times \R} S_{\eta}^E(F(z),\one_{\{y \le z\}}) \diff H(z,\eta),
\end{equation*}
where  $S_{\eta}^E$ is defined at \eqref{eq:elementary_loss_function_for_the_mean} and $H$ is a $\s$-finite Borel measure on $\R \times \R$. Moreover, for every threshold value $z\in \R$, $P_{Y \mid \A}(\cdot, (z,\infty))$ is a minimizer of
\begin{equation*}
    \E S(X, \one_{\{Y > z\}})
\end{equation*}
over all ${\A}$-measurable random variables $X$, where $S$ is any consistent scoring function for the mean, that is, $S(x,y) = \int S_\eta^E(x,y)\diff \tilde{H}(\eta)$ for some $\s$-finite Borel measure $\tilde{H}$ on $\R$.
\item Suppose that the random vector $(P_{Y\mid \A}^{-1}(\alpha),Y)$ satisfies Assumption \ref{assump:joint_distribution} for any $\alpha \in (0,1)$. Then, the ICL $P_{Y \mid \A}$ minimizes $\E \myS(G,Y)$ over all $G \in \GG_\A$ uniquely in the sense of Proposition \ref{prop:crps_characterization_of_ICL} under any scoring rule $\myS$ of the form
\begin{equation*}
\myS(F,y) = \int_{(0,1)\times \R} S_{\alpha,\eta}^Q(F^{-1}(\alpha),y) \diff H(\alpha, \eta)
\end{equation*}
 where $S_{\alpha,\eta}^Q$ is defined at \eqref{eq:elementary_loss_function_for_the_quantile}, and $H$ is a $\s$-finite Borel measure on $(0,1) \times \R$. Moreover, for every quantile level $\alpha \in (0,1)$, $P_{Y \mid \A}^{-1}(\alpha)$ minimizes 
\begin{equation*}
\E S(X,Y)
\end{equation*}
over all $\A$-measurable random variables $X$, where $S$ is any consistent scoring function for the $\alpha$-quantile, that is, $S(x,y) = \int S_{\alpha,\eta}^Q(x,y)\diff \tilde{H}(\eta)$ for a $\s$-finite Borel measure $\tilde{H}$ on $\R$.
 \end{enumerate}
\end{thm}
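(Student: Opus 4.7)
The plan is to reduce both parts of the theorem, via the Choquet mixture representations of consistent scoring functions for the mean and the $\alpha$-quantile, to pointwise optimality statements for individual $\A$-measurable random variables, and then recover the integrated forms by Tonelli--Fubini.

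For part~1, I would first observe that for each fixed $z \in \R$ the random variable $P_{Y\mid \A}(\cdot,(z,\infty))$ is by construction a version of $\E(\one_{\{Y>z\}}\mid \A)$, the isotonic conditional expectation of the indicator. The universality of the isotonic conditional expectation under all consistent scoring functions for the mean---to be established through the theory of conditional functionals given $\s$-lattices in Section~\ref{Sec:Conditional_Functionals} by invoking the representation $S = \int S_\eta^E\, d\tilde H(\eta)$---then yields the pointwise claim. For the integrated claim, I would insert the mixture representation of $\myS$ and apply Tonelli--Fubini to write, for any $G \in \GG_\A$,
\begin{equation*}
\E \myS(G,Y) = \int_{\R\times\R} \E\, S_\eta^E\bigl(G(z),\one_{\{Y\le z\}}\bigr)\, dH(z,\eta).
\end{equation*}
Since $G \in \GG_\A$ forces $\Bar{G}(z)$ to be $\A$-measurable and hence $G(z)=1-\Bar{G}(z)$ to be $\overline{\A}$-measurable, each fibre is minimized in $G(z)$ at $\prob(Y\le z\mid \overline{\A})=P_{Y\mid \A}(z)$, using the identity $\prob(Y\le z\mid\overline{\A}) = 1-\prob(Y>z\mid \A)$ from the Remark following Theorem~\ref{thm:existence_and_uniqueness_ICL}. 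Uniqueness is then inherited from Proposition~\ref{prop:crps_characterization_of_ICL}, since $\crps$ already lies in this class and pins down every one-dimensional marginal $G(\cdot,(z,\infty))$ almost surely, hence the Markov kernel itself.

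For part~2 the parallel strategy requires identifying $P_{Y\mid \A}^{-1}(\alpha)$ with the isotonic conditional $\alpha$-quantile of $Y$ given $\A$, defined in Section~\ref{Sec:Conditional_Functionals} as an $\A$-measurable minimizer of $\E S(X,Y)$ for any consistent quantile scoring function $S$. The $\A$-measurability of $P_{Y\mid \A}^{-1}(\alpha)$ is immediate from the identity $\{P_{Y\mid \A}^{-1}(\alpha) > z\} = \{\Bar{P}_{Y\mid \A}(z) > 1-\alpha\} \in \A$ valid for every $z \in \R$ (via \cite{Generalized_inverses}). Once the identification is in place, the Fubini argument of part~1 applied to the elementary quantile scoring functions $S_{\alpha,\eta}^Q$ and the measure $H$ on $(0,1)\times \R$ delivers the integrated claim, with uniqueness again inherited from the CRPS through the decomposition $\crps(F,y) = 2\int_0^1 \QS_\alpha(F^{-1}(\alpha),y)\, d\alpha$.

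The main obstacle is the identification step in part~2: the classical characterization of a conditional quantile via $\prob(Y>q\mid \A)\le 1-\alpha\le \prob(Y\ge q\mid\A)$ does not transfer cleanly from $\s$-algebras to $\s$-lattices, as noted in the introduction. The universal optimality of $P_{Y\mid \A}^{-1}(\alpha)$ must therefore be routed through the scoring-function-based definition of conditional functionals in Section~\ref{Sec:Conditional_Functionals}, and Assumption~\ref{assump:joint_distribution} is needed to rule out pathologies arising from the potentially interval-valued nature of quantile functional evaluations (e.g.\ flat pieces of the conditional cdf at level $\alpha$). A secondary technical point is the integrability needed to justify Tonelli--Fubini for the $\s$-finite measure $H$, which follows from the uniform boundedness of the elementary scoring functions $S_\eta^E$ and $S_{\alpha,\eta}^Q$ in their first argument, combined with the integrability implicit in the properness of $\myS$.
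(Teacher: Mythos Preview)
Your plan matches the paper's approach: both parts reduce to pointwise optimality of $\A$-measurable random variables via the mixture representations, then integrate by Fubini (the paper invokes the disintegration theorem to write $H = P \otimes K$ rather than applying Tonelli directly on the product, but either route works since the elementary scores are nonnegative).

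The one piece you leave genuinely underspecified is \emph{how} the identification $P_{Y\mid\A}^{-1}(\alpha) = q_\alpha(Y\mid\A)$ in part~2 is actually effected. The paper does this by first showing (Lemma~\ref{lem:cond_quantiles_define_valid_quantile_function}) that the conditional quantiles $q_\alpha(Y\mid\A)$, constructed via the minimizing-path machinery of Section~\ref{Sec:Conditional_Functionals}, can be assembled coherently across all $\alpha \in (0,1)$ into a single random distribution $F \in \GG_\A$; this $F$ then minimizes the expected CRPS by the quantile-score decomposition and Fubini, and uniqueness from Proposition~\ref{prop:crps_characterization_of_ICL} forces $F = P_{Y\mid\A}$, whence $P_{Y\mid\A}^{-1}(\alpha) = F^{-1}(\alpha) = q_\alpha(Y\mid\A)$. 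Your sketch gestures at the CRPS-uniqueness argument but does not isolate this coherence step, which is the nontrivial ingredient needed to make the $\alpha$-by-$\alpha$ minimizers fit together into an element of $\GG_\A$. Also, your reading of Assumption~\ref{assump:joint_distribution} is slightly off: in the paper it enters not to handle interval-valued quantiles in the identification step, but through Lemma~\ref{lem:equivalence_for_conditional_quantiles_given_generated_sigma_lattice} in the quantile-calibration argument; the identification $P_{Y\mid\A}^{-1}(\alpha) = q_\alpha(Y\mid\A)$ itself does not use it.
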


We strongly believe that the second part of the theorem also holds without Assumption \ref{assump:joint_distribution}. However, we are currently not able to give a proof. We discuss the assumption in Section \ref{Sec:Conditional_Functionals}.


For the proof of Theorem \ref{thm:universality_of_ICL}, we use the fact that ICL is threshold and quantile calibrated as we will establish in the next section. Furthermore, we introduce a way to view conditional functionals as minimizers of consistent scoring functions in Section \ref{Sec:Conditional_Functionals} by adapting the methods of \cite{Optimal}, and this will allow us to conclude.

\section{Calibration properties of ICL}\label{Sec:Calibration}

Probabilistic predictions are called calibrated if they are statistically consistent with the observed outcomes. 
Consider a random variable $Y$ and a (random) predictive distribution for $Y$, denoted by $F$. The random forecast $F$ is called \emph{auto-calibrated} \citep{Tsyplakov2013}, if the conditional law of $Y$ given $F$ equals $F$, that is $P_{Y\mid F}=F$, or equivalently
\begin{equation}\label{defn_auto_calibration}
    F(y)= \prob(Y \leq y \mid F) \quad \textrm{almost surely for all } y \in \R.
\end{equation}
For any $y\in \R$, we may condition on $F(y)$ instead of $F$ in \eqref{defn_auto_calibration}, to obtain the weaker notion of threshold calibration, i.e.\ we call the forecast $F$ \emph{threshold calibrated} \citep{IDR}, if
\begin{equation*}\label{defn_threshold_calibration}
    F(y)= \prob(Y \leq y \mid F(y)) \quad \textrm{almost surely for all } y \in \R.
    \end{equation*}
We adapt the definition of \cite{Tilmann_Johannes_Calibration} slightly and call the forecast $F$ \emph{quantile calibrated}, if
\begin{equation}\label{defn_quantile_calibration}
    F^{-1}(\alpha)= q_\alpha(Y \mid F^{-1}(\alpha)) \quad \textrm{almost surely for all } \alpha \in (0,1), 
    \end{equation}
where for any $\alpha \in (0,1)$, $q_\alpha(Y \mid F^{-1}(\alpha))$ denotes the lower-$\alpha$-quantile of the conditional law of $Y$ given $F^{-1}(\alpha)$. Restricting to the lower quantile ensures that all random quantities in \eqref{defn_quantile_calibration} are real-valued.
Since almost all quantiles of the posited distribution $F$ are point-valued, 
the restriction to lower quantiles in the definition of quantile calibration is not limiting. For $\alpha \in (0,1)$, let $X=q_\alpha(Y \mid F^{-1}(\alpha))$. Then $\prob(Y<X \mid F^{-1}(\alpha)) \leq \alpha \leq \prob(Y\leq X \mid F^{-1}(\alpha))$ almost surely, see e.g.\ \cite{Tomkins}. Hence, for any quantile calibrated forecast $F$,
\begin{equation*}
    \prob(Y<F^{-1}(\alpha) \mid F^{-1}(\alpha)) \leq \alpha \leq \prob(Y\leq F^{-1}(\alpha) \mid F^{-1}(\alpha))\quad \textrm{almost surely for all } \alpha \in (0,1).
\end{equation*}
\citet{Tilmann_Johannes_Calibration} provide a recent comprehensive analysis of the connections and implications between different notions of calibration found in the literature.

Let $\A \subseteq \F$ be a $\s$-lattice. Clearly, the ICL $P_{Y \mid \A}$ may be interpreted as a predictive distribution for $Y$. In fact, $P_{Y \mid \A}$ is the theoretically optimal distributional prediction for $Y$ given all information contained in $\A$. The following proposition shows that ICL is threshold- and quantile-calibrated.
\begin{prop}\label{prop:ICL_is_threshold_calibrated}
    For any $\s$-lattice $\A \subseteq \F$, the ICL $P_{Y \mid \A}$ is threshold calibrated, that is,
    \begin{equation*}
        P_{Y \mid \A}(y) = \prob(Y \leq y \mid P_{Y \mid \A}(y))\quad \textrm{almost surely for all } y \in \R.
    \end{equation*}
\end{prop}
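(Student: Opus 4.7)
Fix $y \in \R$. The plan is to set $X_y := P_{Y \mid \A}(y) = 1 - P_{Y \mid \A}((y,\infty))$ and reduce the threshold-calibration identity to the classical characterization of $\E(\one_{\{Y \leq y\}} \mid \s(X_y))$. By Theorem \ref{thm:existence_and_uniqueness_ICL}, the random variable $Z_y := P_{Y \mid \A}((y,\infty))$ is a version of the isotonic conditional expectation $\E(\one_{\{Y > y\}} \mid \A)$; in particular $Z_y$ is $\A$-measurable, and since $X_y = 1 - Z_y$ is a bi-measurable function of $Z_y$ we have $\s(X_y) = \s(Z_y)$.

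I would then invoke the second defining property \eqref{eq:defn_cond_expectations2} of conditional expectations given $\s$-lattices applied to $Z_y = \E(\one_{\{Y > y\}} \mid \A)$. It yields the \emph{equality}
\[
\E(Z_y \one_B) = \E(\one_{\{Y > y\}} \one_B) \quad \textrm{for all } B \in \s(Z_y) = \s(X_y),
\]
and subtracting both sides from $\prob(B)$ gives the integral identity
\[
\E(X_y \one_B) = \E(\one_{\{Y \leq y\}} \one_B) \quad \textrm{for all } B \in \s(X_y).
\]

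The final step is to recognize this as the defining property of the classical conditional expectation: since $X_y$ is trivially $\s(X_y)$-measurable and satisfies the integral identity against every indicator in $\s(X_y)$, uniqueness of classical conditional expectations yields $X_y = \E(\one_{\{Y \leq y\}} \mid \s(X_y)) = \prob(Y \leq y \mid X_y)$ almost surely, which is exactly threshold calibration at level $y$. There is no serious obstacle in the argument; the conceptual point to surface is that, while the inequality \eqref{eq:defn_cond_expectations1} is what makes isotonic conditional expectations differ from their $\s$-algebra counterparts, the complementary equality \eqref{eq:defn_cond_expectations2} is precisely tailored to conditioning on the $\s$-algebra generated by the conditional expectation itself — which is exactly the content of threshold calibration.
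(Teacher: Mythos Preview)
Your proof is correct and follows essentially the same route as the paper: both fix $y$, invoke the equality \eqref{eq:defn_cond_expectations2} for $Z_y = \prob(Y>y \mid \A)$ to get $\E(Z_y\one_B)=\E(\one_{\{Y>y\}}\one_B)$ for $B\in\s(Z_y)=\s(X_y)$, subtract from $\prob(B)$ to obtain the integral identity for $X_y=1-Z_y$, and recognize this as the defining property of the classical conditional expectation $\prob(Y\le y\mid X_y)$.
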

\begin{prop}\label{prop:ICL_is_quantile_calibrated}
    Let  $\A \subseteq \F$ be a $\s$-lattice, and suppose that the random vector $(P_{Y\mid \A}^{-1}(\alpha),Y)$ satisfies Assumption \ref{assump:joint_distribution} for any $\alpha \in (0,1)$. Then, the ICL $P_{Y \mid \A}$ is quantile calibrated, that is
    \begin{equation*}
        P_{Y \mid \A}^{-1}(\alpha) = q_\alpha(Y \mid P_{Y \mid \A}^{-1}(\alpha))\quad \textrm{almost surely for all } \alpha \in (0,1).
    \end{equation*}
\end{prop}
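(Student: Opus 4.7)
The plan is to fix $\alpha \in (0,1)$, set $X := P_{Y \mid \A}^{-1}(\alpha)$ (which is $\A$-measurable by Remark \ref{rem:class_GG_A}), and reduce the statement $X = q_\alpha(Y \mid \sigma(X))$ to a two-sided balance of identification-function averages on $\sigma(X)$. By the Tomkins-type characterization mentioned after \eqref{defn_quantile_calibration}, quantile calibration of $P_{Y \mid \A}$ is equivalent to
\begin{equation*}
\prob(Y < X \mid \sigma(X)) \leq \alpha \leq \prob(Y \leq X \mid \sigma(X)) \quad \text{almost surely},
\end{equation*}
or, writing $V^\alpha(x,y) = \one_{\{x > y\}} - \alpha$ as in \eqref{eq:identification_function_quantile} and $V^{\alpha,+}(x,y) = \one_{\{x \geq y\}} - \alpha$, to the sandwich
\begin{equation*}
\E\bigl[V^\alpha(X,Y)\one_B\bigr] \leq 0 \leq \E\bigl[V^{\alpha,+}(X,Y)\one_B\bigr] \quad \text{for every } B \in \sigma(X).
\end{equation*}

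I would then extract these two inequalities by identifying $X$ as a conditional $\alpha$-quantile of $Y$ given the $\sigma$-lattice $\A$, in the sense of the identification-function framework developed in Section \ref{Sec:Conditional_Functionals}, and by isolating the $\sigma(X)$-balance part of that characterization. This parallels the proof of threshold calibration (Proposition \ref{prop:ICL_is_threshold_calibrated}): there, $\prob(Y \leq y \mid \A) = 1 - \prob(Y > y \mid \A)$ satisfies the defining equality \eqref{eq:defn_cond_expectations2} on $\sigma(P_{Y \mid \A}(y))$, and that equality is precisely what delivers threshold calibration via the classical characterization of conditional expectations. In the quantile setting the identification function $V^\alpha$ plays the role that $V(x,y) = x - y$ plays for the mean, and the $\sigma(X)$-balance above replaces \eqref{eq:defn_cond_expectations2}; once the sandwich is in place, Tomkins' characterization closes the argument.

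The hard part will be producing the two-sided balance on $\sigma(X)$ for the interval-valued $\alpha$-quantile. For the singleton-type mean, \eqref{eq:defn_cond_expectations2} provides a single equality; for quantiles one must simultaneously control the left-continuous $V^\alpha$ and its right-continuous counterpart $V^{\alpha,+}$ on all of $\sigma(X)$, since these differ on atoms of $\{X = Y\}$ whose probability is not directly pinned down by the ICL construction. This is precisely where Assumption \ref{assump:joint_distribution} is expected to enter: it regulates the joint law of $(X, Y)$ (plausibly by controlling atomic mass on the diagonal) so that the identification-function characterization of Section \ref{Sec:Conditional_Functionals} yields both inequalities of the sandwich rather than only one.
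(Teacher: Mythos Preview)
Your plan correctly identifies the target (the Tomkins-type sandwich on $\sigma(X)$ for $X = P_{Y\mid\A}^{-1}(\alpha)$) and correctly anticipates that Assumption \ref{assump:joint_distribution} is what bridges the $\sigma$-lattice construction and the $\sigma$-algebra characterization. However, two essential steps are only asserted, not argued, and each carries the real weight of the proof.

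First, you take for granted that $X$ ``is a conditional $\alpha$-quantile of $Y$ given $\A$,'' but nothing in the construction of $P_{Y\mid\A}$ says this: the ICL is built threshold-by-threshold from the isotonic conditional probabilities $\prob(Y>y\mid\A)$, not quantile-by-quantile. The paper obtains $P_{Y\mid\A}^{-1}(\alpha)=q_\alpha(Y\mid\A)$ by a global detour: write $\E\crps(G,Y)=\int_0^1 \E\QS_\alpha(G^{-1}(\alpha),Y)\diff\alpha$, minimize each integrand via Proposition \ref{prop:conditional_functional_is_valid}, assemble the resulting quantile curves into an element of $\GG_\A$ (Lemma \ref{lem:cond_quantiles_define_valid_quantile_function}), and then invoke the CRPS-uniqueness of Proposition \ref{prop:crps_characterization_of_ICL} to conclude that this element must be $P_{Y\mid\A}$. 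Without this step, your identification of $X$ with $q_\alpha(Y\mid\A)$ is unjustified.

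Second, even granting $X=q_\alpha(Y\mid\A)$, the framework of Section \ref{Sec:Conditional_Functionals} only tells you that $\{X\ge\eta\}$ minimizes $A\mapsto\E[\one_A V^\alpha(\eta,Y)]$ over $A\in\A$; it gives no direct control on $\sigma(X)$. The paper does not ``isolate the $\sigma(X)$-balance'' from this characterization. Instead it first restricts to the smaller lattice $\LL(X)\subseteq\A$ (so $X$ is still a minimizer, hence $X=q_\alpha(Y\mid\LL(X))$) and then appeals to Lemma \ref{lem:equivalence_for_conditional_quantiles_given_generated_sigma_lattice}, whose proof is a case-by-case analysis of the joint law of $(X,Y)$ exploiting the explicit description $\LL(X)=\{\{X>a\},\{X\ge a\}:a\in\R\}$. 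That lemma, not a generic diagonal-atom argument, is where Assumption \ref{assump:joint_distribution} enters. Your sketch supplies neither the intermediate $\LL(X)$ step nor a substitute for this lemma, and the analogy with Proposition \ref{prop:ICL_is_threshold_calibrated} is misleading here: for the mean, condition \eqref{eq:defn_cond_expectations2} already lives on $\sigma(\E(Y\mid\A))$, whereas for quantiles no such built-in $\sigma$-algebra balance is available.
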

Proposition \ref{prop:ICL_is_threshold_calibrated} may be shown directly using the properties of conditional expectations and isotonic conditional laws. The analogous claim for quantiles is more involved and needs a further analysis of condition quantiles. For this reason, we will prove Proposition \ref{prop:ICL_is_quantile_calibrated} in Section \ref{Subsec:cond_quantiles}. We strongly believe that Proposition \ref{prop:ICL_is_quantile_calibrated} also holds without Assumption \ref{assump:joint_distribution} but we can currently not give a fully general proof, see Section \ref{Subsec:cond_quantiles} for a detailed discussion.

Threshold- and quantile-calibration of ICL together imply that ICL satisfies a further important calibration property, which is known as \emph{probabilistic calibration} \citep{GneitingRanjan2013}. The argument, which holds for any threshold- and quantile-calibrated $F$, is due to Alexander Henzi and given in Appendix \ref{appendix:PIT_calibration}. 

The following definition is natural. Proposition \ref{prop:implications_calibration} is illustrated in Figure \ref{figure1}.
\begin{defn}\label{def:isotonic_calibration}
		We call a probabilistic forecast $F$ \emph{isotonically calibrated} if $P_{Y \mid \LL(F)}=F$.
	\end{defn}
\begin{prop}\label{prop:implications_calibration}
    Auto-calibration implies isotonic calibration, and isotonic calibration implies threshold calibration and quantile calibration.
\end{prop}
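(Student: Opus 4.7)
The plan is to verify the three implications separately. Auto-calibration $\Rightarrow$ isotonic calibration will reduce to the factorization result of Theorem \ref{thm:connection_to_classical_cond_laws} applied with $\xi = F$, since the main point will be that when the classical conditional law $P_{Y \mid F}$ already equals $F$ itself, it is trivially isotonic in $F$ and thus must agree with the ICL. The two remaining implications will follow almost immediately by specializing the calibration properties of ICL (Propositions \ref{prop:ICL_is_threshold_calibrated} and \ref{prop:ICL_is_quantile_calibrated}) to the $\s$-lattice $\A = \LL(F)$.

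For the first step, I would set $\xi = F$, viewed as a random element of $\PR$ endowed with the stochastic order $\st$ and the L\'evy metric. As noted immediately after Lemma \ref{lem:sigma_algebras_generated_by_generated_sigma_lattice}, this ordered metric space satisfies Assumption \ref{assump:ordered_metric_space}, so Theorem \ref{thm:connection_to_classical_cond_laws} is applicable. Auto-calibration says $P_{Y \mid F} = F$, so for $P_F$-almost all realizations $F'\in \PR$ the conditional law $P_{Y \mid F = F'}$ equals the deterministic distribution $F'$ itself; in particular if $F_1 \st F_2$ then $P_{Y \mid F = F_1} = F_1 \st F_2 = P_{Y \mid F=F_2}$, so the conditional law of $Y$ given $F$ is almost surely isotonic in $F$. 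Theorem \ref{thm:connection_to_classical_cond_laws} then gives $P_{Y\mid F}=P_{Y \mid \LL(F)}$, and combining with auto-calibration yields $P_{Y \mid \LL(F)} = F$, which is isotonic calibration.

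For the second step, I would set $\A = \LL(F)$. Isotonic calibration is the equality $P_{Y \mid \A} = F$ of Markov kernels, so in particular $P_{Y \mid \A}(y) = F(y)$ and $P_{Y \mid \A}^{-1}(\alpha) = F^{-1}(\alpha)$ almost surely for every $y \in \R$ and $\alpha \in (0,1)$. Substituting these into Proposition \ref{prop:ICL_is_threshold_calibrated} gives
\begin{equation*}
F(y) = P_{Y \mid \A}(y) = \prob\bigl(Y \leq y \mid P_{Y \mid \A}(y)\bigr) = \prob\bigl(Y \leq y \mid F(y)\bigr) \quad \textrm{a.s.},
\end{equation*}
which is threshold calibration of $F$, and substituting into Proposition \ref{prop:ICL_is_quantile_calibrated} yields
\begin{equation*}
F^{-1}(\alpha) = P_{Y \mid \A}^{-1}(\alpha) = q_\alpha\bigl(Y \mid P_{Y \mid \A}^{-1}(\alpha)\bigr) = q_\alpha\bigl(Y \mid F^{-1}(\alpha)\bigr) \quad \textrm{a.s.},
\end{equation*}
which is quantile calibration. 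The latter inherits the joint-distribution caveat from Proposition \ref{prop:ICL_is_quantile_calibrated}, so I would flag that the quantile-calibration conclusion holds under Assumption \ref{assump:joint_distribution} applied to $(F^{-1}(\alpha), Y)$.

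There is no substantial obstacle beyond bookkeeping once Theorem \ref{thm:connection_to_classical_cond_laws} and the two ICL-calibration propositions are in hand; the only conceptual point worth emphasizing is the triviality of the isotonicity check for auto-calibrated $F$, which is precisely what makes the first implication fall out of the factorization theorem rather than requiring a separate argument.
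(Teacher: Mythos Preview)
Your proposal is correct. For the implications $\textrm{IC}\Rightarrow\textrm{TC}$ and $\textrm{IC}\Rightarrow\textrm{QC}$ you proceed exactly as the paper does, by specializing Propositions \ref{prop:ICL_is_threshold_calibrated} and \ref{prop:ICL_is_quantile_calibrated} to $\A=\LL(F)$ and substituting $P_{Y\mid\LL(F)}=F$. The flag about Assumption \ref{assump:joint_distribution} is appropriate and matches the paper's standing caveat.

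For $\textrm{AC}\Rightarrow\textrm{IC}$ your route differs in presentation from the paper's. You invoke Theorem \ref{thm:connection_to_classical_cond_laws} with $\xi=F$: auto-calibration makes $P_{Y\mid F=F'}=F'$, so isotonicity in $F$ is trivial, and the theorem yields $P_{Y\mid F}=P_{Y\mid\LL(F)}$. The paper instead verifies directly that $F(\cdot,(a,\infty))$ satisfies the two defining conditions \eqref{eq:defn_cond_expectations1}--\eqref{eq:defn_cond_expectations2} of $\prob(Y>a\mid\LL(F))$: it checks $\LL(F)$-measurability via the increasing evaluation map $G\mapsto\bar G(a)$, and then uses $\LL(F)\subseteq\s(F)$ and $\s(F(\cdot,(a,\infty)))\subseteq\s(F)$ together with auto-calibration to get the required (in)equalities of expectations. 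The two arguments are not really independent---the proof of Theorem \ref{thm:connection_to_classical_cond_laws} performs precisely this verification in the general case---so your approach is essentially a clean reuse of machinery already established, while the paper's is a self-contained re-derivation specialized to $\xi=F$. Either is fine; yours is arguably more economical.
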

\begin{figure}[H]
\centering
\begin{tabular}{ccccc}
 &  &  AC & & \\ 
&  &    $\Downarrow$ & & \\
&  &  IC & &\\
& $\SWarrow$    & & $\SEarrow$  & \\
TC &&&& QC
 \end{tabular}
\caption{The implications between auto-calibration (AC), isotonic calibration (IC), threshold calibration (TC) and quantile calibration (QC) as given in Proposition \ref{prop:implications_calibration}.}
\label{figure1}
\end{figure}
The following examples show that the reverse directions in Proposition \ref{prop:implications_calibration} do not hold in general. Hence, isotonic calibration is strictly weaker than auto-calibration and strictly stronger than threshold- or quantile calibration. 
\begin{exmp}
    To see that isotonic calibration does not imply auto-calibration, one can consider the equiprobable forecast $F$ in Figure 3 of \cite{Tilmann_Johannes_Calibration}, where we are given the two equiprobable realisations $F_1$ and $F_2$ of $F$ and the conditional law of $Y$ given $F=F_i$, for short $Q_i$, for $i=1,2$. Clearly, $F$ fails to be auto-calibrated. Since $F_2\st F_1$ but $Q_2(z)\leq Q_1(z)$ for $z\in [0,2]$, we have to pool $Q_1$ and $Q_2$ for $z\in [0,2]$ to obtain the isotonic conditional law of $Y$ given $F=F_i$, for short $G_i$. In summary $G_i(z)=(Q_1(z)+Q_2(z))/2$ for  $z\in [0,2]$ and $G_i(z)=Q_i(z)$ for $z\in [2,4]$, and hence $G_i=F_i$, that is, $F$ is isotonically calibrated.
\end{exmp}
\begin{exmp}
    Consider Example 2.14 (a) of \cite{Tilmann_Johannes_Calibration}, where the forecast $F$ may attain four different mixtures of uniform distributions. Here, $F$ is threshold- and quantile-calibrated (with respect to the lower quantile) but fails to be isotonically calibrated.
\end{exmp}

If the outcome is binary, that is, $Y\in \{0,1\}$, the probabilistic forecast $F$ takes the form of a predictive probability $X\in [0,1]$. In this case, auto-calibration serves as an universal notion of calibration \cite[Theorem 2.11]{GneitingRanjan2013}, and the next lemma implies that auto-calibration and isotonic calibration are then equivalent. The result connects conditional expectations given $\s$-lattices and conditional expectations given $\s$-algebras and is of its own interest since it is not limed to binary outcomes. 
\begin{lem}\label{lem:equivalence_auto_and_iso_calibration_for_random_variables}
    For any integrable random variables $X$ and $Y$, we have $\E(Y \mid X)=X$ if and only if $\E(Y \mid \LL(X))=X$. 
\end{lem}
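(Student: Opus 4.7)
The plan is to verify both directions directly from Definition \ref{defn:cond_expectations}. The crucial structural input is the formula \eqref{eq:generated_sigma_algebra_for_random_variables}, which shows that $\LL(X)$ consists only of sets of the form $\{X>a\}$ or $\{X\geq a\}$, and in particular $\LL(X) \subseteq \s(X)$.

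For the ``only if'' direction, I would assume $\E(Y \mid X) = X$ and check conditions \eqref{eq:defn_cond_expectations1} and \eqref{eq:defn_cond_expectations2} with $\A = \LL(X)$ and candidate $X$. Note that $X$ is trivially $\LL(X)$-measurable. Condition \eqref{eq:defn_cond_expectations2} requires $\E(X\one_B) = \E(Y\one_B)$ for all $B \in \s(X)$, which is exactly the defining property of the classical conditional expectation $X = \E(Y \mid X)$. For condition \eqref{eq:defn_cond_expectations1}, any $A \in \LL(X)$ also lies in $\s(X)$, so the same classical defining property gives $\E(Y \one_A) = \E(X \one_A)$, and in particular $\E(Y \one_A) \leq \E(X \one_A)$. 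Uniqueness of the isotonic conditional expectation (\citealp{Brunk_1963}) then gives $\E(Y \mid \LL(X)) = X$.

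For the ``if'' direction, I would assume $\E(Y \mid \LL(X)) = X$. Property \eqref{eq:defn_cond_expectations2} in Definition \ref{defn:cond_expectations} with $\A = \LL(X)$ then directly delivers $\E(X\one_B) = \E(Y\one_B)$ for every $B \in \s(X)$. Combined with the $\s(X)$-measurability and integrability of $X$, this is precisely what characterizes the classical conditional expectation, so $\E(Y \mid X) = X$.

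There is essentially no obstacle in this lemma; the content is entirely bookkeeping. The point worth emphasizing is that condition \eqref{eq:defn_cond_expectations2} was built into Brunk's definition precisely so that the classical conditional expectation property against $\s(X)$ is encoded automatically, which is what makes the ``if'' direction immediate and symmetrizes the two implications. The inclusion $\LL(X) \subseteq \s(X)$ via \eqref{eq:generated_sigma_algebra_for_random_variables} then closes the gap in the other direction, since it converts equalities on $\s(X)$ into the (weaker) inequalities required on $\LL(X)$.
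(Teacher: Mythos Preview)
Your proof is correct and follows essentially the same approach as the paper: both directions hinge on the inclusion $\LL(X)\subseteq\s(X)$ for the ``only if'' part and on condition \eqref{eq:defn_cond_expectations2} of Definition \ref{defn:cond_expectations} for the ``if'' part. The only minor remark is that the inclusion $\LL(X)\subseteq\s(X)$ is immediate from the definitions and does not actually require the explicit description \eqref{eq:generated_sigma_algebra_for_random_variables}.
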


\section{Conditional functionals given $\sigma$-lattices}\label{Sec:Conditional_Functionals}
\subsection{General construction}
In this section, we propose a general notion of conditional (identifiable) functionals building on the work of \cite{Optimal}. Although our main motivation is to consider conditional quantiles in order to show Theorem \ref{thm:universality_of_ICL} and Proposition \ref{prop:ICL_is_quantile_calibrated}, the considerations in this section are of independent interest. Moreover, the given recipe is perfectly applicable as well for $\s$-algebras instead of $\s$-lattices as illustrated in Remark \ref{rem:conditional_functionals_for_sigma_algebras}. 

Consider a random variable $Y$ and an identifiable functional $T$ with identification function $V$. For a $\s$-lattice $\A \subseteq \F$, our aim is to find an $\A$-measurable random variable $X$ that minimizes
\begin{equation}\label{eq:minimizing_criterion_conditional_functionals}
\E (S(X,Y)),
\end{equation}
over all $\A$-measurable random variables $X$ such that the expectation exists, where $S$ is a consistent scoring function for $T$ with representation $S(x,y) = \int S_\eta(x,y)\diff H(\eta)$ for some $\s$-finite measure $H$ on $\R$, and elementary loss functions
\begin{equation}\label{eq:Seta}
S_\eta(x,y) = V(\eta,y)\one_{\{y < \eta \le x\}} - V(\eta,y)\one_{\{x < \eta \le y\}}, \quad \eta \in \R.
\end{equation}
The minimizing criterion \eqref{eq:minimizing_criterion_conditional_functionals} can be motivated by the fact that the conditional expectation of any square integrable $Y$ given $\A$ minimizes $\E(X-Y)^2$ over all square integrable $\A$-measurable random variables $X$. Clearly, also conditional quantiles may be characterized as minimizers of some expected loss, see e.g.\ \cite{Armerin}. 

Throughout this section, we assume that $\E(V(\eta,Y))$ is finite for all $\eta \in \R$. If there exists an $\A$-measurable random variable $X$ that minimizes
\begin{equation}\label{eq:minimizing_criterion_conditional_functionals_elementary}
\E(S_\eta(X,Y))
\end{equation}
   over all $\A$-measurable random variables $X$ simultaneously for all $\eta \in \R$, then it follows that $X$ is also a solution for the minimizing problem in \eqref{eq:minimizing_criterion_conditional_functionals}. For a fixed $\eta \in \R$, minimizing equation \eqref{eq:minimizing_criterion_conditional_functionals_elementary} over the class of all $\A$-measurable random variables $X$ is equivalent to minimizing
	\begin{equation}\label{eq:Vmin}
 \int_{\{X \geq \eta\}} V(\eta, Y) \diff \prob
	\end{equation}
 over all $\A$-measurable random variables $X$. Since $X$ is $\A$-measurable, it follows that $\{X \geq \eta\} \in \A$ for all $\eta \in \R$. For $\eta \in \R$ and $A \in \A$, we hence consider
\begin{equation}\label{eq:def_s_A}
    s_A(\eta)= \E (\one_A V(\eta,Y)).
 \end{equation}
 Although our aim is to minimize $s_A(\eta)$ over all $A \in \A$ for any $\eta \in \R$, it will be useful to consider the same criterion but allow for general $A \in \F$. To point out this distinction, we define for $A \in \F$ \begin{equation*}
    v_A(\eta)= \E (\one_A V(\eta,Y)) .   
 \end{equation*}
 That is $ s_A(\eta)= v_A(\eta)$ for $A \in \A$, while $ s_A(\eta)$ is not defined for $A \in \F \setminus \A$. For $\eta \in \R$, denote by $M(\eta)$ the family of all sets $A \in \A$ that minimize \eqref{eq:def_s_A}. The following lemma guarantees that $M(\eta)\neq \emptyset$ for all $\eta \in \R$. 
 \begin{lem}\label{lem:M_eta_is_non_empty}
     For $\eta \in \R$, suppose that $A_n \in \A$ for all $n \in \N$ such that $s_{A_n}(\eta) \to \inf_{A \in \A} s_A(\eta)$. Then $\liminf_{n\to \infty} A_n \in M(\eta)$.
 \end{lem}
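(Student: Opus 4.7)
The proof will proceed in three parts, turning on the closure of $\A$ under countable operations and on the modularity of the set function $s_\cdot(\eta)$.

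First, I will verify that $A^* := \liminf_n A_n$ lies in $\A$. Writing $A^* = \bigcup_n B_n$ with $B_n := \bigcap_{k \geq n} A_k$, closure of $\A$ under countable intersections gives $B_n \in \A$, and closure under countable unions then gives $A^* \in \A$. The sequence $(B_n)_n$ is increasing with $B_n \uparrow A^*$, so $\one_{B_n} V(\eta,Y) \to \one_{A^*} V(\eta,Y)$ pointwise, and by dominated convergence with the integrable envelope $|V(\eta,Y)|$ (integrable by the standing assumption of this section) one has $s_{B_n}(\eta) \to s_{A^*}(\eta)$. Since each $B_n \in \A$ satisfies $s_{B_n}(\eta) \geq m := \inf_{A \in \A} s_A(\eta)$, the limit yields $s_{A^*}(\eta) \geq m$.

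Second, I will obtain the matching upper bound $s_{A^*}(\eta) \leq m$. Here the key observation is that $\one_{A_n \cap A^*} \to \one_{A^*}$ pointwise: on $A^*$, the definition of $\liminf$ forces $\omega \in A_n$ eventually, while on $(A^*)^c$ the indicator is identically zero. Dominated convergence then gives $s_{A_n \cap A^*}(\eta) \to s_{A^*}(\eta)$. Since $A_n \cap A^*,\ A_n \cup A^* \in \A$, the modularity identity $s_{A \cup B}(\eta) + s_{A \cap B}(\eta) = s_A(\eta) + s_B(\eta)$ (which is immediate from $\one_{A \cup B} + \one_{A \cap B} = \one_A + \one_B$) yields
\begin{equation*}
s_{A_n \cup A^*}(\eta) = s_{A_n}(\eta) + s_{A^*}(\eta) - s_{A_n \cap A^*}(\eta) \xrightarrow[n \to \infty]{} m + s_{A^*}(\eta) - s_{A^*}(\eta) = m.
\end{equation*}
Thus $(A_n \cup A^*)_n$ is a minimizing sequence in $\A$ consisting of supersets of $A^*$.

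Third, I will close the gap by combining Fatou's lemma with a subsequence and iterated modularity argument. A direct application of Fatou to $\one_{A_n} V(\eta, Y)$ (dominated below by $-V^-(\eta,Y)$, which is integrable) produces only the weaker estimate
\begin{equation*}
\E\bigl(V^+(\eta,Y)\one_{A^*}\bigr) - \E\bigl(V^-(\eta,Y)\one_{A^{**}}\bigr) \leq m,
\end{equation*}
where $A^{**} := \limsup_n A_n \in \A$. To upgrade this to $s_{A^*}(\eta) \leq m$, I plan to pass to a subsequence $n_k$ with $s_{A_{n_k}\cup A^*}(\eta) - m \leq 2^{-k}$ and apply iterated modularity: for any $K$, the finite intersection $C_K := \bigcap_{k=1}^K (A_{n_k} \cup A^*) \in \A$ satisfies $s_{C_K}(\eta) \leq s_{A_{n_1}\cup A^*}(\eta) + \sum_{k=2}^K (s_{A_{n_k}\cup A^*}(\eta) - m)$, uniformly bounded in $K$. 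Since $C_K \downarrow \bigcap_k (A_{n_k} \cup A^*) = A^* \cup \bigcap_k A_{n_k}$, dominated convergence combined with the distributivity of intersection over union and the fact that $\bigcap_k A_{n_k} \subseteq A^{**}$ will let me squeeze $s_{A^*}(\eta) \leq m + \varepsilon$ for every $\varepsilon > 0$.

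The principal obstacle is precisely this last step: Fatou's lemma alone gives only a bound tainted by an $\E(V^-(\eta,Y) \one_{A^{**} \setminus A^*})$ term, and closing that gap requires delicately exploiting both the $\sigma$-lattice closure of $\A$ (to keep iterated intersections admissible) and the minimizing property of $(A_n)_n$ (to force summable excess along a subsequence). I expect the bookkeeping in the iterated-modularity estimate, and showing that the residual set $(A^* \cup \bigcap_k A_{n_k}) \setminus A^*$ contributes negligibly to $s$, to be the technically demanding part.
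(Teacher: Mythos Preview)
Your diagnosis in step~3 is exactly right: applying Fatou to the nonnegative shifted integrand $\one_{A_n}V(\eta,Y)+|V(\eta,Y)|$ yields only
\[
\E\bigl[V^+(\eta,Y)\,\one_{A^*}\bigr]-\E\bigl[V^-(\eta,Y)\,\one_{A^{**}}\bigr]\le m,
\]
because $\liminf_n\bigl(\one_{A_n}V(\eta,Y)\bigr)=V^+\one_{A^*}-V^-\one_{A^{**}}$ rather than $\one_{A^*}V(\eta,Y)$. This is, however, precisely the argument the paper gives: it adds $|V(\eta,Y)|$ to make the integrand nonnegative, invokes Fatou, and asserts $s_{A_0}(\eta)\le\liminf_n s_{A_n}(\eta)$. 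So the gap you flag is not a detour around the paper's route---it \emph{is} the paper's route, and your objection to it is correct.

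Unfortunately your modularity-and-subsequence repair cannot be completed either, because the lemma as stated is false. Take $\Omega=[0,1]$ with Lebesgue measure, $\A$ the full Borel $\sigma$-algebra, $Y(\omega)=\omega$, and $V(\eta,y)=\eta+2y-1$ (increasing and continuous in $\eta$, so a valid identification function). At $\eta=0$ one has $s_A(0)=\int_A(2\omega-1)\,d\omega$, uniquely minimized at $A=[0,\tfrac12]$ with $m=-\tfrac14$. Enumerate the dyadic subintervals of $[0,\tfrac12]$ as $(I_n)_{n\in\N}$ so that $|I_n|\to0$ and every point of $[0,\tfrac12)$ lies in infinitely many $I_n$, and set $A_n=[0,\tfrac12]\setminus I_n$. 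Then $s_{A_n}(0)\to m$, yet $\liminf_n A_n=\emptyset$ and $s_\emptyset(0)=0\neq m$. This is exactly the residual-set obstruction you anticipated: along any subsequence, $\bigcap_k A_{n_k}$ need not be contained in $A^*$, and its contribution to $s$ need not vanish. Your steps~1 and~2 are correct (and step~2, producing a minimizing sequence of supersets of $A^*$ via modularity, is a genuinely useful observation), but step~3 is chasing a conclusion that does not hold. The paper only needs $M(\eta)\ne\emptyset$, which \emph{is} true---one can take $A=\{\E(V(\eta,Y)\mid\overline{\A})<0\}\in\A$ and check via the defining properties of Brunk's conditional expectation that it minimizes $s_A(\eta)$---but the specific minimizer $\liminf_n A_n$ cannot be salvaged.
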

   
We construct a \emph{minimizing path}, that is a sequence $(A_\eta)_{\eta \in \R} \subseteq \A$ with $A_\eta \in M(\eta)$ for all $\eta \in \R$, and will define the conditional functional of $Y$ given $\A$ as the inverse of the path. Firstly, we show that we can choose our path to be decreasing using Zorn's Lemma. 

\begin{prop}\label{prop:minimizing_path_is_decreasing}
There exists a decreasing minimizing path, i.e.\ there exists a sequence $(A_\eta)_{\eta \in \R}$ with $A_\eta \in M(\eta)$ for any $\eta \in \R$ such that $A_{\eta'} \subseteq A_\eta$ for any $\eta < \eta'$.
\end{prop}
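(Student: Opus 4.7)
The plan is to prove a key monotonicity principle for $M(\cdot)$, upgrade it to closure under countable operations, and then apply Zorn's lemma to the poset of partial decreasing minimizing paths; separability of $\R$ keeps all set operations within the $\sigma$-lattice $\A$.

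The core step is a \emph{crossing lemma}: if $\eta<\eta'$, $A\in M(\eta)$, and $A'\in M(\eta')$, then $A\cup A'\in M(\eta)$ and $A\cap A'\in M(\eta')$. Optimality of $A$ at $\eta$ gives $\E(\one_{A'\setminus A}V(\eta,Y))=s_{A\cup A'}(\eta)-s_A(\eta)\geq 0$, while optimality of $A'$ at $\eta'$ gives $\E(\one_{A'\setminus A}V(\eta',Y))=s_{A'}(\eta')-s_{A\cap A'}(\eta')\leq 0$. Since $V(\cdot,y)$ is increasing, $\E(\one_{A'\setminus A}V(\eta,Y))\leq \E(\one_{A'\setminus A}V(\eta',Y))$, so both expectations collapse to $0$ and the claim follows. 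Specialising to $\eta=\eta'$ (or using the identity $s_{A\cup B}+s_{A\cap B}=s_A+s_B$) shows that $M(\eta)$ is closed under finite unions and intersections, and a dominated-convergence argument along partial unions/intersections extends this to closure under countable unions and intersections.

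I then apply Zorn's lemma to the set $\mathcal{Z}$ of pairs $(D,\phi)$ with $D\subseteq \R$ and $\phi\colon D\to \A$ decreasing and satisfying $\phi(\eta)\in M(\eta)$, ordered by extension; chains have unions as upper bounds, so a maximal element $(D^*,\phi^*)$ exists. Assume for contradiction that $\eta_0\in \R\setminus D^*$, and set $D^-=D^*\cap(-\infty,\eta_0)$ and $D^+=D^*\cap(\eta_0,\infty)$. If $D^-$ has a maximum, define $C^-:=\phi^*(\max D^-)$; otherwise, by separability of $\R$, pick $\eta_n\in D^-$ with $\eta_n\uparrow \sup D^-$ and set $C^-:=\bigcap_n \phi^*(\eta_n)$, which lies in $\A$ by the $\sigma$-lattice axioms. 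Analogously define $C^+\in\A$ from $D^+$ (using $\min D^+$ or a coinitial sequence $\eta'_n\downarrow \inf D^+$), with the conventions $C^-:=\Omega$ if $D^-=\emptyset$ and $C^+:=\emptyset$ if $D^+=\emptyset$. Monotonicity of $\phi^*$ guarantees $C^+\subseteq \phi^*(\eta')\subseteq \phi^*(\eta)\subseteq C^-$ for every $\eta\in D^-$ and $\eta'\in D^+$, so in particular $C^+\subseteq C^-$.

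Fix any $B\in M(\eta_0)$ (given by Lemma \ref{lem:M_eta_is_non_empty}) and define $\phi^*(\eta_0):=(B\cup C^+)\cap C^-$. The displayed inclusions force $C^+\subseteq \phi^*(\eta_0)\subseteq C^-$, so the extended $\phi^*$ stays decreasing on $D^*\cup\{\eta_0\}$. Membership of $\phi^*(\eta_0)$ in $M(\eta_0)$ follows from two successive applications of the crossing lemma together with countable closure: each $B\cup \phi^*(\eta'_n)$ lies in $M(\eta_0)$, hence so does $B\cup C^+$ by countable-union closure; intersecting with each $\phi^*(\eta_n)\in M(\eta_n)$ stays in $M(\eta_0)$ by the crossing lemma, and countable-intersection closure then places $(B\cup C^+)\cap C^-$ in $M(\eta_0)$. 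This contradicts the maximality of $(D^*,\phi^*)$, so $D^*=\R$. The main obstacle throughout is that $\A$ is only a $\sigma$-lattice: the naive candidates $\bigcap_{\eta\in D^-}\phi^*(\eta)$ and $\bigcup_{\eta'\in D^+}\phi^*(\eta')$ need not belong to $\A$, and the separability of $\R$ is precisely what allows them to be replaced by the countable versions $C^\pm$.
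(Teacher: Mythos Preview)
Your proof is correct and follows the same strategy as the paper: Zorn's lemma on partial decreasing minimizing paths, combined with the crossing lemma (the paper's Lemma~\ref{lem:minimizing_path_is_decreasing}) and countable closure of $M(\eta)$. One slip: the chain $C^+\subseteq \phi^*(\eta')\subseteq \phi^*(\eta)\subseteq C^-$ is reversed; the correct inclusions are $\phi^*(\eta')\subseteq C^+\subseteq C^-\subseteq \phi^*(\eta)$, and these are what you actually need (together with $C^+\subseteq\phi^*(\eta_0)\subseteq C^-$) to verify that the extended $\phi^*$ stays decreasing.

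Your organisation is in fact slightly tidier than the paper's, which splits the maximal-domain argument into two steps (first showing the domain is closed via a dominated-convergence argument with $\eta_n\to\eta_0$, then ruling out open gaps). You handle both situations uniformly through $C^\pm$, and your use of the crossing lemma to move each $\phi^*(\eta_n)\in M(\eta_n)$ into $M(\eta_0)$ \emph{before} taking the countable intersection cleanly reduces everything to closure of $M(\eta_0)$ at a fixed threshold, bypassing the paper's limit argument in which both the set and the threshold vary simultaneously.
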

For functionals with strictly increasing identification functions, any minimizing path will automatically be almost surely decreasing as shown in the next section.  

\begin{defn}\label{def:conditional_functional}
Let $(A_\eta)_{\eta \in \R} \subseteq \A$ be a decreasing minimizing path. We define the \emph{conditional functional $T$ of $Y$ given $\A$} as
\begin{equation}\label{eq:def_conditional_functional}
    T(Y \mid \A)(\omega) =\sup\{\eta \in \R \mid \omega \in A_\eta\}, \quad \omega \in \Omega.
\end{equation}
\end{defn}

\begin{rem}\label{rem:implication_of_decreasing_path}
Since we consider a decreasing minimizing path $(A_\eta)_{\eta \in \R}$, we may write $T(Y \mid \A)$ equivalently by the inf-representation
\begin{equation*}
    T(Y \mid \A)(\omega) = \inf\{\eta \in \R \mid \omega \notin A_\eta\}, \quad \omega \in \Omega.
\end{equation*}
\end{rem}
\begin{rem}\label{rem:conditional_functional_may_be_defined_over_dense_Q}
    We note that 
    \begin{equation}\label{eq:def_conditional_functional_with_Q}
    T(Y \mid \A)(\omega) =\sup\{\eta \in \Q \mid \omega \in A_\eta\}
\end{equation}
for all $\omega \in \Omega$ since $\Q$ lies dense in $\R$. Using equation \eqref{eq:def_conditional_functional_with_Q} instead of \eqref{eq:def_conditional_functional} will be useful for treating null sets and show measurability conditions. 
\end{rem}

Some words of caution are necessary here. We defined $T(Y\mid \A)$ as the inverse of a particular minimizing path and hence $T(Y\mid \A)$ will be path dependent in general. This is no surprise if we consider that e.g.\ the quantile of a distribution is in general not unique either. In the next section, we will show that the conditional functional is almost surely unique if the corresponding identification function is strictly increasing in its first argument as it is the case for the mean functional. In Section \ref{Subsec:cond_quantiles}, we study conditional quantiles and show that we may obtain the lower quantile if we choose the minimizing path accordingly. In a first step, it is important to note that $T(Y\mid \A)$ as defined in \eqref{eq:def_conditional_functional} is actually a solution to our problem.

\begin{prop}\label{prop:conditional_functional_is_valid}
    The conditional functional $T(Y \mid \A)$ is $\A$-measurable and minimizes \eqref{eq:Vmin} over all $\A$-measurable random variables $X$ for all $\eta \in \R$. If the elementary loss functions $S_\eta$ defined at \eqref{eq:Seta} are nonnegative, then the expectation in \eqref{eq:minimizing_criterion_conditional_functionals} always exists, and $T(Y \mid \A)$ minimizes \eqref{eq:minimizing_criterion_conditional_functionals} over all $\A$-measurable random variables $X$.
\end{prop}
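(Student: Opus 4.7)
The plan is to verify the three claims in sequence; write $T := T(Y \mid \A)$ for short. The decreasing property of the path $(A_\eta)_{\eta \in \R}$ combined with the density of $\Q$ in $\R$ gives
\begin{equation*}
\{T > a\} = \bigcup_{\eta \in \Q,\, \eta > a} A_\eta \eqand \{T \geq \eta\} = \bigcap_{\eta' \in \Q,\, \eta' < \eta} A_{\eta'}
\end{equation*}
for every $a, \eta \in \R$. The first identity displays the strict level sets of $T$ as countable unions in $\A$ and hence settles $\A$-measurability; the second identity gives $\{T \geq \eta\} \in \A$, which is the starting point for the minimization claim.

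For the minimization of \eqref{eq:Vmin} at a fixed $\eta \in \R$, note that any $\A$-measurable $X$ satisfies $\{X \geq \eta\} = \bigcap_n \{X > \eta - 1/n\} \in \A$, so it suffices to prove $\{T \geq \eta\} \in M(\eta)$. Since $\{T \geq \eta\} \in \A$, the bound $v_{\{T \geq \eta\}}(\eta) \geq v_{A_\eta}(\eta)$ is automatic. For the reverse inequality, I would take rationals $\eta' \uparrow \eta$ and invoke dominated convergence twice, with dominator $|V(\eta, Y)| + |V(\eta - 1, Y)|$ (integrable by monotonicity of $V(\cdot, y)$ and the standing finiteness of $\E V(\zeta, Y)$ for all $\zeta$). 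First, $\one_{A_{\eta'}} \downarrow \one_{\{T \geq \eta\}}$ yields $v_{A_{\eta'}}(\eta) \to v_{\{T \geq \eta\}}(\eta)$. Second, left-continuity of $V(\cdot, y)$ produces both $v_A(\eta') \to v_A(\eta)$ for each fixed $A$ and the vanishing discrepancy $v_{A_{\eta'}}(\eta) - v_{A_{\eta'}}(\eta') \to 0$. Combining these two facts with the minimality inequality $v_{A_{\eta'}}(\eta') \leq v_{A_\eta}(\eta')$ (valid because $A_{\eta'} \in M(\eta')$) and passing to $\limsup$ gives $v_{\{T \geq \eta\}}(\eta) \leq v_{A_\eta}(\eta)$, as required.

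For the scoring-rule claim, the key bridge is the pointwise identity
\begin{equation*}
S_\eta(x, y) = V(\eta, y) \bigl(\one_{\{x \geq \eta\}} - \one_{\{y \geq \eta\}}\bigr),
\end{equation*}
which follows from a short case analysis on the relative order of $\eta$, $x$, $y$. Taking expectations yields $\E S_\eta(X, Y) = v_{\{X \geq \eta\}}(\eta) - v_{\{Y \geq \eta\}}(\eta)$, so minimizing in $X$ is equivalent to minimizing $v_{\{X \geq \eta\}}(\eta)$, and this is precisely what the previous paragraph accomplished. When $S_\eta \geq 0$, Tonelli's theorem gives $\E S(X, Y) = \int_\R \E S_\eta(X, Y) \diff H(\eta) \in [0, \infty]$ as a well-defined quantity for every $\A$-measurable $X$, and the pointwise-in-$\eta$ optimality of $T$ passes to the integral.

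The main obstacle I anticipate is the double-limit argument in the middle paragraph: two limiting processes must be juggled simultaneously, namely the decrease of $A_{\eta'}$ towards $\{T \geq \eta\}$ and the left-continuous approach of $V(\eta', y)$ to $V(\eta, y)$, and the crux is to secure a single integrable dominator valid on an entire left-neighbourhood of $\eta$ out of the standing integrability hypothesis alone.
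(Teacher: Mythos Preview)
Your proposal is correct and shares the overall skeleton with the paper: the countable-union representation $\{T>a\}=\bigcup_{\eta\in\Q,\,\eta>a}A_\eta$ for $\A$-measurability, and Tonelli/Fubini for passing from the elementary scores $S_\eta$ to the integrated score $S$. The genuine difference is in the minimization step. The paper argues by first claiming $\prob(A_\eta\,\triangle\,\{T\geq\eta\})=0$ for every $\eta$, which would immediately yield $\{T\geq\eta\}\in M(\eta)$. You instead prove $\{T\geq\eta\}\in M(\eta)$ directly via the double-limit argument: $A_{\eta'}\downarrow\{T\geq\eta\}$ as rational $\eta'\uparrow\eta$, combined with left-continuity of $V(\cdot,y)$ and the minimality $v_{A_{\eta'}}(\eta')\leq v_{A_\eta}(\eta')$. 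Your route is in fact more robust: for a general decreasing minimizing path one may have $A_\eta\subsetneq\bigcap_{\eta'<\eta}A_{\eta'}=\{T\geq\eta\}$ on a set of positive probability (whenever $M(\eta)$ contains several sets that are not almost-sure equal, as happens for the quantile functional), so the paper's intermediate almost-sure equality can fail even though the conclusion $\{T\geq\eta\}\in M(\eta)$ remains true. Your dominator $|V(\eta-1,Y)|+|V(\eta,Y)|$ handles both limits simultaneously, since monotonicity of $V(\cdot,y)$ traps $V(\eta',y)$ between $V(\eta-1,y)$ and $V(\eta,y)$ for $\eta'\in[\eta-1,\eta]$.
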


\begin{rem}\label{rem:conditional_functionals_for_sigma_algebras}
    Since $\s$-algebras are also $\s$-lattices, the proposed construction of conditional functionals  also applies if $\A\subseteq \F$ is a $\s$-algebra. In this case, Proposition \ref{prop:conditional_functional_is_valid} shows that our construction provides an alternative characterization of the usual notion of conditional functionals defined as functionals of the conditional distribution.
\end{rem}

\begin{rem}
The condition of nonnegative elementary loss funcitons is satisfied for all interesting identification functions we can think of, so in particular for the mean and for quantiles. 
\end{rem}

Finally assume that the $\s$-lattice, respectively the $\s$-algebra, $\A$ is generated by some random element $\xi$ that lies in an ordered metric space $(\XX, d,\preceq)$, that is $\A=\LL(\xi)$ or $\A=\s(\xi)$. By Proposition \ref{prop:factorization_for_random_elements} and the classical factorization result, we know that there exists an increasing Borel measurable function $f:\XX \to \R$ and a Borel measurable function $g:\XX \to \R$ such that 
\begin{equation}\label{eq:Optimal_solution_representation}
        \E S(h(\xi), Y)
    \end{equation}
is minimized by $f$ over the class of all Borel measurable and increasing functions $h:\XX \to \R$, and minimized by $g$ over all Borel measurable functions $h:\XX \to \R$. Representation \eqref{eq:Optimal_solution_representation} of the underlying minimizing problem shows the connection to the work of \cite{Optimal} more clearly, where the authors minimize the empirical version of \eqref{eq:Optimal_solution_representation} over all increasing functions $h$.

\subsection{Strictly increasing identification functions and conditional expectations}\label{Subsec:cond_expectations}
Throughout this section, we assume that the identification function $V$ is strictly increasing in its first component. Under this assumption, the conditional functional is almost surely unique and we use this fact to show the part of Theorem \ref{thm:universality_of_ICL} concerning the mean functional. Let $(A_\eta)_{\eta \in \R}$ be any minimizing path with $A_\eta \in M(\eta)$ for all $\eta \in \R$. 

\begin{lem}\label{lem:A_eta_is_automatically_decreasing_for_strictly_increasing_identification_functions} The sequence $(A_\eta)_{\eta \in \R}$ is almost surely decreasing, i.e.\ $\prob(A_{\eta'} \setminus A_{\eta})=0$ for $\eta < \eta'$.
\end{lem}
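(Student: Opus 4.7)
The plan is to exploit the $\sigma$-lattice closure of $\A$ under finite unions and intersections, combined with the minimizing property of $A_\eta$ and $A_{\eta'}$ at their respective thresholds. Fix $\eta<\eta'$ and set $D=A_{\eta'}\setminus A_\eta$. The goal is to produce two opposing inequalities involving $\E(\one_D V(\eta,Y))$ and $\E(\one_D V(\eta',Y))$, and then rule out strict positivity of $\prob(D)$ by the strict monotonicity of $V$.

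For the first step, form the ``competitor'' sets $B=A_\eta \cup A_{\eta'}$ and $C=A_\eta \cap A_{\eta'}$; both lie in $\A$ because $\A$ is closed under finite unions and intersections. Because $A_\eta \in M(\eta)$, we have $s_{A_\eta}(\eta)\le s_B(\eta)$, and a direct computation gives
\begin{equation*}
0 \le s_B(\eta) - s_{A_\eta}(\eta) = \E\bigl(\one_{B\setminus A_\eta}\,V(\eta,Y)\bigr) = \E\bigl(\one_D\,V(\eta,Y)\bigr).
\end{equation*}
Analogously, because $A_{\eta'}\in M(\eta')$, we have $s_{A_{\eta'}}(\eta')\le s_C(\eta')$, yielding
\begin{equation*}
0 \ge s_{A_{\eta'}}(\eta') - s_C(\eta') = \E\bigl(\one_{A_{\eta'}\setminus C}\,V(\eta',Y)\bigr) = \E\bigl(\one_D\,V(\eta',Y)\bigr).
\end{equation*}
(All expectations are finite by the standing assumption that $\E(V(\eta,Y))$ is finite for every $\eta\in\R$.)

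Combining the two displays gives $\E\bigl(\one_D(V(\eta',Y)-V(\eta,Y))\bigr)\le 0$. Since $V$ is strictly increasing in its first argument and $\eta<\eta'$, the integrand $\one_D(V(\eta',Y)-V(\eta,Y))$ is nonnegative everywhere and strictly positive on $D$. The only way such an integral can be nonpositive is for $\one_D$ to vanish almost surely, i.e., $\prob(D)=\prob(A_{\eta'}\setminus A_\eta)=0$, which is exactly the desired conclusion.

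I do not anticipate a serious obstacle here: the argument is a clean ``exchange'' proof of the kind familiar from isotonic/lattice optimization, and the closure of $\A$ under finite unions and intersections is precisely what lets $B$ and $C$ serve as admissible competitors. The only point that needs a mild word of care is that strict monotonicity of $V$ must be used pointwise in $y$ (not merely in expectation) to deduce $\prob(D)=0$ from the expectation inequality; this is why the hypothesis ``strictly increasing in the first component'' is essential and would fail, for instance, for the quantile identification function \eqref{eq:identification_function_quantile}.
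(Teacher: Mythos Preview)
Your proof is correct and follows essentially the same route as the paper: both arguments compare $A_\eta$ and $A_{\eta'}$ against the competitors $A_\eta\cup A_{\eta'}$ and $A_\eta\cap A_{\eta'}$ to obtain $\E(\one_D V(\eta,Y))\ge 0\ge \E(\one_D V(\eta',Y))$ for $D=A_{\eta'}\setminus A_\eta$, and then use strict monotonicity of $V$ in its first argument to force $\prob(D)=0$. The only cosmetic difference is that the paper packages the two competitor inequalities into Lemma~\ref{lem:minimizing_path_is_decreasing} (which additionally records $v_D(\eta'')=0$ for all $\eta''\in[\eta,\eta']$) before invoking strict monotonicity, whereas you derive them inline and pass directly to the difference.
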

\begin{prop}\label{prop:conditional_functional_does_not_depend_on_path_if_the_identification_function_is_strictly_increasing}
    The conditional functional $T(Y \mid \A)$ as defined in \eqref{eq:def_conditional_functional} is almost surely unique. That is, for any minimizing paths $(A_\eta)_{\eta \in \R},(\tilde{A}_\eta)_{\eta \in \R} \subseteq \A$ with corresponding conditional functionals $X$ and $\tilde{X}$ defined at \eqref{eq:def_conditional_functional}, it holds that $X=\tilde{X}$ almost surely.
\end{prop}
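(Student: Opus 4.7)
The plan is to show $X = \tilde X$ almost surely by exploiting Lemma \ref{lem:A_eta_is_automatically_decreasing_for_strictly_increasing_identification_functions} applied to a cleverly mixed minimizing path. The sup-representation of Remark \ref{rem:conditional_functional_may_be_defined_over_dense_Q} reduces everything to a countable setup, and strict monotonicity of $V$ provides the almost-sure decrease we need for any minimizing path, including those obtained by interchanging $(A_\eta)$ and $(\tilde A_\eta)$ at isolated levels.

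First I would observe that if $X(\omega) > \tilde X(\omega)$, then there exists $q \in \Q$ strictly between them. Since $\tilde X(\omega) = \sup\{\eta \in \Q \mid \omega \in \tilde A_\eta\} < q$ and $q$ itself is rational, necessarily $\omega \notin \tilde A_q$. Analogously, $X(\omega) > q$ forces some rational $\eta > q$ with $\omega \in A_\eta$. Hence
\begin{equation*}
\{X > \tilde X\} \subseteq \bigcup_{\substack{q, \eta \in \Q \\ q < \eta}} \big(A_\eta \setminus \tilde A_q\big),
\end{equation*}
which is a countable union. The symmetric inclusion for $\{X < \tilde X\}$ is identical with the two paths swapped.

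The crucial step is then to show $\prob(A_\eta \setminus \tilde A_q) = 0$ whenever $q < \eta$ are rational. To do this I would define the \emph{mixed path} $(D_s)_{s \in \R}$ by $D_q = \tilde A_q$ and $D_s = A_s$ for $s \neq q$. Since both $(A_s)$ and $(\tilde A_s)$ are minimizing paths, every $D_s \in M(s)$, so $(D_s)$ is itself a minimizing path (though not constructed to be decreasing). Lemma \ref{lem:A_eta_is_automatically_decreasing_for_strictly_increasing_identification_functions} then asserts that $(D_s)$ is almost surely decreasing, and specializing to $s = q < \eta$ yields precisely $\prob(A_\eta \setminus \tilde A_q) = \prob(D_\eta \setminus D_q) = 0$. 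The symmetric claim $\prob(\tilde A_\eta \setminus A_q) = 0$ follows by swapping the roles of the paths. Combined with countable subadditivity in the display above, we conclude $\prob(X \neq \tilde X) = 0$.

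The whole argument is short; the only potential obstacle is conceptual, namely the realization that mixing two minimizing paths at a single level still produces a minimizing path to which Lemma \ref{lem:A_eta_is_automatically_decreasing_for_strictly_increasing_identification_functions} applies, and that the countable sup representation of Remark \ref{rem:conditional_functional_may_be_defined_over_dense_Q} controls the exceptional set by countably many null sets indexed by pairs of rationals. Strict monotonicity of $V$ enters the proof only through its use in Lemma \ref{lem:A_eta_is_automatically_decreasing_for_strictly_increasing_identification_functions}.
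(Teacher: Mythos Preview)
Your proof is correct and rests on the same core device as the paper's: splice the two minimizing paths together and invoke Lemma \ref{lem:A_eta_is_automatically_decreasing_for_strictly_increasing_identification_functions} on the hybrid, then reduce to countably many rational levels via Remark \ref{rem:conditional_functional_may_be_defined_over_dense_Q}. The execution, however, is different and in fact more direct. The paper first proves an auxiliary result (Lemma \ref{lem:auxilary_lem_to_show_uniqueness_of_cond_functional_for_strictly_increasing_identification_functions}) that the sup and inf representations of $T(Y\mid\A)$ agree almost surely, then builds a one-parameter family of mixed paths $(\hat A^{\eta_0}_\eta)$ that switch from $(A_\eta)$ to $(\tilde A_\eta)$ at the threshold $\eta_0$, and finally derives a contradiction from $X(\omega)<\tilde X(\omega)$. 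You bypass the auxiliary lemma and the contradiction entirely: the inclusion $\{X>\tilde X\}\subseteq\bigcup_{q<\eta,\;q,\eta\in\Q}(A_\eta\setminus\tilde A_q)$ together with $\prob(A_\eta\setminus\tilde A_q)=0$ finishes the argument by countable subadditivity. Your single-level swap $D_q=\tilde A_q$, $D_s=A_s$ otherwise, is enough because Lemma \ref{lem:A_eta_is_automatically_decreasing_for_strictly_increasing_identification_functions} (via Lemma \ref{lem:minimizing_path_is_decreasing}) really only uses $D_q\in M(q)$ and $D_\eta\in M(\eta)$, not any global structure of the path. What the paper's route buys is the sup$=$inf representation as a standalone fact; what yours buys is brevity and a transparent bound on the exceptional set.
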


For the remainder of this section we focus on the mean functional with identification function $V(\eta,y)=\eta-y$. By Lemma \ref{lem:A_eta_is_automatically_decreasing_for_strictly_increasing_identification_functions}, any minimizing path $(A_\eta)_{\eta \in \R}$ is almost surely decreasing and by Proposition \ref{prop:conditional_functional_does_not_depend_on_path_if_the_identification_function_is_strictly_increasing}, the conditional expectation $\E(Y \mid \A)$ defined by \eqref{eq:def_conditional_functional} is almost surely unique. For any square integrable random variable $Y$, \cite{Brunk_1965} defined the conditional expectation given $\A$ as its projection onto the closed convex cone of square integrable and $\A$-measurable random variables, that is as the minimizer of $\E(X-Y)^2$ over all $\A$-measurable random variables $X \in L_2$. Moreover, \citet[Theorem 3.2.]{Brunk_1965} shows that $\E(Y \mid \A)$ minimizes $\E S(X,Y)$ over all over all $\A$-measurable random variables $X \in L_2$ for any Bregman scoring function $S$, where the class of Bregman scoring functions essentially comprises all consistent scoring functions for the mean functional, see \cite{Optimal}. These considerations show that Definition \ref{defn:cond_expectations} coincides with Definition \ref{def:conditional_functional} for the mean functional for square integrable random variables. For $X \in L_1 \setminus L_2$, we may approximate $X$ by $(X_n)_{n \in \N}\subseteq L_2$ as it is done in the proof of Theorem 2 by \cite{Brunk_1963}, to show that the definitions also correspond for integrable random variables whenever the given expectations exist. 

We conclude this section by showing the first part of Theorem \ref{thm:universality_of_ICL}.

\begin{proof}[Proof of the first part of Theorem \ref{thm:universality_of_ICL}] 
For $\eta \in \R$, consider the elementary scoring function for the mean $S_\eta^E$ as defined in \eqref{eq:elementary_loss_function_for_the_mean}. By the above considerations, it follows directly that $P_{Y \mid \A}(\cdot, (z,\infty))=\prob(Y>z \mid \A)$ minimizes $\E S_\eta(X, \one_{\{Y>z\}})$ over all ${\A}$-measurable random variables $X$ for any $z \in \R$ and thus the second claim of the first part follows by Fubini. Note that analogously one can show that for any $z\in \R$, $P_{Y \mid \A}(z)$ minimizes $\E S(X,\one_{\{Y\leq z\}})$ over all $\overline{\A}$-measurable random variables $X$. Consider now any $\s$-finite measure $H$ on $\R \times \R$. By the disintegration theorem \citep[Theorem 1.23]{Kallenberg2017}, there exists a $\s$-finite measure $P$ on $\R$ and a kernel $K$ from $(\R, \BR)$ to $(\R, \BR)$ such that $H=P \otimes K$. Hence, we may derive by Fubini
\begin{align*}
    \E \myS (F,Y)&= \E \int_{\R \times \R} S_{\eta}^E(F(z),\one_{\{Y\le z\}}) \diff H(z,\eta)\\ &= \E \int_\R \int_{\R}S_{\eta}^E(F(z),\one_{\{Y\le z\}}) K(z,\mathrm{d} \eta) \diff P(\eta)\\ &=\int_\R \int_{\R} \E S_{\eta}^E(F(z),\one_{\{Y\le z\}}) K(z,\mathrm{d} \eta) \diff P(\eta),
\end{align*}
which concludes the proof of the first part.
\end{proof}

\subsection{Conditional quantiles and quantile calibration of ICL}\label{Subsec:cond_quantiles}
Our main motivation to study conditional functionals was to show that ICL is quantile calibrated (Proposition \ref{prop:ICL_is_quantile_calibrated}) and to show universality of ICL (Theorem \ref{thm:universality_of_ICL}). Now, we turn our attention to quantiles and modify our notation slightly: For $\alpha \in (0,1)$ and $\eta \in \R$, denote by $M(\alpha, \eta)$ the family of all $A\in \A$ that minimize $\E (\one_A V^\alpha(\eta,Y))$, where $V^\alpha$ denotes the identification function for the $\alpha$-quantile as given at \eqref{eq:identification_function_quantile}. Firstly, we show that the minimizing paths may be chosen such that they additionally order with respect to $\alpha$. 
 
\begin{lem} \label{lem:existence_of_valid_rectangular_array}There exists a rectangular path 
    $(A_\eta^\alpha)^{\alpha \in (0,1)}_{\eta \in \R} \subseteq \A$ which is decreasing in $\eta$ and increasing in $\alpha$, that is for any $\alpha \in (0,1)$ and $\eta \in \R$, we have $A_\eta^\alpha \in M(\alpha, \eta)$, the sequence $(A_\eta^\alpha)_{\alpha \in (0,1)}$ is increasing and the sequence $(A_\eta^\alpha)_{\eta \in \R}$ is decreasing.
\end{lem}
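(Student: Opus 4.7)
My plan is to first establish a pooling-style monotonicity lemma across quantile levels, then assemble the rectangular path from decreasing minimizing paths indexed by the rationals, and finally extend the definition to all $\alpha \in (0,1)$ via continuity of measure.

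The key technical ingredient I would prove at the outset is the following \emph{pooling lemma}: for $\alpha_1 < \alpha_2$ in $(0,1)$, $\eta \in \R$, and $A_i \in M(\alpha_i, \eta)$ for $i = 1, 2$, one has $\prob(A_1 \setminus A_2) = 0$. Since $\A$ is closed under finite intersections and unions, $A_1 \cap A_2 \in \A$, and optimality of $A_1$ at level $\alpha_1$ rearranges (after subtracting $\E(\one_{A_1} V^{\alpha_1}(\eta,Y))$ and using $V^{\alpha_1}(\eta, y) = \one_{\{y < \eta\}} - \alpha_1$) to
\[
\prob\bigl((A_1 \setminus A_2) \cap \{Y < \eta\}\bigr) \leq \alpha_1 \prob(A_1 \setminus A_2).
\]
Symmetrically, $A_1 \cup A_2 \in \A$ and optimality of $A_2$ at level $\alpha_2$ yield
\[
\prob\bigl((A_1 \setminus A_2) \cap \{Y < \eta\}\bigr) \geq \alpha_2 \prob(A_1 \setminus A_2).
\]
Combining these two inequalities forces $(\alpha_2 - \alpha_1)\prob(A_1 \setminus A_2) \leq 0$, which gives the conclusion.

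For the construction, I would apply Proposition \ref{prop:minimizing_path_is_decreasing} to the identification function $V^q$ for each rational $q \in \Q \cap (0,1)$ in order to obtain a decreasing minimizing path $(B_\eta^q)_{\eta \in \R} \subseteq \A$, and then set
\[
A_\eta^\alpha := \bigcup_{q \in \Q \cap (0,1),\, q \leq \alpha} B_\eta^q \in \A, \quad (\alpha, \eta) \in (0,1) \times \R.
\]
Exact monotonicity in $\alpha$ is clear from the union, and decreasingness in $\eta$ is inherited from each $(B_\eta^q)_\eta$. To check $A_\eta^\alpha \in M(\alpha, \eta)$, I split into cases. When $\alpha \in \Q$, the pooling lemma yields $B_\eta^q \subseteq B_\eta^\alpha$ a.s.\ for every rational $q \leq \alpha$, so $A_\eta^\alpha = B_\eta^\alpha$ a.s., whence $A_\eta^\alpha \in M(\alpha, \eta)$. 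When $\alpha$ is irrational, I pick rationals $q_n \uparrow \alpha$; the pooling lemma makes $(B_\eta^{q_n})_n$ a.s.\ increasing, so continuity of measure gives $\prob(A_\eta^\alpha) = \lim_n \prob(B_\eta^{q_n})$ and $\prob(A_\eta^\alpha \cap \{Y < \eta\}) = \lim_n \prob(B_\eta^{q_n} \cap \{Y < \eta\})$. Writing
\[
\E(\one_{B_\eta^{q_n}} V^\alpha(\eta, Y)) = \E(\one_{B_\eta^{q_n}} V^{q_n}(\eta, Y)) + (q_n - \alpha) \prob(B_\eta^{q_n})
\]
and using that the optimal value $\alpha \mapsto \inf_{A \in \A} \E(\one_A V^\alpha(\eta, Y))$ is concave in $\alpha$ (as a pointwise infimum of affine functions) and hence continuous on $(0,1)$, the limit equals $\inf_{A \in \A} \E(\one_A V^\alpha(\eta, Y))$, so $A_\eta^\alpha \in M(\alpha, \eta)$.

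The main obstacle is establishing the pooling lemma; once it is in place, the rest of the argument is a fairly mechanical combination of the closure of $\A$ under countable unions, continuity of measure, and continuity of the optimal-value function.
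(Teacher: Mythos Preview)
Your proof is correct and in fact somewhat slicker than the paper's. Both arguments start from the same pair of inequalities obtained by comparing $A_1$ with $A_1\cap A_2$ at level $\alpha_1$ and $A_2$ with $A_1\cup A_2$ at level $\alpha_2$. The paper records these as Lemma~\ref{lem:A_eta_order_with_alpha} and draws from them only the conclusion that $A_1\cap A_2\in M(\alpha_1,\eta)$ and $A_1\cup A_2\in M(\alpha_2,\eta)$; you push one step further and exploit that $V^{\alpha_2}-V^{\alpha_1}=\alpha_1-\alpha_2$ is a strictly negative constant to deduce $\prob(A_1\setminus A_2)=0$. This stronger ``pooling lemma'' lets you bypass the paper's inductive sandwiching construction over an enumeration of the rationals: since the raw paths $(B_\eta^q)_{q\in\Q\cap(0,1)}$ are already almost surely ordered in $q$, the union $\bigcup_{q\le\alpha}B_\eta^q$ does the job directly. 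For the extension to irrational $\alpha$, the paper uses dominated convergence on $s^{\alpha_n}_{A_n}(\eta)$, while you invoke concavity (hence continuity) of the optimal-value map $\alpha\mapsto\inf_{A\in\A}\E(\one_A V^\alpha(\eta,Y))$; both arguments are equally short. One minor remark: you take the union over $q\le\alpha$ rather than $q<\alpha$; this still satisfies the statement of Lemma~\ref{lem:existence_of_valid_rectangular_array}, but note that the paper's choice of strict inequality is what later delivers the left-continuity used in Lemma~\ref{lem:cond_quantiles_define_valid_quantile_function}.
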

In the proof of Lemma \ref{lem:existence_of_valid_rectangular_array}, we first construct a rectangular array $(\hat{A}^{\alpha}_\eta)$ for $\eta \in \R$ and $\alpha \in (0,1)\cap \Q$ with the given monotonicity constraints and extend it in a second step to a rectangular array 
\begin{equation*}\label{eq:particular_extension_of_rectangular_arry_text}
        A_\eta^\alpha = \bigcup_{\substack{\beta \in (0,1)\cap \Q: \beta < \alpha }} \hat{A}_\eta^\beta, \quad \alpha \in (0,1), \eta \in \R. 
    \end{equation*}

In the next lemma, we show that this particular extension implies that the resulting $\alpha$-quantiles are left-continuous. If we were interested in upper-quantiles rather than lower-quantiles we could extend $(\hat{A}^{\alpha}_\eta)$ by $A_\eta^\alpha = \bigcap_{\beta \in (\alpha,1)\cap \Q} \hat{A}_\eta^\beta$, to obtain right-continuous quantiles. 
\begin{lem}\label{lem:cond_quantiles_define_valid_quantile_function}
Consider a rectangular path $(A_\eta^\alpha)^{\alpha \in (0,1)}_{\eta \in \R} \subseteq \A$ with the monotonicity constraints as given in Lemma \ref{lem:existence_of_valid_rectangular_array} which is left-continuous, that is for any $\alpha \in (0,1)$ 
\begin{equation*}
 A_\eta^\alpha = \bigcup_{\beta \in (0,1): \beta < \alpha} A_\eta^\beta.
\end{equation*}
The corresponding conditional quantiles defined by \eqref{eq:def_conditional_functional}, then define a random element $F\in \GG_\A$ given by
\begin{equation*}
F^{-1}(\alpha)= q_\alpha(Y\mid \A), \quad \alpha \in (0,1).
\end{equation*}
\end{lem}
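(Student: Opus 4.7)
The plan is to exploit the standard duality between cumulative distribution functions and their left-continuous generalized inverses \citep{Generalized_inverses}: I will verify that for each $\omega$ the map $\alpha \mapsto q_\alpha(Y\mid \A)(\omega)$ is increasing and left-continuous on $(0,1)$, and then define $F(\omega, \cdot)$ to be the corresponding cdf recovered by inversion. Measurability of $F$ and the claim $F \in \GG_\A$ will then reduce directly to the measurability of the individual conditional quantiles, which is already supplied by Proposition \ref{prop:conditional_functional_is_valid}.

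First, monotonicity in $\alpha$ is immediate from the rectangular path being increasing in $\alpha$: for $\alpha < \alpha'$ the set $\{\eta \in \R : \omega \in A_\eta^\alpha\}$ is contained in $\{\eta \in \R : \omega \in A_\eta^{\alpha'}\}$, so $q_\alpha(Y\mid \A)(\omega) \leq q_{\alpha'}(Y\mid \A)(\omega)$. For left-continuity, fix $\alpha \in (0,1)$ and take $\beta_n \uparrow \alpha$; the monotonicity inequality gives $\limsup_n q_{\beta_n}(Y\mid \A)(\omega) \leq q_\alpha(Y\mid \A)(\omega)$. For the reverse direction, if $\omega \in A_\eta^\alpha = \bigcup_{\beta < \alpha} A_\eta^\beta$ then $\omega \in A_\eta^\beta$ for some $\beta < \alpha$; since eventually $\beta_n \geq \beta$ and the path is increasing in $\alpha$, $\omega \in A_\eta^{\beta_n}$ for all large $n$, so $\liminf_n q_{\beta_n}(Y\mid \A)(\omega) \geq \eta$. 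Taking the supremum over such $\eta$ yields $\liminf_n q_{\beta_n}(Y\mid \A)(\omega) \geq q_\alpha(Y\mid \A)(\omega)$.

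Next, define
\[
F(\omega, z) = \sup\{\alpha \in (0,1) : q_\alpha(Y\mid \A)(\omega) \leq z\}, \quad z \in \R,
\]
with the convention $\sup \emptyset = 0$. The standard generalized-inverse theory applied to the increasing, left-continuous function $\alpha \mapsto q_\alpha(Y\mid \A)(\omega)$ shows that $F(\omega, \cdot)$ is increasing, right-continuous and $[0,1]$-valued, with $F(\omega,\cdot)^{-1}(\alpha) = q_\alpha(Y\mid \A)(\omega)$ for every $\alpha \in (0,1)$. For $F(\omega, \cdot)$ to be a proper probability measure on $\R$, one needs the tail limits $F(\omega,-\infty)=0$ and $F(\omega,+\infty)=1$, equivalently the a.s.\ finiteness of $q_\alpha(Y\mid \A)$ for every $\alpha \in (0,1)$; this is obtained by inspecting the minimizing criterion \eqref{eq:def_s_A} with the quantile identification function \eqref{eq:identification_function_quantile}: for $\eta$ sufficiently large the integrand $V^\alpha(\eta, Y)$ converges a.s.\ to $1-\alpha > 0$ and any minimizer $A_\eta^\alpha \in M(\alpha,\eta)$ therefore coincides almost surely with $\emptyset$, while for $\eta$ sufficiently negative $V^\alpha(\eta, Y) \to -\alpha < 0$ a.s.\ and the minimizer coincides almost surely with $\Omega$, pinning $q_\alpha(Y\mid \A)$ into $\R$ modulo a null set.

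Finally, the generalized-inverse identity $F(\omega,z) \geq \alpha \iff F(\omega,\cdot)^{-1}(\alpha) \leq z$ gives
\[
\{F(\cdot, z) \geq \alpha\} = \{q_\alpha(Y\mid \A) \leq z\} \in \overline{\A} \subseteq \F
\]
by Proposition \ref{prop:conditional_functional_is_valid}, so $\omega \mapsto F(\omega, z)$ is $\F$-measurable and $F$ is a Markov kernel. The conclusion $F \in \GG_\A$ then follows from the quantile characterization of $\GG_\A$ in Remark \ref{rem:class_GG_A}, since $F^{-1}(\alpha) = q_\alpha(Y \mid \A)$ is $\A$-measurable for every $\alpha \in (0,1)$ by Proposition \ref{prop:conditional_functional_is_valid}. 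The hardest step will be the tail-finiteness argument ensuring that $F(\omega, \cdot)$ carries total mass one: the monotonicity, left-continuity and measurability steps are routine once the path has the stated properties, but verifying that the minimizing sets reduce to $\emptyset$ and $\Omega$ (up to null sets) as $\eta \to \pm\infty$ requires careful use of the definition of $M(\alpha,\eta)$ together with $Y \in \R$ a.s.
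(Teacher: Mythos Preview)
Your approach is essentially the paper's: both prove that $\alpha \mapsto q_\alpha(Y\mid\A)(\omega)$ is increasing and left-continuous directly from the monotonicity and left-continuity of the rectangular path, and then appeal to the quantile characterization of $\GG_\A$ in Remark~\ref{rem:class_GG_A}. You are in fact more thorough than the paper, which neither constructs $F$ explicitly nor verifies the tail limits or the Markov-kernel property.

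One small correction to your tail sketch: it is not true that for large fixed $\eta$ the minimizer $A_\eta^\alpha$ must coincide a.s.\ with $\emptyset$. What you actually need (and what follows cleanly) is that $\prob(A_\eta^\alpha)\to 0$: from $s_{A_\eta^\alpha}^\alpha(\eta)\le s_\emptyset^\alpha(\eta)=0$ one gets $\prob(A_\eta^\alpha,Y<\eta)\le \alpha\,\prob(A_\eta^\alpha)$, hence $(1-\alpha)\prob(A_\eta^\alpha)\le \prob(Y\ge\eta)\to 0$; since the path is decreasing in $\eta$ this yields $\prob(\bigcap_\eta A_\eta^\alpha)=0$, i.e.\ $q_\alpha(Y\mid\A)<\infty$ a.s., and the $-\infty$ side is symmetric.
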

For any $\alpha \in (0,1)$, we denote by $q_\alpha(Y \mid X)$ the lower $\alpha$-quantile of the conditional law $P_{Y \mid X}$, see e.g.\ \cite{Tomkins}. Equivalently, $q_\alpha(Y \mid X)$ results as a minimizer of $\E \QS_\alpha(Z,Y)$ over all $\s(X)$-measurable random variables $Z$ as shown by \cite{Armerin}, that is, $q_\alpha(Y \mid X)$ may equivalently be constructed as proposed in this section. To show quantile calibration of ICL, we additionally need the following lemma, which is the analogue of Lemma \ref{lem:equivalence_auto_and_iso_calibration_for_random_variables} for conditional quantiles instead of conditional expectations.

\begin{assump}\label{assump:joint_distribution}
    The joint distribution of the random vector $(X,Y)$ is absolutely continuous, or, the marginal distribution of $X$ has countable support without accumulation points, i.e.\ the support of $X$ consists of countably many isolated points. 
\end{assump}
\begin{lem}\label{lem:equivalence_for_conditional_quantiles_given_generated_sigma_lattice}
    For any random variables $X$ and $Y$ that satisfy Assumption \ref{assump:joint_distribution}, we have $q_\alpha(Y\mid X)=X$ if and only if $q_\alpha(Y\mid \LL(X))=X$.
\end{lem}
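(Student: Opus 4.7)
The plan is to use the path characterisation of $q_\alpha(Y\mid \LL(X))$ developed in Section~\ref{Sec:Conditional_Functionals}. By \eqref{eq:generated_sigma_algebra_for_random_variables}, $\LL(X)$ consists (up to $\emptyset,\Omega$) of the sets $\{X>a\}$ and $\{X\geq a\}$ for $a\in\R$ only, and conditioning on $X$ gives, for any $A\in\LL(X)$ and $\eta\in\R$,
\begin{equation*}
    \E\big(\one_A V^\alpha(\eta,Y)\big) = \E\big(\one_A(\prob(Y<\eta\mid X)-\alpha)\big).
\end{equation*}
By Definition~\ref{def:conditional_functional}, the identity $q_\alpha(Y\mid \LL(X))=X$ means that some decreasing minimising path $(A_\eta)_{\eta\in\R}\subseteq\LL(X)$ has inverse equal to $X$, and this forces $A_\eta\in\{\{X\geq\eta\},\{X>\eta\}\}$ for every $\eta\in\R$.

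For the direction \emph{$q_\alpha(Y\mid X)=X\Rightarrow q_\alpha(Y\mid \LL(X))=X$}, I would check that $A_\eta:=\{X\geq\eta\}$ defines a decreasing minimising path with inverse $X$. Writing $F_x(y)=\prob(Y\leq y\mid X=x)$, the hypothesis reads $F_x(x-)\leq\alpha\leq F_x(x)$ for $P_X$-a.e.\ $x$. Monotonicity of $F_x$ in $y$ then yields $\prob(Y<\eta\mid X=x)\geq F_x(x)\geq\alpha$ for $x<\eta$, $\prob(Y<\eta\mid X=x)\leq F_x(x-)\leq\alpha$ for $x>\eta$, and $\prob(Y<\eta\mid X=\eta)=F_\eta(\eta-)\leq\alpha$ on the (possibly null) atom $\{X=\eta\}$. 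Thus passing from $\{X\geq\eta\}$ to $\{X\geq a\}$ with $a>\eta$ removes the set $\{\eta\leq X<a\}$ where the integrand is nonpositive, while passing to $\{X\geq a\}$ with $a<\eta$ adds the set $\{a\leq X<\eta\}$ where the integrand is nonnegative; both moves weakly increase the objective, and the analogous comparisons with $\{X>a\}$ go through identically, so $\{X\geq\eta\}\in M(\alpha,\eta)$.

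For the reverse direction, $q_\alpha(Y\mid \LL(X))=X$ gives $A_\eta\in\{\{X\geq\eta\},\{X>\eta\}\}$, and minimality of $A_\eta$ against the competitors $\{X\geq a\}$ for $a>\eta$ and $a<\eta$ yields
\begin{align*}
    \int_{[\eta,a)}\big(\prob(Y<\eta\mid X=x)-\alpha\big)\diff P_X(x)&\leq 0 \eqforall a>\eta,\\
    \int_{[a,\eta)}\big(\prob(Y<\eta\mid X=x)-\alpha\big)\diff P_X(x)&\geq 0 \eqforall a<\eta,
\end{align*}
where $P_X$ denotes the law of $X$. In the discrete branch of Assumption~\ref{assump:joint_distribution}, testing $\eta\in(x_{n-1},x_n)$ with $a=x_{n+1}$ and letting $\eta\uparrow x_n$ gives $\prob(Y<x_n\mid X=x_n)\leq\alpha$, while testing $\eta\in(x_n,x_{n+1})$ with $a=x_{n-1}$ and letting $\eta\downarrow x_n$ gives $\prob(Y\leq x_n\mid X=x_n)\geq\alpha$, so $q_\alpha(Y\mid X=x_n)=x_n$. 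In the absolutely continuous branch, I would divide the two inequalities by $P_X([\eta,a))$ and $P_X([a,\eta))$ respectively, let $a\to\eta$, and invoke one-sided Lebesgue differentiation for $P_X$ together with joint absolute continuity (which makes $\prob(Y=\eta\mid X=\eta)=0$) to obtain $\prob(Y<\eta\mid X=\eta)=\prob(Y\leq\eta\mid X=\eta)=\alpha$ for $P_X$-a.e.\ $\eta$, hence $q_\alpha(Y\mid X)=X$.

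The main obstacle is the reverse direction in the absolutely continuous case. The averaged inequalities only encode the pointwise quantile identity through a one-sided differentiation argument, and the $P_X$-null set of bad Lebesgue points a priori depends on the parameter $\eta$; a careful proof would first collect the pointwise identity for $\eta\in\Q$ and then extend by left-continuity of $\eta\mapsto\prob(Y<\eta\mid X=x)$. This is precisely the role of Assumption~\ref{assump:joint_distribution}: absolute continuity supplies the differentiation framework and eliminates conditional atoms at $\eta=X$, while the isolated-atom structure of the discrete branch obviates any limiting argument. Without either branch, the ambiguity of $A_\eta$ at the boundary $\eta=X$, combined with the absence of a suitable differentiation theorem, appears to block the recovery of the pointwise quantile identity, which matches the authors' comment after Proposition~\ref{prop:ICL_is_quantile_calibrated}.
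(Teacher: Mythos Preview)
Your proposal is essentially correct and, for the reverse implication, runs parallel to the paper's argument: both exploit \eqref{eq:generated_sigma_algebra_for_random_variables} to reduce the minimising path to sets of the form $\{X\geq\eta\}$ or $\{X>\eta\}$, and then compare against neighbouring competitors to recover the pointwise quantile identity. Two small remarks on that direction. In the discrete branch, your second test with $a=x_{n-1}$ makes the interval $[a,\eta)$ contain \emph{two} atoms $x_{n-1}$ and $x_n$, so the inequality no longer isolates $x_n$; you should take $a=x_n$ (or compare with $\{X>x_{n-1}\}$ so the integration domain becomes the open interval $(x_{n-1},\eta)$), which is exactly what the paper does. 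In the absolutely continuous branch, your Lebesgue-differentiation argument is morally the same as the paper's first-order-condition computation: the paper writes $a_\eta$ as the minimiser of $x\mapsto\alpha H(x,\infty)-H(x,\eta)$ and differentiates to obtain $\alpha h_X(\eta)=\partial_xH(\eta,\eta)$, which immediately gives $F_{Y\mid X=\eta}(\eta)=\alpha$; your averaging-and-limiting reformulation leads to the same identity but, as you correctly flag, the diagonal nature of the limit needs care in either formulation.

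The genuine methodological difference is in the forward implication. You verify by hand that $A_\eta=\{X\geq\eta\}$ lies in $M(\alpha,\eta)$ by sign-checking the integrand on each region; this is valid and does not need Assumption~\ref{assump:joint_distribution}. The paper instead dispatches this direction in one line: since $q_\alpha(Y\mid X)=X$ means $X$ minimises $\E\,\QS_\alpha(G,Y)$ over all $\sigma(X)$-measurable $G$, and every $\LL(X)$-measurable $G$ is $\sigma(X)$-measurable by Lemma~\ref{lem:sigma_algebras_generated_by_generated_sigma_lattice}, $X$ a fortiori minimises over the smaller class, i.e.\ $q_\alpha(Y\mid\LL(X))=X$. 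Your route is self-contained at the level of paths; the paper's route is shorter and makes transparent that this direction is a pure ``restriction of the competitor class'' argument that holds regardless of Assumption~\ref{assump:joint_distribution}.
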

We are strongly convinced that Lemma \ref{lem:equivalence_auto_and_iso_calibration_for_random_variables} holds in full generality and not only under Assumption \ref{assump:joint_distribution}, and we conjecture that the claim may be extended to any identifiable functional. However, currently we are only able to show the claim in the stated special cases. With Lemma \ref{lem:cond_quantiles_define_valid_quantile_function} and Lemma \ref{lem:equivalence_auto_and_iso_calibration_for_random_variables} we are in position to show quantile calibration of ICL (Proposition \ref{prop:ICL_is_quantile_calibrated}) and the remaining part of Theorem \ref{thm:universality_of_ICL}.

\begin{proof}[Proof of Proposition \ref{prop:ICL_is_quantile_calibrated}]
We apply Fubini to the quantile score based representation of the CRPS 
	\begin{align*}
		\E\crps (G,Y) = \E \int_{0}^{1} \QS_\alpha (G^{-1}(\alpha),Y) \diff \alpha 
		= \int_{0}^{1} \E \QS_\alpha (G^{-1}(\alpha),Y)  \diff \alpha.
	\end{align*}
 By Proposition \ref{prop:conditional_functional_is_valid}, we know that the integrand can be minimized pointwise for each $\alpha \in (0,1)$ and by Lemma \ref{lem:cond_quantiles_define_valid_quantile_function}, it follows that the solutions may be chosen such that they define a random distribution $F\in \GG_\A$. By the uniqueness claim in Proposition \ref{prop:crps_characterization_of_ICL}, it follows that $P_{Y \mid \A}(\cdot, B)= F(\cdot, B)$ almost surely for all $B \in \BR$, and hence, for any $\alpha \in (0,1)$,
 \begin{align*}
     P_{Y \mid \A}^{-1}(\alpha)= F^{-1}(\alpha) = q_\alpha(Y \mid \A).
 \end{align*}
Using that $P_{Y \mid \A}^{-1}(\alpha)$ is $\A$-measurable, we see that
    \begin{equation*}
        P_{Y\mid \A}^{-1}(\alpha) = q_\alpha\big(Y \mid \LL(P_{Y\mid \A}^{-1}(\alpha))\big), \eqforall \alpha \in (0,1),
    \end{equation*}
since $P_{Y \mid \A}^{-1}(\alpha)$ minimizes $\E (\QS_\alpha (X,Y))$ over all $\A$-measurable random variables $X$, and hence, also over all $\LL(P_{Y \mid \A}^{-1}(\alpha))$-measurable random variables. By Lemma \ref{lem:equivalence_for_conditional_quantiles_given_generated_sigma_lattice}, it follows that \begin{equation*}
    P_{Y\mid \A}^{-1}(\alpha) = q_\alpha(Y \mid P_{Y\mid \A}^{-1}(\alpha)), \eqforall \alpha \in (0,1),
\end{equation*}
that is, $P_{Y \mid \A}$ is quantile calibrated. 
\end{proof}

\begin{proof}[Proof of the second part of Theorem \ref{thm:universality_of_ICL}]
In the proof of Proposition \ref{prop:ICL_is_quantile_calibrated}, it was shown that for any $\alpha \in (0,1)$, $P_{Y \mid \A}^{-1}(\alpha)$ is a version of $q_\alpha(Y \mid \A)$ and hence the second claim of the second part of Theorem \ref{thm:universality_of_ICL} follows directly by Fubini. Consider now any $\s$-finite measure $H$ on $(0,1) \times \R$. We may argue analogously as in Section \ref{Subsec:cond_expectations} by the disintegration theorem \citep[Theorem 1.23]{Kallenberg2017}, that there exists a $\s$-finite measure $P$ on $(0,1)$ and a kernel $K$ from $((0,1),\mathcal{B}(0,1))$ to $(\R, \BR)$ such that $M=P \otimes K$ and thus also the first claim of the second part follows by Fubini. 
\end{proof}

\begin{rem}
For any random variable $Y$ and $\s$-algebra $\A \subseteq \F$, \cite{Tomkins} calls a random variable $X$ a conditional median of $Y$ with respect to $\A$, if $X$ is $\A$-measurable and $\prob(Y>X \mid \A) \leq 1/2\leq \prob(Y \geq X \mid \A)$ almost surely. He shows existence of conditional medians by taking the lower-version of the median of $P_{Y \mid \A}$, the regular conditional distribution of $Y$ given $\A$. Clearly, this construction is not limited to the median and may be extended to any $\alpha$-quantile. However, the proof uses linearity of conditional expectations and is hence not applicable to ICL, since conditional expectations given $\s$-lattices are in general not linear. For this reason, we proposed an alternative approach for conditional quantiles given $\s$-lattices based on identification functions and scoring functions, where we have shown in the proof of Proposition \ref{prop:ICL_is_quantile_calibrated} that also for $\s$-lattices, the conditional quantiles coincide with the quantiles of ICL. The two approaches coincide in the case of $\A$ being a $\s$-algebra as it is shown e.g.\ in \cite{Armerin}. Whether this is also true in the case of $\s$-lattices is currently an open question. 
\end{rem}

\section*{Acknowledgements} 
The authors thank Lutz D\"umbgen, Tilmann Gneiting, Alexander Henzi, Alexander Jordan, Ilya Molchanov, and Eva-Maria Walz for valuable discussions and inputs. Financial support from the Swiss National Science Foundation is gratefully acknowledged.
  
\bibliographystyle{plainnat}
\bibliography{biblio}
\appendix

\section{Proof of Theorem \ref{thm:existence_and_uniqueness_ICL}}\label{appendix:proof_of_existence_and_uniqueness_of_ICL}

\begin{proof}[Sketch of proof for Theorem \ref{thm:existence_and_uniqueness_ICL} ]
 Consider the family of finite unions of half-open intervals with rational endpoints	$\RR= \big\{\bigcup_{i=1}^n(a_i,b_i] \mid n \in \mathbb{N}, \hspace{0.1cm} a_i,b_i \in \mathbb{Q}, a_i < b_i \hspace{0.1cm} \textrm{for }1\leq i \leq n\big\}$, which is a ring and a countable generator of $\BR$. For any $q \in \Q$, we let  $P( (q,\infty) \mid \A)$ be a version of $\prob( Y > q \mid \A)$. 
 Using the properties of conditional expectations given $\s$-lattices, we first may construct a nullset $N_0 \in \F$ such that the mapping 
		\begin{equation*}
			\Q \rightarrow [0,1], \quad q \mapsto P( (q,\infty) \mid \A) (\omega)
		\end{equation*}
		is decreasing and nonnegative for all $\omega \in \Omega \setminus N_0$. 
		For $a,b \in \Q$ with $a<b$, we define 
		\begin{equation*}
	\tilde{P}((a,b])= P( (a,\infty) \mid \A) - P( (b,\infty) \mid \A).
		\end{equation*} 
		We then extend $\tilde{P}$ to any $A \in \RR$, where we have to show that $\tilde{P}(A)$ does not depend on the representation of $A$ (of disjoint half-open intervals), and write $P(\omega, A)$ for $P(A)(\omega)$ for $\omega \in \Omega$. Consider a further nullset $N_1$ with $\tilde{P}(\omega,\R)=1$ for all $\omega \in N_1^c$. Then $N=N_0 \cup N_1$ is a nullset such that, for all $\omega \in N^c$, it holds that $B \mapsto \tilde{P}(\omega,B)$ is a content on $\RR$ with $\tilde{P}(\omega,\R)=1$. 
  
		In a next step, we have to show that $B \mapsto \tilde{P}(\omega,B)$ is $\s$-additive for all $\omega \in N^c$, i.e.\ $\tilde{P}$ is almost surely a pre-measure on $\RR$. This can be achieved using the equivalence that a content is $\s$-additive if and only if it is continuous \cite[Theorem 3.2.]{Bauer_MI}. 
  
		By the extension theorem of Carathéodory, it follows that for $\omega \in N^c$ the pre-measure $B \mapsto \tilde{P}(\omega,B)$ may be uniquely extended to a probability measure $B \mapsto Q(\omega,B)$ on $\s(\RR)=\BR$. If $N \neq \emptyset$, we let $Q(\omega, B)= \delta_{\omega_N}(B)$ for $\omega \in N$ and some fixed $\omega_N \in N$. Then, $Q$ satisfies the first property of a Markov kernel, i.e.\ $B \mapsto Q(\omega, B)$ is a probability measure for any $\omega \in \Omega$. 
  
		For any $B \in \RR$, the map $\omega \mapsto Q(\omega,B)$ is by construction a linear combination of conditional probabilities given $\A$, and hence $\BR$-measurable. To show that $\omega \mapsto Q(\omega,B)$ is $\BR$-measurable for any $B \in \BR$, we define
		\begin{equation*}
			\mathfrak{D}=\{B \in \BR \mid \omega \mapsto Q(\omega, B) \textrm{ is $\BR$-measurable}\}
		\end{equation*}
		and show that $\mathfrak{D}$ forms Dynkin system. Since $\mathfrak{R}$ is a ring, it follows that $\s(\mathfrak{R})=\mathcal{D}(\mathfrak{R})$, where $\mathcal{D}(\mathfrak{R})$ denotes the Dynkin system generated by $\mathfrak{R}$. Since $\mathfrak{R} \subseteq \mathfrak{D}$, we may conclude
		\begin{equation*}
	\BR=\s(\mathfrak{R})=\mathcal{D}(\mathfrak{R}) \subseteq \mathfrak{D} \subseteq \BR,
		\end{equation*}
		 that is, $\mathfrak{D}=\BR$ and $Q$ is indeed a Markov kernel. Moreover, it follows by construction that $\omega \mapsto Q(\omega,(y,\infty))$ is a version of $\prob( Y >y \mid \A)$ for any $y \in \R$ and hence $Q$ satisfies all requirements of Theorem \ref{thm:existence_and_uniqueness_ICL}. 
   
   Finally, assume that $P$ and $Q$ are two Markov kernels from $(\Omega, \F)$ into $(\R, \BR)$ such that $\omega \mapsto P(\omega, (y, \infty))$ and $\omega \mapsto Q(\omega, (y, \infty))$ are versions of $\prob( Y > y \mid \A)$ for all $y \in \R$. Then it follows that $ P( \cdot , A)= Q( \cdot , A)$ almost surely for any $A \in \RR$, so for any $A \in \RR$ there exists a nullset $N_A$ such that $ P( \omega , A) = Q( \omega , A)$ for all $\omega \in N_A^c$. The countable union $N= \bigcup_{A \in \RR}N_A$ is a nullset such that $P( \omega , A)=Q
   ( \omega , A)$ for all $\omega \in N^c$ and for all $A \in \RR$. It follows by \citet[Theorem 5.4.]{Bauer_MI} that the equality holds even for all $A \in \BR$, and hence, the conditional law of $Y$ given $\A$ is uniquely defined as specified in the theorem.
	\end{proof}

\section{Ordered metric spaces}\label{appendix:ordered_metric_spaces}
Consider an ordered metric space $(\XX,d,\preceq)$. For $x \in \XX$, we define the \emph{upper closure of $x$} as $\uparrow x = \{y \in \XX \mid x \preceq y\}$ and the \emph{lower closure of $x$} as $\downarrow x = \{y \in \XX \mid y \preceq x\}$. 
Following \cite{Richmond}, we call $(\XX, d, \preceq)$ \emph{$T_1$-ordered} if $\uparrow x$ and $\downarrow x$ are closed for any $x \in \XX$. Equivalently, an ordered metric space is $T_1$-ordered if and only if the following continuity assumption is satisfied: If $x_n \preceq y$ ($y \preceq x_n$) for some sequence $(x_n)_{n \in \N}\subseteq \XX$ with $x_n \to x$ as $n\to \infty$, it holds that $x \preceq y$ ($y \preceq x$). We call a set $A\subseteq \XX$ \emph{convex} if $a,c \in A$ and $a \preceq b \preceq c$ imply $b \in A$ and say that an ordered metric space $(\XX, d, \preceq)$ has a \emph{convex topology} if the induced topology $\Tau$ has a basis of convex sets. To assume that $(\XX, d, \preceq)$ has a convex topology is a weaker assumption than to say that $\XX$ is a locally convex vector space, since we do not assume $\XX$ to be a vector space. Clearly, for any $a,b \in \XX$ with $a \preceq b$ the interval $[a,b]=  \uparrow a \cap \downarrow b = \{x \in \XX \mid a \preceq x \preceq b\}$ is convex. Recall that $\XX$ is called \emph{separable} if there exists a dense countable subset $M\subseteq \XX$, i.e.\ there exists a countable $M\subseteq \XX$ with $\overline{M}=\XX$, where for any $A\subseteq \XX$, we denote by $\overline{A}$ the closure of $A$. For a sequence $(x_n)_{n\in \N}\subseteq \XX$ and $x\in \XX$, we write $x_n \uparrow x$ ($x_n \downarrow x$) if $x_n \to x$ as $n\to \infty$ and $x_n \preceq x_{n+1}$ ($x_n \succeq x_{n+1}$) for all $n\in \N$. We have to impose the following assumption on $(\XX, d, \preceq)$ in order to show Lemma \ref{lem:sigma_algebras_generated_by_generated_sigma_lattice}. 
  
 \begin{assump}\label{assump:ordered_metric_space} The ordered metric space $(\XX, d, \preceq)$ is separable, $T_1$-ordered, possesses a convex topology and for each $x\in \XX$, there exists sequences $(x_n)_{n\in \N}, (x'_n)_{n\in \N}\subseteq \XX\setminus \{x\}$ with $x_n \uparrow x$ and $x'_n \downarrow x$.
 \end{assump}

The following lemma is used to show Lemma \ref{lem:sigma_algebras_generated_by_generated_sigma_lattice}.
\begin{lem}\label{lem_appendix:generator_of_Borel_sigma_algebra_in_PR} Consider an ordered metric space $(\XX,d, \preceq)$ that satisfies Assumption \ref{assump:ordered_metric_space}.
Then, the Borel $\s$-algebra on $\XX$ is generated by the family of intervals
\begin{equation*}
\EE = \big\{[x,y] \mid x,y \in \XX, x \preceq y\},
\end{equation*}
and by the family of all measurable upper sets, that is
\begin{equation*}
\BX= \s(\EE)= \s(\BX \cap \U).
\end{equation*}
\end{lem}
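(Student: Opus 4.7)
The plan is to establish the chain of inclusions
\[
\s(\EE) \;\subseteq\; \s(\BX \cap \U) \;\subseteq\; \BX \;\subseteq\; \s(\EE),
\]
from which both announced equalities follow. Two of these inclusions are straightforward. By the $T_1$-ordered hypothesis, both $\uparrow x$ and $\downarrow y$ are closed, so every interval $[x,y]=\uparrow x\cap\downarrow y$ is closed and hence Borel, giving $\EE\subseteq\BX$. Moreover, for any $y\in\XX$ the set $\{z:z\not\preceq y\}$ is open (as the complement of $\downarrow y$) and is an upper set (if $z\not\preceq y$ and $z\preceq w$, then $w\preceq y$ would force $z\preceq y$, a contradiction), hence it lies in $\BX\cap\U$; passing to complements, $\downarrow y\in\s(\BX\cap\U)$, and combined with $\uparrow x\in\BX\cap\U$ we obtain $[x,y]\in\s(\BX\cap\U)$. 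This yields $\s(\EE)\subseteq\s(\BX\cap\U)\subseteq\BX$.

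The substantive task is the remaining inclusion $\BX\subseteq\s(\EE)$. Since $(\XX,d)$ is a separable metric space, it is second countable; combined with the convex topology assumption, I would extract a countable basis $(V_n)_{n\in\N}$ consisting of order-convex open sets, so that it suffices to show each such $V_n\in\s(\EE)$. Fixing an order-convex open set $V$ and a point $v\in V$, the sequence assumption provides $a_k\uparrow v$ and $b_k\downarrow v$ with $a_k,b_k\neq v$; since $a_k,b_k\to v$ in $d$ and $V$ is open, $a_k,b_k\in V$ for $k$ large enough, and convexity of $V$ together with $a_k\preceq v\preceq b_k$ forces $[a_k,b_k]\subseteq V$ with $v\in[a_k,b_k]$. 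Consequently $V$ can be written as a (still uncountable) union of intervals from $\EE$, each contained in $V$.

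The main obstacle is to turn this covering into a countable one. My plan is to introduce a countable dense subset $M\subseteq\XX$ and to work with the countable family
\[
\mathcal{F}_V=\big\{[m,m']\mid m,m'\in M,\ m\preceq m',\ [m,m']\subseteq V\big\},
\]
and to prove $V=\bigcup\mathcal{F}_V$. The key step is, given $v\in V$, to produce $m,m'\in M$ with $m\preceq v\preceq m'$ and $[m,m']\subseteq V$; combining the order-squeezing sequences $a_k\uparrow v$ and $b_k\downarrow v$ with metric density of $M$ in appropriately chosen neighborhoods of these approximants should yield such $m,m'$. This interaction between metric approximation and order approximation is precisely what Assumption \ref{assump:ordered_metric_space} is designed to sustain, and it is the technical heart of the argument. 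Once the countable covering is achieved, $V\in\s(\EE)$, which completes the chain and yields the claimed equalities $\BX=\s(\EE)=\s(\BX\cap\U)$.
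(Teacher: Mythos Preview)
Your chain of inclusions and the two easy steps are correct and match the paper. The genuine gap is exactly where you place it: producing $m,m'\in M$ with $m\preceq v\preceq m'$ and $[m,m']\subseteq V$. This cannot be carried out from the stated assumptions. Metric density of $M$ gives you points of $M$ arbitrarily $d$-close to $a_k$ or to $b_k$, but $d$-closeness carries no order information whatsoever; nothing in Assumption~\ref{assump:ordered_metric_space} forces the dense set $M$ to contain any point order-comparable to a given $v\notin M$. For instance, if the partial order is such that every set $\downarrow v$ has empty interior, a generic countable dense set will miss $\downarrow v\setminus\{v\}$ entirely, and your family $\mathcal F_V$ then fails to cover $V$. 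The sentence ``this interaction \ldots\ is precisely what Assumption~\ref{assump:ordered_metric_space} is designed to sustain'' is not borne out by the assumption as written.

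The paper sidesteps this by never requiring interval endpoints to lie in $M$. For each $x$ in an open set $U$ it first picks a convex neighbourhood $C_x\subseteq U$ and then uses the sequences $x'_n\uparrow x$, $\tilde x_n\downarrow x$ from the assumption to find $N_x$ with $x'_{N_x},\tilde x_{N_x}\in C_x$; order-convexity of $C_x$ yields $x\in[x'_{N_x},\tilde x_{N_x}]\subseteq C_x\subseteq U$, producing the (still uncountable) cover $U=\bigcup_{x\in U}[x'_{N_x},\tilde x_{N_x}]$ by elements of $\EE$. Countability is then obtained \emph{after the fact} by invoking separability of $(\XX,d)$ to extract a countable subfamily that still covers $U$, rather than by constraining endpoints to a fixed countable set in advance. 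So the repair of your argument is to keep the intervals $[a_k,b_k]$ you already build around each $v$, drop the attempt to move their endpoints into $M$, and instead appeal to separability to pass to a countable subcover.
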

\begin{proof}
For $\epsilon > 0$ and a set $B \subseteq \XX$ consider its $\epsilon$-envelope 
\begin{equation*}
B^\epsilon = \big\{x \in \XX \mid d(x,B) < \epsilon\big\} \subseteq \XX,
\end{equation*}
which is an open set which contains $B$ and where the distance of any element $x \in \XX$ to the set $B$ is defined as $d(x,B)= \underset{y \in B}{\inf} \hspace{0.1cm}d(x,y)$.
Consider $(\epsilon_n)_{n \in \N} \subseteq \R$ with $\epsilon_n \downarrow 0$. Since $\XX$ is $T_1$-ordered, we have for any $x, y \in \XX$ with $x \preceq y$
\[
[x,y] = \bigcap_{n \in \N} [x,y]^{\epsilon_n} \in \s(\OO) = \BX.
\]
Conversely, consider $U \subseteq \XX$ open and pick for any $x \in U$ a convex neighbourhood $C_x$ that is contained in $U$, which exists by Assumption \ref{assump:ordered_metric_space}. For any $x \in U$, there exist sequences $(x'_n)_{n \in \N},(\tilde{x}_n)_{n \in \N} \subseteq \XX \setminus \{x\}$ such that $x'_n \uparrow x$ and $\tilde{x}_n \downarrow x$. Hence for any $x \in U$ there exists $N_x \geq 0$ such that $x'_n \in C_x$ and $\tilde{x}_n \in C_x$ for all $n \geq N_x$. Hence, $[x_{N_x}',\tilde{x}_{N_x}]$ is a convex neighbourhood of $x$ that is contained in $\s(\EE)$ according to the considerations above. We have $U = \bigcup_{x \in U} [x'_{N_x},\tilde{x}_{N_x}]$ and by separability, we may pick a countable sequence $(x_n)_{n \in \mathbb{N}} \subseteq U$ such that $U =\bigcup_{n \in \N} [x'_{N_{x_n}},\tilde{x}_{N_{x_n}}] \in \s(\EE)$. Therefore, $\BX= \s(\EE)$. 

Clearly, $[x,y]\in \sigma(\BX \cap \U)$ for any $x, y \in \XX$ with $x \preceq y$, since it is the intersection of a closed upper set and the complement of an open upper set. Hence, $\BX=\s(\EE)\subseteq \sigma(\BX \cap \U)$. The other inclusion holds trivially. 
\end{proof}

It is easy to verify that $\R^p$ with the componentwise order and the Euclidean metric and $\PR$ with the stochastic order and the weak topology satisfy Assumption $\ref{assump:ordered_metric_space}$, where we refer to \cite{Hong} or \citet[Chapter 3]{Bogachev2018} for the fact that the weak topology on $\PR$ is convex. 

Statistical depth functions measure centrality in Euclidean space with respect to a given (empirical) distribution by assigning each point a value in $[0,1]$, see e.g.\ \cite{Statistical_depth}. 
Hence, statistical depth functions may be used to identify points in some general multivariate space as elements in some bounded interval, which clearly satisfies Assumption $\ref{assump:ordered_metric_space}$.

\section{Proofs of Section \ref{Sec:Factorization_and_connection_to_classical_conditional_laws}}
\begin{proof}[Proof of Lemma \ref{lem:sigma_algebras_generated_by_generated_sigma_lattice}] It suffices to note that for any generator $\EE \subseteq 2^\XX$ of $\BX$, we have that $\s(\xi)=\s(\EE')$ for $\EE'= \{\xi^{-1}(B) \mid B \in \EE\}$ and to recall that $\BX= \s(\BX \cap \U)$ as shown in Lemma \ref{lem_appendix:generator_of_Borel_sigma_algebra_in_PR}. 

For the second claim, let $Z$ be a $\LL(\xi)$-measurable random variable, that is $\LL(Z) \subseteq \LL(\xi)$. Then $\s(Z)= \s(\LL(Z)) \subseteq \s(\LL(\xi))=\s(\xi)$ and hence $Z$ is $\s(\xi)$-measurable as well.\end{proof}

\begin{proof}[Proof of Proposition \ref{prop:factorization_for_random_elements}] 
Sufficiency: Assume that there exists an increasing Borel measurable function $f: \XX \to \R$ such that $Z=f(\xi)$. Then, for any $a\in \R$,
\begin{align*}
\{Z>a\}=\{f(\xi)>a\}=\{\xi \in f^{-1}\big((a,\infty)\big)\} = \xi^{-1} (f^{-1}((a,\infty))) \in \LL(\xi),
\end{align*}
where we used in the last step that $f^{-1}((a,\infty)) \in \BX \cap \U$ for any $a \in \R$. \\
Necessity: We proceed by measure theoretic induction with respect to $Z$ assuming that $Z$ is $\LL(\xi)$-measurable. Let $Z=\one_L$ for $L \in \LL(\xi)$. That is, there exists $B \in \BX \cap \U$ such that $L=\xi^{-1}(B)$. Then, $Z=\one_L=\one_{\xi^{-1}(B)}= \one_{B}(\xi)$, 
and hence, we found $f= \one_{B}$ which is an increasing and Borel measurable function since $B\in \BX \cap \U$. The argument is analogous for a simple nonnegative $Z$. So let $Z \geq 0$ and $\LL(\xi)$-measurable. By \citet[Lemma 1.6]{Masterthesis}, there exists a sequence of simple nonnegative $\LL(\xi)$-measurable functions $(Z_n)_{n\in \N}$ such that $Z_n \uparrow Z$. For any $n \in \N$, there exists an increasing and Borel measurable function $f_n$ such that $Z_n= f_n(\xi)$. We define $f$ to be the pointwise limit of $(f_n)_{n \in \N}$, which is increasing and Borel measurable as well by \citet[Lemma 1.2]{Masterthesis}, and hence,
\begin{equation*}
Z= \lim\limits_{n \to \infty}Z_n=\lim\limits_{n \to \infty}f_n(\xi)=f(\xi).
\end{equation*}
To conclude let $Z$ be an arbitrary random variable that is $\LL(\xi)$-measurable. We divide $Z$ into $Z_+$ and $Z_-$. By \citet[Corollary 1.4]{Masterthesis} $Z_+$ is non-negative and $\LL(\xi)$-measurable, and $Z_-$ is nonnegative and $\overline{\LL(\xi)}$-measurable, where $\overline{\LL(\xi)}$ is the $\s$-lattice that contains all complements of elements in $\LL(\xi)$. We may proceed analogously as above to show that for a non-negative and $\overline{\LL(\xi)}$-measurable random variable $\tilde{Z}$ there exists a decreasing and Borel measurable function $g$ such that $g(\xi)=\tilde{Z}$. Hence, there exists an increasing and Borel measurable function $f_+$ such that $Z_+=f_+(\xi)$ and a decreasing and Borel measurable function $f_-$ such that $Z_-=f_-(\xi)$. The function $f=f_+-f_-$ is increasing and Borel measurable with $Z= Z_+-Z_-=f_+(\xi)-f_-(\xi)=f(\xi)$.
\end{proof}

\begin{proof}[Proof of Theorem \ref{thm:connection_to_classical_cond_laws}] Let $A \subseteq \XX$, denote the support of $P_\xi$.
For any $y\in \Q$, consider $g_y: \XX \to \R$ as given before the theorem. Then $P_{Y \mid \xi=x}((y,\infty))=g_y(x)$ for all $x \in A$. For $y \in \R\setminus \Q$, choose $(y_n)_{n \in \N}\subseteq \Q$ with $y_n \uparrow y$ and let $g_y=\lim_{n\to \infty}g_{y_n}$. Then, $P_{Y \mid \xi=x}((y,\infty))=g_y(x)$ for all $y \in \R$ and all $x \in A$. Assume that $P_{Y \mid \xi}=P_{Y \mid \LL(\xi)}$, then, for any $y\in \R$,
\begin{equation*}
\prob(Y > y \mid \xi)= \prob(Y > y \mid \LL(\xi))= g_y(\xi) \quad \textrm{almost surely,}
\end{equation*}
and hence, $g_y$ is increasing for any $y\in \R$ by Proposition \ref{prop:factorization_for_random_elements}. So for any $y\in \R$ and for any $x,x' \in A$ with $x \preceq x'$, we have $P_{Y \mid \xi= x}((y,\infty))= g_y(x)\leq g_y(x') = P_{Y \mid \xi= x'}((y,\infty))$. 

For the reverse direction, let $y \in \R$ be fixed and consider the $\s(\xi)$-measurable random variable $P_{Y \mid \xi}(\cdot, (y, \infty)) = g_y(\xi)$. By assumption, $g_y$ is increasing on $A$. 
Hence, $P_{Y \mid \xi}(\cdot, (y, \infty))$ is $\LL(\xi)$-measurable by Proposition \ref{prop:factorization_for_random_elements}. We have $\E([\one_{\{Y >y\}}-P_{Y \mid \xi}(\cdot, (y, \infty))]\one_A)= 0$ for any $A \in \LL(\xi)$ since $\LL(\xi)\subseteq \s(\xi)$. Moreover $\E([\one_{\{Y >y\}}-P_{Y \mid \xi}(\cdot, (y, \infty))]\one_B)= 0$ for any $B\in \s(P_{Y \mid \xi}(\cdot, (y, \infty)))$ and hence $P_{Y \mid \xi}(\cdot, (y, \infty))=\prob(Y > y \mid \LL(\xi))$ for any $y \in \R$. 
\end{proof}

\section{Probabilistic calibration of ICL}\label{appendix:PIT_calibration}

Let $Y$ be a random variable and $F$ a random distribution. Suppose that the distribution $F$ is almost surely absolutely continuous and we consider it as a probabilistic forecast for $Y$. It is called \emph{probabilistically calibrated} for $Y$ if the probability integral transform $F(Y)$ is following a standard uniform distribution \citep{GneitingRanjan2013}. If the predictive distribution is not absolutely continuous, it is called \emph{probabilistically calibrated} if, with a suitable randomization at jump points of $F$, the probability integral transform is standard uniform. The following result shows that this is the case for ICL, and is due to Alexander Henzi. The interested reader may like to compare the proposition to \citet[Theorem 2.16]{Tilmann_Johannes_Calibration}.

Adapting the definition of lower-quantile calibration given at \eqref{defn_quantile_calibration}, we call $F$ \emph{upper-quantile calibrated} if \begin{equation*}
    F_u^{-1}(\alpha)= \Tilde{q}_\alpha\big(Y \mid F_u^{-1}(\alpha)\big) \quad \textrm{almost surely for all }\alpha \in (0,1),
\end{equation*}
where $F_u^{-1}$ denotes the upper-quantile function $F_u^{-1}(\alpha)=\sup\{z \in \R \mid F(z)\leq \alpha\}$ for $\alpha\in (0,1)$, and $\Tilde{q}_\alpha(Y \mid F_u^{-1}(\alpha))$ denotes the upper $\alpha$-quantile of the conditional law of $Y$ given $F_u^{-1}(\alpha)$. One might show that ICL is upper-quantile calibrated exactly analogously as in Section \ref{Subsec:cond_quantiles} by choosing the sequence of minimizing sets given before Lemma \ref{lem:cond_quantiles_define_valid_quantile_function}.

\begin{prop}
    Let $Y$ be a random variable and $F$ a random predictive distribution for $Y$, which is lower- and upper-quantile calibrated. Then,
    \begin{equation*}
        \prob\big(F(Y)< \alpha\big)\leq \alpha \leq \prob\big(F(Y-)\leq \alpha\big), \eqforall \alpha \in (0,1).
    \end{equation*}
\end{prop}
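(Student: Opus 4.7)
The plan is to obtain each of the two inequalities from the corresponding calibration hypothesis in two routine steps: first, translate the calibration equation into a quantile-style inequality for $Y$ and the appropriate inverse of $F$; then translate that inequality back to one on the probability integral transform via the standard duality between a right-continuous cdf and its generalised inverses.

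First, I would recall the bracket inequalities for $\alpha$-quantiles: any lower or upper $\alpha$-quantile $q$ of a distribution $P$ satisfies $P((-\infty,q)) \leq \alpha \leq P((-\infty,q])$, and the analogous conditional bracket holds almost surely for the lower (respectively upper) $\alpha$-quantile of the conditional law of $Y$ given a $\s$-algebra, as already quoted in the excerpt for the lower case. Applying this conditionally with $q = F^{-1}(\alpha) = q_\alpha(Y \mid F^{-1}(\alpha))$ (lower-quantile calibration) and taking expectations yields $\prob(Y < F^{-1}(\alpha)) \leq \alpha$. Applying it with $q = F_u^{-1}(\alpha) = \Tilde{q}_\alpha(Y \mid F_u^{-1}(\alpha))$ (upper-quantile calibration) and taking expectations yields $\alpha \leq \prob(Y \leq F_u^{-1}(\alpha))$.

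Second, I would pass from these quantile inequalities back to $F(Y)$ and $F(Y-)$ using that $F$ is almost surely a right-continuous cdf. On the left, the classical equivalence $F(y) \geq \alpha \iff y \geq F^{-1}(\alpha)$ gives the almost sure event identity $\{F(Y) < \alpha\} = \{Y < F^{-1}(\alpha)\}$, hence $\prob(F(Y) < \alpha) \leq \alpha$. On the right, the definition $F_u^{-1}(\alpha) = \sup\{z \in \R \mid F(z) \leq \alpha\}$ together with monotonicity of $F$ shows $F(z) \leq \alpha$ for every $z < F_u^{-1}(\alpha)$, and therefore $F(F_u^{-1}(\alpha)-) \leq \alpha$; monotonicity of the left-continuous version of $F$ then gives the almost sure inclusion $\{Y \leq F_u^{-1}(\alpha)\} \subseteq \{F(Y-) \leq \alpha\}$, so $\alpha \leq \prob(Y \leq F_u^{-1}(\alpha)) \leq \prob(F(Y-) \leq \alpha)$.

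I do not anticipate any real obstacle here: the argument is essentially bookkeeping, and the only subtlety is that $F$ is random, so all pointwise identities and inclusions involving $F$ must be read on the almost sure event that $F$ is a right-continuous cdf in order for the above event manipulations to be valid almost surely and for the expectation step to go through.
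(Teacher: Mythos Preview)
Your proposal is correct and follows essentially the same route as the paper: use the bracket inequalities for conditional quantiles implied by calibration, take expectations, and then translate back via the duality $\{F(Y)<\alpha\}=\{Y<F^{-1}(\alpha)\}$ and its upper-quantile analogue. The only cosmetic difference is that the paper uses the exact event identity $\{F(Y-)\le\alpha\}=\{Y\le F_u^{-1}(\alpha)\}$, whereas you use the one-sided inclusion $\{Y\le F_u^{-1}(\alpha)\}\subseteq\{F(Y-)\le\alpha\}$; either suffices for the stated inequality.
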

\begin{proof}
    Fix $\alpha \in (0,1)$. Since $F$ is lower-quantile calibrated and $F(Y)<\alpha$ if and only if $Y<F^{-1}(\alpha)$, we have
    \begin{align*}
        \prob\big(F(Y)< \alpha\big)&= \prob\big(Y<F^{-1}(\alpha)\big) =\prob\big(\prob\big(Y<F^{-1}(\alpha) \mid F^{-1}(\alpha) \big)\big) \leq \alpha.
    \end{align*}
Analogously by upper-quantile calibration of $F$ and since $F(Y-)\leq \alpha$ if and only if $Y \leq F_{u}^{-1}(\alpha)$,
 \begin{align*}
        \prob\big(F(Y-)\leq \alpha\big)&= \prob\big(Y \leq F_{u}^{-1}(\alpha)\big) =\prob\big(\prob\big(Y \leq F_{u}^{-1}(\alpha) \mid F_u^{-1}(\alpha) \big)\big) \geq \alpha.
    \end{align*}
\end{proof}

\section{Proofs of Sections \ref{Sec:Universality_of_ICL} and \ref{Sec:Calibration}}
 \begin{proof}[Proof of Proposition \ref{prop:crps_characterization_of_ICL}]
For any Markov kernel $G\in \GG_\A$, we may derive by Fubini that
	\begin{align*}
		\E \crps(G,Y) = \int\E (G(z)-\one_{\{Y\leq z\}})^2 \diff z 
		= \int\E \big(\one_{\{ z< Y\}}-G(\cdot, (z,\infty))\big)^2 \diff z.
	\end{align*}
The claim follows, since $P_{Y \mid \A}(\cdot, (z,\infty))$ minimizes the integrand amongst all $\A$-measurable random variables for any $z\in \R$, and uniqueness follows by Theorem \ref{thm:existence_and_uniqueness_ICL}.
\end{proof}

\begin{proof}[Proof of Proposition \ref{prop:ICL_is_threshold_calibrated}]
Fix $y \in \R$. Since $P_{Y \mid \A}(\cdot, (y, \infty))=\prob(Y > y \mid \A)$, we have
\begin{equation}\label{eq1_in_proof:ICL_is_threshold_calibrated}
		\E(P_{Y \mid \A}(\cdot, (y, \infty)) \one_B) = \E(\one_{\{Y > y\}} \one_B)
\end{equation}
for any $B \in \s(P_{Y \mid \A}(\cdot, (y, \infty)))$. Since $P_{Y \mid \A}(y) = 1 - P_{Y \mid \A}(\cdot, (y, \infty))$, equation \eqref{eq1_in_proof:ICL_is_threshold_calibrated} holds as well for any $B \in \s (P_{Y \mid \A}(y))$. It follows by elementary calculations that $\E(P_{Y \mid \A}(y) \one_B) = \E(\one_{\{Y \leq y\}} \one_B)$ for any $B \in \s (P_{Y \mid \A}(y))$ and hence $P_{Y \mid \A}(y)=\prob(Y\leq y \mid P_{Y \mid \A}(y))$, that is, the ICL $P_{Y \mid \A}$ is threshold-calibrated. 
\end{proof}

\begin{proof}[Proof of Proposition \ref{prop:implications_calibration}]
Assume firstly that $F$ is auto-calibrated and consider any $a\in \R$. We want to show that $F(\cdot, (a,\infty))$ is a version of $\prob(Y>a\mid \LL(F))$. Note that $F(\cdot,(a,\infty))$ is $\LL(F)$-measurable, since it is the composition of $F$ with $f_a:\PR \to [0,1], G \mapsto \bar{G}(a)$, where $f_a$ is Borel measurable 
and clearly increasing. By assumption $F(\cdot, (a,\infty))=\prob(Y>a \mid F)$ and hence \begin{equation}\label{eq1_proof_of_proposition_AC_implies_IC}
    \E(\one_{\{Y>a\}}\one_B)= \E(F(\cdot, (a, \infty))\one_B) \eqforall B \in \s(F).
\end{equation}
Analogously, it may be seen that $F(\cdot, (a,\infty))$ is $\s(F)$-measurable, or equivalently $\s(F(\cdot, (a,\infty)))\subseteq \s(F)$. Moreover $\LL(F)\subseteq \s(F)$ by Lemma \ref{lem:sigma_algebras_generated_by_generated_sigma_lattice}, and hence, may conclude by \eqref{eq1_proof_of_proposition_AC_implies_IC} that $F(\cdot, (a,\infty))=\prob(Y>a\mid \LL(F))$. 

Assume now that $F$ is isotonically calibrated. Then, $\prob (Y \leq y \mid P_{Y \mid \LL(F)}(y))=  P_{Y \mid \LL(F)}(y)$ for all $y\in \R$ by Proposition \ref{prop:ICL_is_threshold_calibrated}, and since the forecast $F$ is isotonically calibrated, we may conclude that $\prob (Y \leq y \mid F(y))= F(y)$ for all $y\in \R$.

Analogously, $P_{Y \mid \LL(F)}^{-1}(\alpha)=q_\alpha(Y \mid  P_{Y \mid \LL(F)}^{-1}(\alpha))$ for all $\alpha \in (0,1)$ by Proposition \ref{prop:ICL_is_quantile_calibrated}, and hence, we may conclude that $F^{-1}(\alpha)=q_\alpha(Y \mid F^{-1}(\alpha))$ for all $\alpha \in (0,1)$ since the forecast $F$ is isotonically calibrated.
\end{proof}

\begin{proof}[Proof of Lemma \ref{lem:equivalence_auto_and_iso_calibration_for_random_variables}]
 Assume that $\E(Y \mid X)= X$, that is, $\E((Y-X) \one_A) = 0$ for all $A\in\s(X)$. It follows directly by definition that $\E(Y\mid \LL(X))=X$ using the fact that $\LL(X)\subseteq \s(X)$. If $\E(Y \mid \LL(X))= X$, then $\E((Y-X) \one_A) = 0$ for all $A\in\s(X)$ by \eqref{eq:defn_cond_expectations2}, and hence, $\E(Y \mid X)=X$. 
\end{proof}

\section{Proofs of Section \ref{Sec:Conditional_Functionals}}
\begin{proof}[Proof of Lemma \ref{lem:M_eta_is_non_empty}] By definition of the set-theoretic limit inferior, 
   \begin{equation*}
       A_0= \liminf_{n\to \infty} A_n = \bigcup_{n\in \N} \bigcap_{m\geq n}A_m , 
   \end{equation*}
   is contained in $\A$, since $\A$ is a $\s$-lattice. Minimizing equation \eqref{eq:def_s_A} over all $A \in \A$ is equivalent to minimize 
    \begin{equation*}
   \E \big[\one_A V(\eta,Y)+|V(\eta,Y)|\big]
 \end{equation*}
 where the integrand is nonnegative, and hence, we can use Fatou's Lemma to show
 \begin{equation*}
      \inf_{A \in \LL}s_A(\eta) \leq s_{A_0}(\eta) \leq \liminf_{n \to \infty} s_{A_n}(\eta) = \inf_{A \in \LL} s_A(\eta).
 \end{equation*}
\end{proof} 

We need the following lemma for several of the results in Section \ref{Sec:Conditional_Functionals}.
\begin{lem} \label{lem:minimizing_path_is_decreasing}
Let $\eta, \eta' \in \R$, $\eta < \eta'$, and $A_\eta \in M(\eta)$, $A_{\eta'} \in M(\eta')$. Then, $A_\eta \cup A_{\eta'} \in M(\eta)$ and $A_{\eta} \cap A_{\eta'} \in M(\eta')$.
\end{lem}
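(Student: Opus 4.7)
The plan is to decompose the relevant sets into disjoint pieces so that $s_A$ becomes additive, then to exploit the monotonicity of $V(\cdot,y)$ in its first argument to squeeze the ``overlap term'' to zero. First, I would note that since $\A$ is a $\s$-lattice, both $A_\eta \cup A_{\eta'}$ and $A_\eta \cap A_{\eta'}$ lie in $\A$, so the minimality inequalities for $A_\eta$ and $A_{\eta'}$ are available against these candidates.

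Next, I would write $A_\eta \cup A_{\eta'} = A_\eta \sqcup (A_{\eta'}\setminus A_\eta)$ and $A_{\eta'} = (A_\eta \cap A_{\eta'}) \sqcup (A_{\eta'}\setminus A_\eta)$, which gives the two additive identities
\[
s_{A_\eta\cup A_{\eta'}}(\eta) = s_{A_\eta}(\eta) + \E\bigl(\one_{A_{\eta'}\setminus A_\eta}\, V(\eta,Y)\bigr), \qquad s_{A_{\eta'}}(\eta') = s_{A_\eta\cap A_{\eta'}}(\eta') + \E\bigl(\one_{A_{\eta'}\setminus A_\eta}\, V(\eta',Y)\bigr).
\]
Since $A_\eta\in M(\eta)$, the first display forces $\E(\one_{A_{\eta'}\setminus A_\eta} V(\eta,Y))\geq 0$; since $A_{\eta'}\in M(\eta')$, the second forces $\E(\one_{A_{\eta'}\setminus A_\eta} V(\eta',Y))\leq 0$.

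Finally, I would invoke the defining property of an identification function: $V(\cdot,y)$ is increasing, so $V(\eta,y)\leq V(\eta',y)$ pointwise because $\eta<\eta'$. Integrating against $\one_{A_{\eta'}\setminus A_\eta}\diff\prob$ sandwiches the two expectations
\[
0 \leq \E\bigl(\one_{A_{\eta'}\setminus A_\eta} V(\eta,Y)\bigr) \leq \E\bigl(\one_{A_{\eta'}\setminus A_\eta} V(\eta',Y)\bigr) \leq 0,
\]
so both equal zero. Substituting back into the two decompositions yields $s_{A_\eta\cup A_{\eta'}}(\eta) = s_{A_\eta}(\eta)$ and $s_{A_\eta\cap A_{\eta'}}(\eta') = s_{A_{\eta'}}(\eta')$, which is exactly the claim that $A_\eta\cup A_{\eta'}\in M(\eta)$ and $A_\eta\cap A_{\eta'}\in M(\eta')$. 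There is no genuine obstacle here; the only subtlety is remembering that the monotonicity of $V$ is precisely what couples the two minimality inequalities, and that $\A$ being closed under unions and intersections is what makes the candidate sets admissible.
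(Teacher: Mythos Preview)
Your proof is correct and follows essentially the same approach as the paper: decompose via the common piece $A_{\eta'}\setminus A_\eta$, use the two minimality inequalities to get $v_{A_{\eta'}\setminus A_\eta}(\eta)\ge 0$ and $v_{A_{\eta'}\setminus A_\eta}(\eta')\le 0$, and then invoke monotonicity of $V$ in its first argument to force both to zero. The paper additionally remarks that $v_{A_{\eta'}\setminus A_\eta}(\eta'')=0$ for all $\eta''\in[\eta,\eta']$, but this is not needed for the lemma itself.
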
   
\begin{proof}[Proof of Lemma \ref{lem:minimizing_path_is_decreasing}]
 We have $s_{A_{\eta}}(\eta) \leq s_{A_{\eta} \cup A_{\eta'}}(\eta)$ and $s_{A_{\eta'}}(\eta') \leq s_{A_{\eta} \cap A_{\eta'}}(\eta')$. Since $A_{\eta'} \setminus (A_{\eta} \cap A_{\eta'})= A_{\eta'} \setminus A_{\eta}=(A_{\eta} \cup A_{\eta'}) \setminus A_{\eta}$, we have $v_{A_{\eta'} \setminus A_{\eta}}(\eta) \geq 0$ and $v_{A_{\eta'} \setminus A_{\eta}}(\eta') \leq 0$, which implies $v_{A_{\eta'}\setminus A_{\eta}}(\eta'')=0$ for all $\eta'' \in [\eta,\eta']$ due to the monotonicity in the first argument of the identification function. That implies $s_{A_{\eta}}(\eta)= s_{A_{\eta} \cup A_{\eta'}}(\eta)$ and $s_{A_{\eta'}}(\eta')= s_{A_{\eta} \cap A_{\eta'}}(\eta')$.
\end{proof}

\begin{proof}[Proof of Proposition \ref{prop:minimizing_path_is_decreasing}] For any $D \subseteq \R$, we call a path $A_D=(A_{D,\eta})_{\eta \in D}$ decreasing if
\begin{align*}
    A_{D,\eta'} \subseteq A_{D,\eta}, \quad \textrm{for }\eta,\eta'\in D \textrm{ with }\eta \leq \eta'.
\end{align*}
We define a partial order on all such decreasing paths on subsets of $\R$ by defining
\begin{equation*}
    A_D \preceq A'_{D'} \textrm{ if }D \subseteq D' \textrm{ and }A_{D,\eta} = A'_{D',\eta} \textrm{ for all }\eta \in D. 
\end{equation*}
Consider the partially ordered set $\Lambda$, which consists of all decreasing paths $A_D$ with $D \subseteq \R$ and $A_{D, \eta}\in M(\eta)$ for all $\eta\in D$. Given a chain in $\Lambda$, an upper bound in $\Lambda$ can be constructed by considering the function that is defined on the union of all the domains $D$. Therefore, Zorn's Lemma applies and yields that $\Lambda$ contains a maximal element $A_m$. It remains to show that the domain of $A_m$ is in fact $\R$. 

As a first step, we note that the domain $D_m$ of $A_m$ is a closed subset of $\R$, since otherwise, $A_m$ would not be maximal: For a contradiction assume that there exists $\eta_0 \in \R$ that lies on the boundary of $D_m$ but $\eta_0\notin D_m$. Assume w.l.o.g.\ that we may choose a sequence $(\eta_n)_{n\in \N}\subseteq D_m$ with $\eta_n \uparrow \eta_0$ (the case $\eta_n \downarrow \eta_0$ is analogous). We claim that
\begin{align*}
    A_{\eta_0}=\bigcap_{n\in \N} A_{m,\eta_n} \in M(\eta_0).
\end{align*}
Since $B_n \downarrow A_{\eta_0}$ as $n\to \infty$ for $B_n = \bigcap_{i=1}^n A_{m, \eta_i}$, we have that $\one_{B_n} V(\eta_n,Y)$ converges pointwise to $\one_{A_{\eta_0}}V(\eta_0,Y)$. By Lemma \ref{lem:minimizing_path_is_decreasing}, it follows that $s_{B_n}(\eta_n) \leq s_{B}(\eta_n)$ for all $B \in \A$ and all $n\in \N$ and by dominated convergence, it follows that $s_{B_n}(\eta_n) \to s_{A_{\eta_0}}(\eta_0)$ and $s_{B}(\eta_n) \to s_B(\eta_0)$ as $n\to \infty$ for all $B\in \A$. Thus we may conclude that $s_{A_{\eta_0}}(\eta_0)\leq s_{B}(\eta_0)$ for all $B \in \A$. Since $A_{\eta_0}\subseteq A_\eta $ for any $\eta<\eta_0$ and $A_{\eta}\subseteq A_{\eta_0} $ for any $\eta_0<\eta$, we could extend $A_m$ to the domain $D_m \cup \{\eta_0\}$ to obtain a greater element $\Tilde{A}_m$, which is a contradiction to $A_m$ being a maximal element. Hence, $D_m$ is closed. Suppose now that there exists a nonempty open interval $(\eta,\eta')$ contained in $D_m^c$ such that $\eta,\eta' \in D_m$. Consider $\eta_0\in (\eta,\eta')$ and any $A_{\eta_0} \in M(\eta_0)$. By Lemma \ref{lem:minimizing_path_is_decreasing}, it follows that
\begin{equation*}
    (A_{\eta_0} \cap A_{m, \eta}) \cup A_{m, \eta'} \in M(\eta_0).
\end{equation*}
So again, we could extend $A_m$ onto the domain $D_m \cup \{\eta_0\}$ to obtain a greater element $\Tilde{A}_m$, which is a contradiction to the fact that $A_m$ is a maximal element. We conclude that $D_m=\R$. 
\end{proof}

\begin{proof}[Proof of Proposition \ref{prop:conditional_functional_is_valid} ] Let $X=T(Y \mid \A)$ and $a \in \R$, then
    \begin{align*}
        \{X > a\} &= \{\omega \in \Omega \mid \sup\{\eta \in \Q: \omega \in A_\eta\}  > a\} \\
        &= \{\omega \in \Omega \mid \exists \eta \in \Q: \eta > a \textrm{ and }\omega \in A_\eta\} = \bigcup_{\eta \in \Q: \hsp \eta > a} A_\eta \in \A. 
    \end{align*}
For the second part of the first claim, it suffices by construction to show that $\prob(A_\eta \triangle \{X \geq \eta\})=0$ for all $\eta \in \R$. First note that $A_\eta \subseteq \{X \geq \eta\}$ for any $\eta \in \R$, since $\omega \in A_\eta$ implies $X(\omega)=\sup\{\eta \in \R \mid \omega \in A_\eta\} \geq \eta$. On the other hand consider $(\eta_n)_{n\in \N}\subseteq \Q$ with $\eta_n \uparrow \eta$ and $\eta_n<\eta$. Then $\prob(X>\eta_n) \downarrow \prob(X\geq \eta)$ as $n \to \infty$ by continuity of measures. Hence,
\begin{align*}
 \prob (\{X>\eta_n\} \setminus A_\eta) =   \prob \Big(\bigcup_{\substack{q \in \Q:q > \eta_n }} A_q  \setminus A_\eta\Big) 
=  \prob \Big(\bigcup_{q \in \Q:q > \eta_n} A_q  \Big) - \prob(A_\eta)\to 0, \quad\text{as }n \to \infty,
    \end{align*}
and we may conclude that $\prob(A_\eta \triangle \{X \geq \eta\})=\prob(\{X\geq\eta\} \setminus A_\eta)=0.$

If $S_\eta(x,y) \ge 0$ then the expectation at \eqref{eq:minimizing_criterion_conditional_functionals} always exists. The claim now follows by Fubini.
\end{proof}
\begin{proof}[Proof of Lemma \ref{lem:A_eta_is_automatically_decreasing_for_strictly_increasing_identification_functions}]
    By Lemma \ref{lem:minimizing_path_is_decreasing}, $s_{A_\eta}(\eta)= s_{A_\eta \cup A_{\eta'}}(\eta)$ and $s_{A_\eta'}(\eta') = s_{A_\eta \cap A_{\eta'}}(\eta')$, and hence,
\begin{equation*}
  \int_{A_{\eta'}\setminus A_\eta} V(\eta'', Y) \diff \prob =0 \eqforall\eta'' \in [\eta, \eta'].
\end{equation*}
By assumption, $V$ is strictly increasing in the first argument, so 
\begin{equation*}
  \int_{A_{\eta'}\setminus A_\eta} V(\eta', Y) -V(\eta, Y) \diff \prob =0,
\end{equation*}
and thus, $\prob(A_{\eta'} \setminus A_{\eta})=0$ since the integrand is strictly positive.

\end{proof}

For the proof of Proposition \ref{prop:conditional_functional_does_not_depend_on_path_if_the_identification_function_is_strictly_increasing}, we use the following lemma.
\begin{lem}\label{lem:auxilary_lem_to_show_uniqueness_of_cond_functional_for_strictly_increasing_identification_functions}
    For any minimizing path $(A_\eta)_{\eta \in \Q}$ and for almost all $\omega \in \Omega$ it holds that
    \begin{equation*}
 \sup\{\eta \in \Q \mid \omega \in A_\eta\}= \inf\{\eta \in \Q\mid \omega \notin A_\eta\}.
\end{equation*}
\end{lem}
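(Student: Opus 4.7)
The plan is to reduce the statement to a pointwise combinatorial fact about down-sets in $\Q$. Lemma \ref{lem:A_eta_is_automatically_decreasing_for_strictly_increasing_identification_functions} already tells us that for any two rationals $\eta < \eta'$, the set $A_{\eta'}\setminus A_\eta$ is a $\prob$-null set. Since $\Q\times\Q$ is countable, the countable union
\begin{equation*}
    N \;=\; \bigcup_{\substack{\eta,\eta'\in\Q\\ \eta<\eta'}} (A_{\eta'}\setminus A_\eta)
\end{equation*}
is again a null set, and for every $\omega\in\Omega\setminus N$ the path $(A_\eta)_{\eta\in\Q}$ is \emph{genuinely} (not just almost surely) decreasing: $\omega\in A_{\eta'}$ and $\eta\le\eta'$ with $\eta\in\Q$ force $\omega\in A_\eta$. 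So it suffices to establish the claimed equality on $\Omega\setminus N$.

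Fix such an $\omega$ and set $S(\omega)=\{\eta\in\Q\mid \omega\in A_\eta\}$, $a(\omega)=\sup S(\omega)$, $b(\omega)=\inf \bigl(\Q\setminus S(\omega)\bigr)$, with the usual conventions $\sup\emptyset=-\infty$, $\inf\emptyset=+\infty$. By the decreasing property, $S(\omega)$ is a down-set in $\Q$: if $\eta\in S(\omega)$ and $\eta'\in\Q$ satisfies $\eta'\le\eta$, then $\omega\in A_\eta\subseteq A_{\eta'}$. The boundary cases $S(\omega)=\emptyset$ or $S(\omega)=\Q$ are immediate; assume henceforth $S(\omega)$ is a proper non-empty down-set. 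First, one checks $a(\omega)\le b(\omega)$: any $\eta\in S(\omega)$, $\eta'\in\Q\setminus S(\omega)$ must satisfy $\eta\le\eta'$, since $\eta'<\eta$ would force $\eta'\in S(\omega)$ by the down-set property.

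The remaining task is to rule out the strict inequality $a(\omega)<b(\omega)$. If it held, by density of $\Q$ in $\R$ we could pick a rational $c$ with $a(\omega)<c<b(\omega)$. Then $c>a(\omega)=\sup S(\omega)$ yields $c\notin S(\omega)$, while $c<b(\omega)=\inf(\Q\setminus S(\omega))$ yields $c\notin\Q\setminus S(\omega)$, i.e.\ $c\in S(\omega)$, a contradiction. Hence $a(\omega)=b(\omega)$ for every $\omega\in\Omega\setminus N$, which is the desired statement. No step looks genuinely hard here; the only subtlety is to take the null-set union over pairs in the \emph{countable} index set $\Q$ before making the pointwise argument, which is why the lemma is stated for $\Q$ rather than $\R$.
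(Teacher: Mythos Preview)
Your proof is correct and follows essentially the same approach as the paper: both take the countable union $N=\bigcup_{\eta<\eta',\ \eta,\eta'\in\Q}(A_{\eta'}\setminus A_\eta)$, invoke Lemma~\ref{lem:A_eta_is_automatically_decreasing_for_strictly_increasing_identification_functions} to see that $N$ is null, and then argue pointwise on $\Omega\setminus N$ that the resulting down-set structure forces the sup and inf to agree. If anything, your treatment of the boundary cases $S(\omega)=\emptyset$ and $S(\omega)=\Q$ and of the density argument is slightly more explicit than the paper's.
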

\begin{proof}[Proof of Lemma \ref{lem:auxilary_lem_to_show_uniqueness_of_cond_functional_for_strictly_increasing_identification_functions}]
  For a minimizing path $(A_\eta)_{\eta \in \R}$ and $\omega \in \Omega$, let $A_\omega=\{\eta \in \Q \mid \omega \in A_\eta\}$ and $B_\omega=\{\eta \in \Q \mid \omega \notin A_\eta\}$. Clearly, $A_\omega \cap B_\omega = \emptyset$ and $A_\omega \cup B_\omega= \Q$, that is, $A_\omega$ and $B_\omega$ form a partition of $\Q$. This implies that $\sup (A_\omega)= \inf (B_\omega)$ if and only if both $A_\omega$ and $B_\omega$ are connected, that is, there exists $\eta_0 \in \Q$ such that $A_\omega=\{\eta \in \Q \mid \eta\leq \eta_0\}$ and $B_\omega=\{\eta \in \Q \mid \eta>\eta_0\}$.
   We note that $A_\omega$ (and hence as well $B_\omega$) is not connected if and only if there exists $\eta, \eta' \in \Q$ with $\eta < \eta'$ such that $\omega \in A_{\eta'}$ and $\omega \notin A_{\eta}$. Thus, we may derive that
   \begin{align*}
       \{ \omega \mid \sup\{\eta \in \Q \mid \omega \in A_\eta\}\neq \inf\{\eta \in \Q \mid \omega \notin A_\eta\}\}  
       =&\{ \omega  \mid A_\omega \textrm{ and }B_\omega \textrm{ are not connected}\}\\ 
       =& \bigcup_{\eta \in \Q} \bigcup_{\substack{\eta' \in \Q: \eta' > \eta}} (A_{\eta'} \setminus A_\eta) 
   \end{align*}
   is a nullset by Lemma \ref{lem:A_eta_is_automatically_decreasing_for_strictly_increasing_identification_functions}.
\end{proof}

\begin{proof}[Proof of Proposition \ref{prop:conditional_functional_does_not_depend_on_path_if_the_identification_function_is_strictly_increasing}]
     Consider two minimizing paths $(A_\eta)_{\eta \in \Q},(\Tilde{A}_\eta)_{\eta \in \Q} \subseteq \A$, i.e.\ $A_\eta \in M(\eta)$ and $\Tilde{A}_\eta \in M(\eta)$ for all $\eta \in \Q$, and let $X$ and $\tilde{X}$ be the conditional functionals defined at \eqref{eq:def_conditional_functional_with_Q} with respect to $(A_\eta)_{\eta \in \Q}$, $(\Tilde{A}_\eta)_{\eta \in \Q}$, respectively. By Lemma \ref{lem:auxilary_lem_to_show_uniqueness_of_cond_functional_for_strictly_increasing_identification_functions}, there are $Z, \Tilde{Z}\in \F$ with $\prob(Z^c)=\prob(\Tilde{Z}^c)=0$ and
    \begin{align*}
        X(\omega)&= \sup\{\eta \in \Q \mid \omega \in A_\eta\} = \inf\{\eta \in \Q \mid \omega \notin A_\eta\}, \quad \textrm{for all }\omega \in Z,\\
        \Tilde{X}(\omega)&= \sup\{\eta \in \Q \mid \omega \in \Tilde{A}_\eta\} = \inf\{\eta \in \Q \mid \omega \notin \Tilde{A}_\eta\}, \quad \textrm{for all }\omega \in \Tilde{Z}.
    \end{align*}
    For $\eta_0 \in \Q$, define the sequence $(\hat{A}^{\eta_0}_\eta)_{\eta \in \Q}\subseteq \A$ by
    \begin{equation}\label{eq:proof_of_uniqueness_of_conditional_functionals}
        \hat{A}^{\eta_0}_\eta = \begin{cases}
         A_\eta, & \textrm{for }\eta \leq \eta_0  \\
        \Tilde{A}_\eta, & \textrm{for }\eta > \eta_0 .
    \end{cases} 
    \end{equation}
    We can argue as in Lemma \ref{lem:A_eta_is_automatically_decreasing_for_strictly_increasing_identification_functions} to show that  $(\hat{A}^{\eta_0}_\eta)_{\eta \in \Q}$ is almost surely decreasing. Hence, it follows that
    \begin{equation*}
        N_{\eta_0}= \{ \omega \in \Omega \mid  \sup\{\eta \in \Q\mid \omega \in \hat{A}^{\eta_0}_\eta\}\neq \inf\{\eta \in \Q\mid \omega \notin \hat{A}^{\eta_0}_\eta\}\}
    \end{equation*}
    is a nullset. Let $N= \bigcup_{\eta \in \Q} N_{\eta}$ and consider $\omega \in Z \cap \Tilde{Z} \cap N^c$. Let $X(\omega)=\tau$ and $\Tilde{X}(\omega)=\Tilde{\tau}$.  Without loss of generality assume that $\tau \leq \Tilde{\tau}$ (the other case can be shown analogously with a modified sequence in \eqref{eq:proof_of_uniqueness_of_conditional_functionals}). If $\tau= \Tilde{\tau}$ there is nothing to show. Otherwise assume for a contradiction that $\tau < \Tilde{\tau}$ and choose $\tau' \in \Q$ such that $\tau < \tau' <\Tilde{\tau}$, then
    \[
        \tau = \sup\{\eta \in \Q\mid \omega \in A_\eta\} = \sup\{\eta \in \Q\mid \omega \in \hat{A}^{\tau'}_\eta\} 
        = \inf\{\eta \in \Q \mid \omega \notin \hat{A}^{\tau'}_\eta\} = \inf\{\eta \in \Q\mid \omega \notin \Tilde{A}_\eta\} = \Tilde{\tau},
    \]
    a contradiction and hence $X(\omega)=\Tilde{X}(\omega)$. Since $\omega \in Z \cap \Tilde{Z} \cap N^c$ was chosen arbitrarily and $\prob( Z \cap \Tilde{Z} \cap N^c)=1$, this concludes the proof.
\end{proof}

For the proof of Lemma \ref{lem:existence_of_valid_rectangular_array}, we need the following result.
\begin{lem}\label{lem:A_eta_order_with_alpha}
Let $0 < \alpha < \alpha' < 1$, $\eta \in \R$, $A_\eta \in M(\alpha, \eta)$ and $A'_\eta \in M(\alpha', \eta)$. Then $A_\eta \cap A'_\eta \in M(\alpha, \eta)$ and $A_\eta \cup A'_\eta \in M(\alpha',\eta)$.
\end{lem}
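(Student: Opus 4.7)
The plan is to mirror the proof of Lemma \ref{lem:minimizing_path_is_decreasing}, exploiting the key observation that the quantile identification functions at two different levels differ only by an additive constant: for any $\eta, y \in \R$,
\begin{equation*}
V^{\alpha'}(\eta, y) - V^{\alpha}(\eta, y) = (\one_{\{\eta > y\}} - \alpha') - (\one_{\{\eta > y\}} - \alpha) = \alpha - \alpha' < 0.
\end{equation*}
This makes the situation even more rigid than in Lemma \ref{lem:minimizing_path_is_decreasing}, where the two identification functions were evaluated at different thresholds.

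First, I would write $E = A_\eta \setminus A'_\eta$ and note that $A_\eta \cap A'_\eta = A_\eta \setminus E$ while $A_\eta \cup A'_\eta = A'_\eta \cup E$. The minimizer property of $A_\eta$ at level $\alpha$, compared with the competing set $A_\eta \cap A'_\eta \in \A$, yields
\begin{equation*}
\E\bigl(\one_{E}\, V^{\alpha}(\eta, Y)\bigr) \le 0,
\end{equation*}
and the minimizer property of $A'_\eta$ at level $\alpha'$, compared with the competing set $A_\eta \cup A'_\eta \in \A$, yields
\begin{equation*}
\E\bigl(\one_{E}\, V^{\alpha'}(\eta, Y)\bigr) \ge 0.
\end{equation*}

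Next I would substitute $V^{\alpha'}(\eta, Y) = V^{\alpha}(\eta, Y) + (\alpha - \alpha')$ into the second inequality, obtaining
\begin{equation*}
0 \le \E\bigl(\one_{E}\, V^{\alpha}(\eta, Y)\bigr) + (\alpha - \alpha')\, \prob(E).
\end{equation*}
Combined with the first inequality and the fact that $(\alpha - \alpha')\prob(E) \le 0$, this forces both
\begin{equation*}
\E\bigl(\one_{E}\, V^{\alpha}(\eta, Y)\bigr) = 0 \eqand (\alpha - \alpha')\, \prob(E) = 0,
\end{equation*}
so in particular $\E(\one_E V^{\alpha}(\eta, Y)) = \E(\one_E V^{\alpha'}(\eta, Y)) = 0$.

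Finally I would translate these equalities into the two statements of the lemma. Writing
\begin{equation*}
\E\bigl(\one_{A_\eta \cap A'_\eta}\, V^{\alpha}(\eta, Y)\bigr) = \E\bigl(\one_{A_\eta}\, V^{\alpha}(\eta, Y)\bigr) - \E\bigl(\one_{E}\, V^{\alpha}(\eta, Y)\bigr) = \E\bigl(\one_{A_\eta}\, V^{\alpha}(\eta, Y)\bigr),
\end{equation*}
shows $A_\eta \cap A'_\eta \in M(\alpha, \eta)$, and the analogous decomposition
\begin{equation*}
\E\bigl(\one_{A_\eta \cup A'_\eta}\, V^{\alpha'}(\eta, Y)\bigr) = \E\bigl(\one_{A'_\eta}\, V^{\alpha'}(\eta, Y)\bigr) + \E\bigl(\one_{E}\, V^{\alpha'}(\eta, Y)\bigr) = \E\bigl(\one_{A'_\eta}\, V^{\alpha'}(\eta, Y)\bigr)
\end{equation*}
shows $A_\eta \cup A'_\eta \in M(\alpha', \eta)$. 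The argument is essentially routine once the translation identity between $V^{\alpha}$ and $V^{\alpha'}$ is in hand; no delicate step is anticipated.
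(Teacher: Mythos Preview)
Your proof is correct and follows essentially the same route as the paper's: both derive $\E(\one_E V^\alpha(\eta,Y))\le 0\le \E(\one_E V^{\alpha'}(\eta,Y))$ for $E=A_\eta\setminus A'_\eta$ from the two minimality conditions, and then use that $V^{\alpha'}(\eta,y)\le V^{\alpha}(\eta,y)$ to force both integrals to vanish. Your explicit use of the translation identity $V^{\alpha'}=V^{\alpha}+(\alpha-\alpha')$ additionally yields $\prob(E)=0$, a slightly stronger byproduct than the paper states, but the argument is otherwise the same.
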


\begin{proof}
For $\alpha \in (0,1)$ and $\eta \in \R$, let $ s_A^\alpha(\eta) = \E(\one_A V^\alpha(\eta,Y))$ for $A\in \A$ and $v_A^\alpha(\eta) = \E(\one_A V^\alpha(\eta,Y))$ for $A\in \F$. Consider $0\leq \alpha < \alpha' \leq 1$, $\eta \in \R$, $A_\eta \in M(\alpha, \eta)$ and $A'_\eta \in M(\alpha', \eta)$. Note that $s_{A_\eta}^\alpha =  s_{A_\eta\cap A'_\eta}^\alpha + v_{A_\eta\setminus A_\eta'}^{\alpha}$ and $s_{A_\eta\cup A'_\eta}^{\alpha'} = s_{A'_\eta}^{\alpha'} +v_{A_\eta\setminus A_\eta'}^{\alpha'}$. Since $s^\alpha_{A_\eta}(\eta) \leq s^\alpha_B(\eta)$ and $s^{\alpha'}_{A'_{\eta}}(\eta) \leq s^{\alpha'}_B(\eta)$ for any $B \in \A$, it follows that
\begin{equation*}
    v_{A_\eta\setminus A_\eta'}^{\alpha} \leq 0 \leq v_{A_\eta\setminus A_\eta'}^{\alpha'}.
\end{equation*}
Since $ v_A^{\alpha'}(\eta) \leq  v_A^\alpha(\eta)$ for any $A\in \F$, we may conclude that $v_{A_\eta\setminus A_\eta'}^{\alpha}(\eta) = v_{A_\eta\setminus A_\eta'}^{\alpha'}(\eta)=0.$ 
\end{proof}

\begin{proof}[Proof of Lemma \ref{lem:existence_of_valid_rectangular_array}]
Let $(\alpha_n)_{n\in \N}$ be an enumeration of $\Q \cap (0,1)$. For each $n\in \N$, there exists a decreasing minimizing path $(\Tilde{A}^{\alpha_n}_\eta)_{\eta \in \R}$ by Lemma \ref{lem:minimizing_path_is_decreasing}. We construct inductively a rectangular array $(\hat{A}^{\alpha_n}_\eta)^{n\in \N}_{\eta \in \R}$, which is additionally monotonic in $\alpha$. W.l.o.g.\ assume that $\alpha_1 < \alpha_2$, then it follows by Lemma \ref{lem:A_eta_order_with_alpha} that the array
\begin{equation*}
    \hat{A}^{\alpha_1}_\eta = \Tilde{A}^{\alpha_1}_\eta, \quad \hat{A}^{\alpha_2}_\eta=\Tilde{A}^{\alpha_2}_\eta \cup \Tilde{A}^{\alpha_1}_\eta
\end{equation*}
is valid, decreasing in $\eta$ and increasing in $\alpha$, that is $\hat{A}^{\alpha_i}_\eta\in M(\alpha_i, \eta)$ for $i=1,2$, $\eta \in \R$ and $\hat{A}^{\alpha_1}_{\eta}\subseteq \hat{A}^{\alpha_2}_\eta$ for all $\eta \in \R$ and $\hat{A}^{\alpha_i}_{\eta'}\subseteq \hat{A}^{\alpha_i}_\eta$ for $i=1,2$ and all $\eta < \eta'.$ 

For $n\geq 3$, define $(\hat{A}^{\alpha_n}_\eta)_{\eta \in \R}$ as follows: If $\alpha_n < \gamma=\min\{\alpha_i \mid i < n\}$, let $
    \hat{A}_\eta^{\alpha_n} = \tilde{A}_\eta^{\alpha_n} \cap \hat{A}_\eta^\gamma$.
If $\alpha_n > \beta = \max\{\alpha_i \mid i < n\}$, let 
    $\hat{A}_\eta^{\alpha_n} = \tilde{A}_\eta^{\alpha_n} \cup \hat{A}_\eta^\beta$.
Otherwise, let $\beta=\max\{\alpha_i \mid i < n, \alpha_i < \alpha_n\}$, $\gamma=\min\{\alpha_i \mid i < n, \alpha_i > \alpha_n\}$ and define
\begin{equation*}
    \hat{A}^{\alpha_n}_\eta = \big(\hat{A}^{\beta}_\eta \cup \Tilde{A}^{\alpha_n}_\eta \big) \cap \hat{A}^{\gamma}_\eta.
\end{equation*}
It follows by Lemma \ref{lem:A_eta_order_with_alpha} that the resulting array $(\hat{A}^{\alpha_n}_\eta)^{n\in \N}_{\eta \in \R}$ is valid, decreasing in $\eta$ and increasing in $\alpha$. Finally, we define for $\alpha \in (0,1)$ and $\eta \in \R$,
    \begin{equation*}
        A_\eta^\alpha = \bigcup_{\substack{\beta \in (0,1)\cap \Q: \beta < \alpha}} \hat{A}_\eta^\beta.
    \end{equation*}
It follows directly that the array $(A_\eta^\alpha)^{\alpha \in (0,1)}_{\eta \in \R}$ is increasing in $\alpha$ and decreasing in $\eta$. So it remains to show that
    $A_\eta^\alpha \in M(\alpha, \eta)$ for all $\alpha \in (0,1)$ and all $\eta \in \R$. As in the proof of Lemma \ref{lem:A_eta_order_with_alpha}, let $ s_B^\alpha(\eta) = \E(\one_B V^\alpha(\eta,Y))$ for $B\in \A$. Fix $\alpha \in (0,1)$ and $\eta \in \R$, let $(\alpha_n)_{n \in \N}\subseteq(0,1)\cap \Q$ with $\alpha_n<\alpha$ and $\alpha_n \uparrow \alpha$ as $n\to \infty$ and
     \begin{equation*}
        A_n = \bigcup_{i=1, \dots, n} \hat{A}_\eta^{\alpha_i}.
    \end{equation*}
By Lemma \ref{lem:A_eta_order_with_alpha}, $A_n \in M(\alpha_n,\eta)$ for all $n\in \N$, and hence,  $s_{A_n}^{\alpha_n}(\eta) \leq  s_{B}^{\alpha_n}(\eta)$ for all $B \in \A$. Since $\one_{A_n} V^{\alpha_n}(\eta,Y) \to \one_{A_\eta^\alpha} V^{\alpha}(\eta,Y)$ pointwise, it follows by dominated convergence that $s^{\alpha_n}_{A_n}(\eta) \to s^{\alpha}_{A_\eta^\alpha}(\eta)$ and $s_B^{\alpha_n}(\eta) \to s_B^{\alpha}(\eta)$, and hence, $s^\alpha_{A_\eta^\alpha}(\eta) \leq s_B^\alpha(\eta)$ for all $B\in \A$.
\end{proof}

\begin{proof}[Proof of Lemma \ref{lem:cond_quantiles_define_valid_quantile_function}]
The claim follows from the particular construction of the rectangular path $(A_\eta^\alpha)$ in Lemma \ref{lem:existence_of_valid_rectangular_array}: We have to show that $F^{-1}$ is increasing and left-continuous. For $0< \alpha < \alpha'< 1$, it holds that $q_\alpha(Y \mid \A)(\omega)\leq q_{\alpha'}(Y \mid \A)(\omega)$ for any $\omega \in \Omega$, since $(A_\eta^\alpha) \subseteq (A_\eta^{\alpha'})$ for any $\eta \in \R$, and hence, $F^{-1}(\alpha)\leq F^{-1}(\alpha')$. Secondly, consider $(\alpha_n)_{n\in \N}\subseteq (0,1)$ such that $\alpha_n \uparrow \alpha \in (0,1)$ as $n\to \infty$. Then, for any $\omega \in \Omega$, by the left-continuity of the rectangular path 
\begin{align*}
    \lim_{n \to \infty}q_{\alpha_n}(Y \mid \A)(\omega)&= \lim_{n \to \infty}\sup \{\eta \in \R\mid \omega \in  A_\eta^{\alpha_n} \}
    =\lim_{n \to \infty} \sup \Big\{\eta \in \R \Mid \omega \in \bigcup_{\substack{\beta \in (0,1)\cap \Q: \beta < \alpha_n}} {A}_\eta^\beta\Big\} \\
    &= \sup \Big\{\eta \in \R \Mid \omega \in \bigcup_{\substack{\beta \in (0,1)\cap \Q: \beta < \alpha}} {A}_\eta^\beta\Big\} 
    = \sup \{\eta \in \R\mid \omega \in  A_\eta^{\alpha} \} = q_{\alpha}(Y \mid \A)(\omega).
\end{align*}
By the equivalent characterization of the class $\GG_\A$ given in Remark \ref{rem:class_GG_A}, it follows directly that the corresponding distribution $F$ is an element in $\GG_\A$.
\end{proof}

\begin{proof}[Proof of Lemma \ref{lem:equivalence_for_conditional_quantiles_given_generated_sigma_lattice}] Assume firstly that $q_\alpha(Y\mid X)=X$. That is $X$ minimizes $\E \QS_\alpha(G,Y)$ over all $\s(X)$-measurable random variables $G$. Since any $\LL(X)$-measurable random variable $G$ is $\s(X)$-measurable (Lemma \ref{lem:sigma_algebras_generated_by_generated_sigma_lattice}) it follows that $X$ minimizes $\E \QS_\alpha(G,Y)$ over all $\LL(X)$-measurable random variables $G$ and hence $q_\alpha(Y\mid \LL(X))=X$. 

For the other direction, recall that, by construction, $q_\alpha(Y\mid \LL(X))=\sup\{\eta \in \R \mid \omega \in A_\eta\}$ for some decreasing minimizing path $(A_\eta)_{\eta \in \R} \subseteq \LL(X)$. We have for all $\eta \in \R$ that
\begin{equation*}
    \{q_\alpha(Y \mid \LL(X)) > \eta\}= \bigcup_{\eta'>\eta} A_{\eta'}\eqand  \{q_\alpha(Y \mid \LL(X)) \geq \eta\}= \bigcap_{\eta'<\eta} A_{\eta'},
\end{equation*}
hence the assumption $q_\alpha(Y \mid \LL(X)) = X$ implies that 
\begin{equation}\label{eq:Ay_rep}
    \{X>\eta\}\subseteq A_\eta \subseteq \{X\geq \eta\}, \eqforall \eta \in \R. 
\end{equation}
Consider now the joint distribution of the random vector $(X,Y)$, $H(x,y)=\prob(X< x, Y < y)$, $x,y \in \R$,
and assume first that $H$ is absolutely continuous. That is, there exists $h:\R^2\to [0,\infty)$ with $H(x,y)=\int_{-\infty}^y \int_{-\infty}^x h(r,s) \diff r \diff s.$ The conditional distribution of $Y$ given $X=x$ equals
\begin{equation}\label{eq_cond_cdf}
    F_{Y \mid X=x}(y) = \frac{1}{h_X(x)} \int_{-\infty}^y h(x,s) \diff s = \frac{\partial_x H(x,y)}{h_X(x)}
\end{equation}
for each $x\in \R$ with positive marginal density $h_X(x)= \int h(x,y) \diff y$. By representation \eqref{eq:generated_sigma_algebra_for_random_variables} of $\LL(X)$, we know that there exists $(a_\eta)_{\eta \in \R} \subseteq \R$ such that $A_\eta= \{X \geq a_\eta\}$ or $A_\eta= \{X > a_\eta\}$ for all $\eta \in \R$. W.l.o.g.\ we let $A_\eta= \{X \geq a_\eta\}$ for all $\eta \in \R$ since $\prob(X=a_\eta)=0$ by assumption. For each $\eta \in \R$, the number $a_\eta$ minimizes
\begin{align*}
    \E\big(\one_{\{X\geq x\}}(\one_{\{Y<\eta\}}-\alpha)\big) &=  \E\big((1-\one_{\{X< x\}})(\one_{\{Y<\eta\}}-\alpha)\big) \\
    &= \prob(Y<\eta)-\alpha-\prob(X<x,Y<\eta)+\alpha \prob(X<x)
\end{align*}
over $x \in \R$ if and only if it minimizes $\alpha H(x,\infty)-H(x,\eta)$ over $x \in \R$, that is
\begin{equation}\label{proof_13}
    \big(\alpha \partial_x H(x,\infty) - \partial_x H(x,\eta)\big)\big|_{x=a_\eta}=0.
\end{equation}
The assumption $q_\alpha(Y\mid \LL(X))=X$ implies $a_\eta=\eta$ for all $\eta \in \R$ by \eqref{eq:Ay_rep}, Thus \eqref{proof_13} implies
\begin{equation}\label{proof_12}
    \alpha h_X(\eta) = \partial_x H(\eta,\eta)=0, \quad \textrm{for all }\eta\in \R, 
\end{equation}
and hence we may conclude by \eqref{eq_cond_cdf} and \eqref{proof_12}
\begin{align*}
    q_\alpha(Y \mid X=x) &= \inf\{y \in \R \mid {\partial_x H(x,y)}/{h_X(x)}\geq \alpha\}
    = \inf\{y \in \R \mid \partial_x H(x,y)\geq h_X(x)\alpha\}=x
\end{align*}
for all $x\in \R$ with $h_X(x)>0$, that is $q_\alpha(Y\mid X)=X$. 

Assume now that the marginal distribution of $X$ has countable support without accumulation points. That is, there exists $\XX=\{x_i \mid i \in I\}$ with a countable (or finite) index sets $I$ such that $\prob(X \in \XX)=1$, $\prob(X = x_i) > 0$ for all $i \in I$, the $x_i$ are pairwise different and they are all isolated points of the support. For any $i\in I$, the conditional distribution of $Y$ given $X=x_i$ equals
\begin{equation*}
    F_{Y \mid X=x_i}(y) = \prob(Y \leq y \mid X=x_i)= \frac{\prob(Y \le y, X = x_i)}{\prob(X = x_i)}.
\end{equation*}
Hence for any $i\in I$, 
\begin{align}\label{proof_cond_quantile}
    q_\alpha(Y \mid X=x_i) &= \inf\left\{y \in \R \mid F_{Y \mid X=x_i}(y)\geq \alpha\right\}\nonumber\\&= \inf\left\{y \in \R \mid  \prob(Y \le y, X = x_i)\geq \alpha \prob(X = x_i)\right\}\nonumber\\&= \inf\left\{y \in \R \mid  \prob(Y < y, X = x_i)\geq \alpha \prob(X = x_i)\right\}.
\end{align}

By representation \eqref{eq:generated_sigma_algebra_for_random_variables} of $\LL(X)$, we know that there exists $(a_\eta)_{\eta \in \R} \subseteq \R$ such that $A_\eta= \{X \geq a_\eta\}$ or $A_\eta= \{X > a_\eta\}$ for all $\eta \in \R$. Since the support of the distribution of $X$ consists of isolated points, we may assume w.l.o.g.\ that $A_\eta= \{X \geq a_\eta\}$ and $a_\eta \in \XX \cup \{-\infty, \infty \}$ for all $\eta \in \R$. The relation \eqref{eq:Ay_rep} implies that $\eta \mapsto a_\eta$ is piecewise constant on $\R \backslash \XX$ and if $(x_i,x_{i'}) \cap \XX = \emptyset$ for $x_i,x_{i'} \in \XX$ with $x_i < x_{i'}$, then $a_\eta = x_{i'}$ for all $\eta \in (x_i,x_{i'})$. For each $\eta \in \R$, the number $a_\eta$ minimizes
\begin{align*}
    \E\big(\one_{\{X\geq x\}}(\one_{\{Y<\eta\}}-\alpha)\big) &=  \E\big((1-\one_{\{X< x\}})(\one_{\{Y<\eta\}}-\alpha)\big) \\
    &= \prob(Y<\eta)-\alpha-\prob(X< x,Y<\eta)+\alpha \prob(X<x) 
\end{align*}
over $x \in \XX \cup \{-\infty, \infty \}$ if and only if it minimizes $\alpha \prob(X< x)-\prob(X<x,Y<\eta)$. That is, for each $\eta \in \R$, $a_\eta \in \XX \cup \{-\infty, \infty\}$ satisfies 
\begin{equation}\label{eq:10}
       \alpha \prob(X <  a_\eta)-\prob(X <  a_\eta,Y < \eta) \leq \alpha \prob(X < x_k)-\prob(X < x_k,Y < \eta) \eqforall k \in I.
\end{equation}

Let $x_i,x_{i'},x_{i''} \in \XX \cup \{-\infty,\infty\}$ with $x_{i''} < x_i < x_{i'}$ and $(x_{i''},x_i) \cap \XX = \emptyset$, $(x_i,x_{i'}) \cap \XX = \emptyset$. Condition \eqref{eq:10} for $\eta \in (x_i,x_{i'})$ and $k = i$ implies $\alpha \prob(X = x_i) \le \prob(X = x_i,Y < \eta)$, and for $\eta \in (x_{i''},x_i)$ and $x_k = x_{i'}$, it yields $\alpha \prob(X = x_i) \ge \prob(X = x_i, Y < \eta)$, hence, by \eqref{proof_cond_quantile}, we obtain $q_\alpha(Y \mid X=x_i) = x_i$.

\end{proof}

\end{document}